\def\NAT@def@citea{\def\@citea{\NAT@separator}}% Suppress spaces between citations using natbib.sty
\theoremstyle{plain}% Theorem-like structures provided by amsthm.sty
\newtheorem{theorem}{Theorem}[section]
\newtheorem{lemma}[theorem]{Lemma}
\newtheorem{definition}[theorem]{Definition}
\theoremstyle{remark}
\newtheorem{remark}[theorem]{Remark}
\newcommand{\be}{\begin{equation}}
\newcommand{\ee}{\end{equation}}
\newcommand{\bea}{\begin{eqnarray*}}
\newcommand{\eea}{\end{eqnarray*}}
\newcommand{\ba}{\begin{array}}
\newcommand{\ea}{\end{array}}
\newcommand{\ei}{\end{itemize}}
\newcommand{\bc}{\begin{center}}
\newcommand{\ec}{\end{center}}
\newcommand{\bfr}{\begin{flushright}}
\newcommand{\efr}{\end{flushright}}
\numberwithin{equation}{section}
 \numberwithin{equation}{section}
\newcommand{\comment}[1]{}
\begin{document}

\title{Potentials and transmission problems in weighted Sobolev spaces for anisotropic Stokes and Navier-Stokes systems with $L_{\infty}$ strongly elliptic coefficient tensor}

\author{
\name{Mirela Kohr\textsuperscript{a}\thanks{CONTACT Mirela Kohr. Email: mkohr@math.ubbcluj.ro} and Sergey E. Mikhailov\textsuperscript{b} and Wolfgang L. Wendland\textsuperscript{c}}
\affil{\textsuperscript{a}Faculty of Mathematics and Computer Science, Babe\c s-Bolyai University,
Cluj-Napoca, Romania;
\textsuperscript{b}Department of Mathematics, Brunel University London, Uxbridge,
UK;
\textsuperscript{c}Institut f\"ur Angewandte Analysis und Numerische Simulation, Universit\"at Stuttgart,
Stuttgart, Germany
}
}

\maketitle

\begin{abstract}
We obtain well-posedness results in $L_p$-based weighted Sobolev spaces for a transmission problem for anisotropic Stokes and Navier-Stokes systems with $L_{\infty}$ strongly elliptic coefficient tensor, in complementary Lipschitz domains of ${\mathbb R}^n$, $n\ge 3$. 
The strong ellipticity allows to explore the associated pseudostress setting.
First, we use a variational approach that reduces two linear transmission problems for the anisotropic Stokes system to equivalent mixed variational formulations with data in $L_p$-based weighted Sobolev and Besov spaces. We show that such a mixed variational formulation is well-posed in the space ${\mathcal H}^1_{p}({\mathbb R}^n)^n\times L_p({\mathbb R}^n)$, {$n\geq 3$}, for any $p$ in an open interval containing $2$.
These results are used to define the Newtonian and layer potential operators for the considered anisotropic Stokes system.
Various mapping properties of these operators are also obtained. 
The potentials are employed to show the well-posedness of some linear transmission problems, which then is combined with a fixed point theorem in order to show the well-posedness of the nonlinear transmission problem for the anisotropic Stokes and Navier-Stokes systems in $L_p$-based weighted Sobolev spaces, whenever the given data are small enough.
\end{abstract}

\begin{keywords}
Anisotropic Stokes and Navier-Stokes systems;  $L_{\infty }$ coefficients; pseudostress; mixed variational formulation; Newtonian and layer potentials; $L_p$-based weighted Sobolev and Besov spaces; transmission problems; well-posedness.
\end{keywords}

\begin{quote}
\textbf{AMS Subject Classification}:\\
%\subjclass[2000]
{Primary 35J25, 35Q35, 42B20, 46E35; Secondary 76D, 76M.}
\end{quote}

%{\small\em Dedicated to Professor M. Lanza de Cristoforis on the occasion of his $60$th birthday}

\section{Introduction}
\label{Intro}
\setcounter{equation}{0}
A powerful tool in the analysis of boundary value problems for partial differential equations is played by the layer potential methods.
Mitrea and Wright \cite{M-W} used them
to obtain well-posedness results for the main boundary value problems for the constant-coefficient Stokes system in
Lipschitz domains in ${\mathbb R}^n$  in Sobolev, Bessel potential, and Besov spaces (see also
\cite[Proposition 4.5]{B-H} for an unsteady exterior Stokes problem). The authors in \cite{K-L-M-W} obtained mapping properties of the constant-coefficient Stokes and Brinkman layer potential operators in standard and weighted Sobolev spaces by exploiting results of singular integral operators (see also \cite{K-L-W,K-L-W1}).

The methods of layer potential theory play also a significant role in the study of elliptic boundary problems with variable coefficients. Mitrea and Taylor \cite[Theorem 7.1]{M-T} used the technique of layer potentials to prove the well-posedness of the Dirichlet problem for the Stokes system in $L_p$-spaces on arbitrary Lipschitz domains in a compact Riemannian manifold. Dindo\u{s} and Mitrea
\cite[Theorems 5.1, 5.6, 7.1, 7.3]{D-M} used a boundary integral approach to show well-posedness results in Sobolev and Besov spaces for Poisson problems of Dirichlet type for the Stokes and Navier-Stokes systems with smooth coefficients in Lipschitz domains on compact Riemannian manifolds.
A layer potential analysis of pseudodifferential operators of Agmon-Douglis-Nirenberg type in Lipschitz domains on compact Riemannian manifolds has been developed in \cite{10-new2}. The authors in \cite{K-M-W} used a layer potential approach and a fixed point theorem to show well-posedness of transmission problems for the Navier-Stokes and Darcy-Forchheimer-Brinkman systems with smooth coefficients in Lipschitz domains on compact Riemannian manifolds.
Choi and Lee \cite{Choi-Lee} proved the well-posedness in Sobolev spaces for the Dirichlet problem for the Stokes system with non-smooth coefficients in a Lipschitz domain $\Omega \subset {\mathbb R}^n$ ($n\geq 3$) with a small Lipschitz constant when the coefficients have vanishing mean oscillations (VMO) with respect to all variables. 
Choi and Yang \cite{Choi-Yang} established existence and pointwise bound of the fundamental solution for the Stokes system with measurable coefficients in the space ${\mathbb R}^d$, $d\geq 3$, when the weak solutions of the system are locally H\"{o}lder continuous.

Alliot and Amrouche \cite{Al-Am} developed a variational approach to show the existence of weak solutions for the exterior Stokes problem in weighted Sobolev spaces (see also \cite{Amrouche-1,Gi-Se}).
The authors in \cite{K-W} developed a variational approach in order to analyze Stokes and Navier-Stokes systems with $L_\infty $ coefficients in Lipschitz domains on compact Riemannian manifolds (see also \cite{K-W1}).

An alternative integral approach, which reduces various boundary value problems for variable-coefficient elliptic partial differential equations to {\em boundary-domain integral equations} (BDIEs), by means of explicit parametrix-based integral potentials, was explored e.g., in \cite{CMN-1,CMN-2,Ch-Mi-Na-3, Mikh-18}.
Equivalence of BDIEs to the boundary problems and invertibility of BDIE operators in $L_2$ and $L_p$-based Sobolev spaces have been analyzed in these works.
Localized boundary-domain integral equations based on a harmonic parametrix for divergence-form elliptic PDEs with variable matrix coefficients have been {also developed, see \cite{CMN2017}
and the references therein.}

Brewster et al. in \cite{B-M-M-M} used a variational approach to show well-posedness results for Dirichlet, Neumann and mixed problems for higher order divergence-form elliptic equations with $L_{\infty }$ coefficients in locally $(\epsilon,\delta )$-domains and in Besov and Bessel potential spaces. Sayas and Selgas in \cite{Sa-Se} developed a variational approach for the constant-coefficient Stokes layer potentials, by using the technique of N\'{e}d\'{e}lec \cite{Ne}. 
B\u{a}cu\c{t}\u{a}, Hassell and Hsiao \cite{B-H} developed a variational approach for the constant-coefficient Brinkman single layer potential and analyzed the time-dependent exterior Stokes problem with Dirichlet condition in ${\mathbb R}^n$, $n\!=\!2,3$. Barton \cite{Barton} used the Lax-Milgram Lemma to construct layer potentials for {strongly} elliptic operators in general settings.

{Throughout this paper, we use the Einstein convention on summation in repeated indices from $1$ to $n$,} and the standard notation $\partial _\alpha $ for the first order partial derivative with respect to the variable ${x_\alpha }$, $\alpha =1,\ldots ,n$.
{Let $\check{\boldsymbol{\mathcal L}}$ be a second order differential operator in divergence form,}
\begin{equation}
\label{Stokes-0}
\begin{array}{lll}
{\check{\boldsymbol{\mathcal L}}{\bf u}:=\partial _\alpha\left(A^{\alpha \beta }\partial _\beta {\bf u}\right),}
\end{array}
\end{equation}
where ${\mathbb A}=\left\{A^{\alpha \beta }\right\}_{1\leq \alpha ,\beta \leq n}$ is the viscosity coefficient fourth order tensor,  and for fixed $\alpha$ and $\beta$ $A^{\alpha \beta }=A^{\alpha \beta }(x)$ are $n\times n$ matrix-valued functions on ${\mathbb R}^n$, such that
\begin{align}
\label{Stokes-1}
A^{\alpha \beta }=\left\{a_{ij}^{\alpha \beta }\right\}_{1\leq i,j\leq n},\ \
a_{ij}^{\alpha \beta }\in L_{\infty }({\mathbb R}^n),\ 1\leq i,j,\alpha ,\beta \leq n.
\end{align}
We will further shorten \eqref{Stokes-1} as $\mathbb A\in  L_{\infty }({\mathbb R}^n)^{n^4}$.
We assume that the {\em boundedness condition} $\big|a_{ij}^{\alpha \beta }(x)\big|\leq c_{\mathbb A}$ and the {\it strong ellipticity condition}
\begin{align}
\label{mu}
a_{ij}^{\alpha \beta }(x)\xi _{i\alpha }\xi _{j\beta }\geq c_{\mathbb A}^{-1}\xi _{i\alpha }\xi _{i\alpha}
=c_{\mathbb A}^{-1}|\boldsymbol \xi|^2\quad
\forall\, \boldsymbol\xi \!=\!(\xi _{i\alpha })_{1\leq i,\alpha \leq n}\!\in \! {\mathbb R}^{n\times n}
\end{align}
hold for almost any $x\in {\mathbb R}^n$, with a constant $c_{\mathbb A} >0$ (cf. \cite[(7.23)]{B-M-M-M}, \cite[(1.1)]{Choi-Dong-Kim-JMFM}).

Let ${\bf u}$ be an unknown vector field for velocity, $\pi $ be an unknown scalar field for pressure, and ${\bf f}$ be a given vector field for distributed forces, defined on an open set ${\mathfrak D}\subset {\mathbb R}^n$ with the compact boundary $\partial{\mathfrak D}$. Then the equations
\begin{equation}
\label{Stokes}
\begin{array}{lll}
\boldsymbol{\mathcal L}({\bf u},\pi ):=\partial_\alpha\left(A^{\alpha \beta }\partial_\beta {\bf u}\right)
-\nabla \pi={\bf f},\ {\rm{div}}\ {\bf u}=0 \mbox{ in } {\mathfrak D}
\end{array}
\end{equation}
determine the {\it Stokes system} with $L_{\infty}$ tensor viscosity coefficient.

Let $\lambda\in L_{\infty }({\mathbb R}^n)$. Then the nonlinear system
\begin{equation}
\label{Navier-Stokes}
\begin{array}{lll}
{\partial_\alpha\left(A^{\alpha \beta }\partial _\beta {\bf u}\right)-\lambda ({\bf u}\cdot \nabla ){\bf u}-\nabla \pi={\bf f},\ {\rm{div}}\, {\bf u}=0 \mbox{ in } {\mathfrak D},}
\end{array}
\end{equation}
is called the {{\it anisotropic Navier-Stokes system}} with $L_{\infty}$ viscosity tensor ${\mathbb A}=\left(A^{\alpha \beta }\right)_{1\leq \alpha ,\beta \leq n}$.

The systems \eqref{Stokes} and \eqref{Navier-Stokes} can describe flows of viscous incompressible fluids with {\it anisotropic viscosity tensor}, and the viscosity tensor ${\mathbb A}$ is related to the physical properties of such a fluid (see \cite{duffy,Choi-Dong-Kim-JMFM,Ni-Be}).
{\it Our goal is to treat transmission problems for the Stokes and Navier-Stokes systems \eqref{Stokes} and  \eqref{Navier-Stokes} in ${\mathbb R}^n\setminus\partial \Omega $, where $\partial \Omega $ is a Lipschitz boundary}. Then we have to add 
adequate conditions at infinity by setting our problems in weighted Sobolev spaces.
\begin{remark}
\label{example-1}
In the isotropic case
\begin{align}
\label{isotropic}
{\bar a_{ij}^{\alpha\beta}=\mu \left(\delta_{\alpha j}\delta _{\beta i}+\delta_{\alpha \beta }\delta _{ij}\right),\ 1\leq i,j, \alpha ,\beta \leq n}
\end{align}
(see \cite{duffy}), with $\mu\in L_{\infty }({\mathbb R}^n)$, we assume that there exists a constant $c_\mu >0$, such that
$c_\mu^{-1}\leq\mu \leq {c_\mu} \mbox{ a.e. in } {\mathbb R}^n.$
In such a case, the operator $\boldsymbol{\mathcal L}$ given by \eqref{Stokes} takes the form
\begin{align}
\label{Stokes-mu}
\boldsymbol{\mathcal L}({\bf u},\pi)={\rm{div}}\left(\mu \nabla {\bf u}\right )-\nabla \pi
\end{align}
if ${\rm{div}}\, {\bf u}=0$.
{The tensor $\bar a_{ij}^{\alpha\beta}$ given by \eqref{isotropic} satisfies the second (ellipticity) condition in \eqref{mu} only for symmetric matrices $\boldsymbol\xi$.}
On the other hand, for any ${\bf u}$ and $\pi$, $\boldsymbol{\mathcal L}({\bf u},\pi)$ given by \eqref{Stokes-mu}
can be also represented as
\begin{align}
\label{isotropic-1}
{\mathcal L}_i({\bf u},\pi)
=\partial_\alpha(a_{ij}^{\alpha \beta}\partial_\beta u_j)-\partial_i\pi,\quad
a_{ij}^{\alpha \beta}=\mu \delta_{\alpha \beta }\delta _{ij},\ 1\leq i,j, \alpha ,\beta \leq n,
\end{align}
where
${a_{ij}^{\alpha \beta }(x)\xi _{i\alpha }\xi _{j\beta }=\mu (x)\xi _{i\alpha }\xi _{i\alpha }\geq 2c_\mu ^{-1}|\boldsymbol \xi |^2,}$
for a.e. $x\in {\mathbb R}^n$ and for any $\boldsymbol\xi =(\xi _{i\alpha })_{1\leq i,\alpha \leq n}\in {\mathbb R}^{n\times n}$.
Hence the ellipticity condition \eqref{mu} is satisfied for any matrices, and
our analysis is also applicable to the {\it isotropic Stokes system}.
Note that $a_{ij}^{\alpha \beta}\partial_\beta u_j=\mu\partial_\alpha u_i$ can be associated with the viscous part of the pseudostress $\mu\partial_\alpha u_i-\delta_{\alpha i}\pi$, cf., e.g., \cite{Cai-Wang-2010}.
The approaches based on the pseudostress formulation have been intensively used in the study of viscous incompressible fluid flows due to their ability to avoid the symmetry condition that appears in the approaches based on the standard stress formulation (see, e.g., \cite{Cai-Wang-2010,Camano-Gatica2016}).
\end{remark}

\section{Preliminary results}\label{S2}

Let further on in the paper $\Omega _{+}:=\Omega $  be a bounded Lipschitz domain in ${\mathbb R}^n$ ($n\geq 3$) with connected boundary $\partial \Omega $. Let $\Omega _{-}:={\mathbb R}^n\setminus \overline{{\Omega }}_{+}$. Let $\mathring E_\pm$ denote the operator of extension by zero outside $\Omega_\pm$.

\subsection{Standard $L_p$-based Sobolev spaces and related results}
For $p\in (1,\infty )$, $L_p({\mathbb R}^n)$ denotes the Lebesgue space of (equivalence classes of) measurable, $p^{{\rm th}}$ integrable functions on ${\mathbb R}^n$, and $L_{\infty }({\mathbb R}^n)$ denotes space of (equivalence classes of) essentially bounded measurable functions on ${\mathbb R}^n$.
{For any $p\in (1,\infty )$, the conjugate exponent $p'$ is given by $\frac{1}{p}+\frac{1}{p'}=1$.}
Given a Banach space ${\mathcal X}$, its topological dual is denoted by ${\mathcal X}'$. The duality pairing of two dual spaces defined on a subset $X\subseteq {\mathbb R}^n$ is denoted by $\langle \cdot ,\cdot \rangle _X$.
Let $H^1_{p}({\mathbb R}^n)$ and $H^1_{p}({\mathbb R}^n)^n$ denote the standard $L_p$-based Sobolev (Bessel potential) spaces.

For any open set $\Omega '$ in $\mathbb R^n$, let 
${\mathcal D}(\Omega '):=C^{\infty }_{0}(\Omega ')$ denote the space of infinitely differentiable functions with compact support in $\Omega '$, equipped with the inductive limit topology. 
Let 
${\mathcal D}'(\Omega ')$ denote the corresponding space of distributions on $\Omega '$, i.e., the dual space of ${\mathcal D}(\Omega ')$. 
Let
$H^1_{p}({\Omega '}):=\{f\in {\mathcal D}'(\Omega '):\exists \, F\in H^1_{p}({\mathbb R}^n)
\mbox{ such that } F_{|{\Omega '}}=f\},$
where $_{|\Omega '}$ denotes the restriction operator onto $\Omega '$.
The space $\widetilde{H}^1_{p}({\Omega '})$ is the closure of ${\mathcal D}(\Omega ')$ in $H^1_{p}({\mathbb R}^n)$.
Also, $H^1_{p}({\Omega '})^n$ and $\widetilde{H}^1_{p}({\Omega '})^n$ are the spaces of vector-valued functions with components in $H^1_{p}({\Omega '})$ and $\widetilde{H}^1_{p}({\Omega '})$, respectively, {and similar extensions to the vector-valued functions or distributions are assumed to all other spaces introduced further}.
The Sobolev space $\widetilde{H}^1_{p}({\Omega '})$ can be identified with the closure $\mathring{H}^1_{p}(\Omega ')$ of ${\mathcal D}(\Omega ')$ in ${H}^1_{p}({\Omega '})$ (see, e.g., \cite{J-K1}, and \cite[Theorem 3.33]{Lean} for $p=2$). 
For $p\!\in \!(1,\infty)$ and $s\!\in \!(0,1)$, the boundary Besov space $B^s_{p,p}(\partial \Omega )$ can be defined by means of the method of real interpolation,
$B^s_{p,p}(\partial \Omega )=\left(L_p(\partial \Omega ),H^1_{p}(\partial \Omega )\right)_{s,p}$
(cf., e.g., \cite[Chapter 1, \& 1.3]{Triebel}, \cite[Section 11.1]{M-W}). 
The dual of $B^s_{p,p}(\partial \Omega )$ is the space $B^{-s}_{p',p'}(\partial \Omega )$. 
For $p\!=\!2$, we use the standard notation for the $L_2$-based Sobolev spaces
$
H^1(\Omega ')^n=H^1_{2}(\Omega '),\,
H^s(\partial \Omega )=H^s_{2}(\partial \Omega )^n=B_{2,2}^s(\partial \Omega ).
$
For further properties of standard Sobolev and Besov spaces we refer the reader to \cite{J-K1,Lean,M-W,Triebel}.

We often use the {following result} (see \cite{Co}, \cite[Lemma 2.6]{Mikh}, \cite[Theorem 2.5.2]{M-W}).
\begin{lemma}
\label{trace-operator1}
Let $\Omega _{+}$ be a bounded Lipschitz domain of ${\mathbb R}^n$  with connected boundary $\partial \Omega $,
and let $\Omega _{-}\!:=\!{\mathbb R}^n\setminus \overline{\Omega }$ be the corresponding
exterior domain.
If $p\!\in \!(1,\infty )$, then there exist a linear bounded trace operator $\gamma_{\pm }\!:\!H^s_{p}({\Omega }_{\pm })\!\to \!B^{s-\frac{1}{p}}_{p,p}({\partial\Omega })$ such that $\gamma_{\pm }f\!=\!f_{|{{\partial\Omega }}}$ for any $f\!\in \!C^{\infty }(\overline{\Omega }_{\pm })$.
The operator $\gamma _{\pm }$ is surjective and has a $($non-unique$)$ linear and bounded right inverse $\gamma ^{-1}_{\pm }\!:\!B^{1-\frac{1}{p}}_{p,p}({\partial\Omega })\!\to \!H^s_{p}({\Omega }_{\pm }).$ The trace operator $\gamma :{H}^1_{p}({\mathbb R}^n)\to B_{p,p}^{1-\frac{1}{p}}(\partial \Omega )$ is also well defined and bounded.
\end{lemma}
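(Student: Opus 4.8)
\emph{Strategy and reduction to a half-space.} The plan is to localize and flatten the boundary, settle the resulting half-space model by elementary Fourier analysis, and then globalize; the argument for $\Omega_+$ and $\Omega_-$ is the same, and I treat $s=1$ (the case used in the sequel, for which the right inverse indeed lands in $B^{1-1/p}_{p,p}(\partial\Omega)$), the range $1/p<s<1$ following by interpolation or the same computation with the Gagliardo norm. Since $\partial\Omega$ is compact and Lipschitz, cover it by finitely many coordinate cylinders $Z_1,\dots,Z_N$ in which, after a rigid motion, $\partial\Omega\cap Z_k$ is the graph of a Lipschitz function $\varphi_k$ and $\Omega_\pm\cap Z_k$ lies above (resp.\ below) that graph; choose $Z_0\Subset\Omega_\pm$ and a smooth partition of unity $\{\theta_k\}_{k=0}^N$ subordinate to $\{Z_k\}$ with $\sum_k\theta_k\equiv1$ near $\partial\Omega$. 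In $Z_k$ the shear $\Lambda_k(x',x_n)=(x',x_n-\varphi_k(x'))$ is bi-Lipschitz and straightens $\partial\Omega\cap Z_k$ into $\mathbb R^{n-1}\times\{0\}$. Because a bi-Lipschitz change of variables is an isomorphism of $H^1_p=W^1_p$ on the relevant domains (the chain rule holds a.e.\ and the Jacobians are bounded and bounded away from $0$), and because $B^{s'}_{p,p}(\partial\Omega)$ is defined intrinsically through exactly such charts, it suffices to prove in $\mathbb R^n_+$ that $f\mapsto f_{|x_n=0}$ extends to a bounded map $H^1_p(\mathbb R^n_+)\to B^{1-1/p}_{p,p}(\mathbb R^{n-1})$ possessing a bounded linear right inverse.

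\emph{The half-space model.} For the trace estimate, given $f\in C_0^\infty(\overline{\mathbb R^n_+})$ I would write $f(x',0)$ via the fundamental theorem of calculus in $x_n$ and bound the Gagliardo seminorm of $f(\cdot,0)$ on $\mathbb R^{n-1}$, using Hardy's inequality and Minkowski's integral inequality, by $\|\nabla f\|_{L_p(\mathbb R^n_+)}+\|f\|_{L_p(\mathbb R^n_+)}$; this gives $\|f(\cdot,0)\|_{B^{1-1/p}_{p,p}(\mathbb R^{n-1})}\le C\|f\|_{H^1_p(\mathbb R^n_+)}$, and density of $C_0^\infty(\overline{\mathbb R^n_+})$ in $H^1_p(\mathbb R^n_+)$ extends this to a bounded operator $\gamma$ with the stated consistency on smooth functions. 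For the right inverse, set $(Eg)(x',x_n):=\mathcal F^{-1}_{\xi'\to x'}\big[\psi(x_n\langle\xi'\rangle)\,\widehat g(\xi')\big](x')$ with $\psi\in C_0^\infty(\mathbb R)$, $\psi(0)=1$ (equivalently, convolution of $g$ with an approximate identity on scale $x_n$); estimating $\|Eg\|_{H^1_p(\mathbb R^n_+)}$ by the $L_p$-boundedness of the Fourier multipliers involved yields $\|Eg\|_{H^1_p(\mathbb R^n_+)}\le C\|g\|_{B^{1-1/p}_{p,p}(\mathbb R^{n-1})}$, while $\gamma Eg=g$ since $\psi(0)=1$.

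\emph{Globalization and the operator on $\mathbb R^n$.} Transporting $E$ back to each chart through $\Lambda_k^{-1}$, multiplying by $\theta_k$ and summing produces a bounded linear right inverse $\gamma^{-1}_\pm:B^{1-1/p}_{p,p}(\partial\Omega)\to H^1_p(\Omega_\pm)$; pasting the local trace operators against $\{\theta_k\}$ produces $\gamma_\pm$, which is independent of all chart data because on the dense subspace $C^\infty(\overline\Omega_\pm)\subset H^1_p(\Omega_\pm)$ it coincides with $f_{|\partial\Omega}$, and its surjectivity is immediate from $\gamma_\pm\gamma^{-1}_\pm=\mathrm{id}$. Finally, any $F\in H^1_p(\mathbb R^n)$ restricts to $F_{|\Omega_+}\in H^1_p(\Omega_+)$ with $\|F_{|\Omega_+}\|_{H^1_p(\Omega_+)}\le\|F\|_{H^1_p(\mathbb R^n)}$, so $\gamma F:=\gamma_+(F_{|\Omega_+})$ is a bounded operator $H^1_p(\mathbb R^n)\to B^{1-1/p}_{p,p}(\partial\Omega)$; it agrees with $F\mapsto\gamma_-(F_{|\Omega_-})$ on $\mathcal D(\mathbb R^n)$, hence by density on all of $H^1_p(\mathbb R^n)$, which in particular identifies $\gamma F$ with $F_{|\partial\Omega}$ for smooth $F$.

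\emph{Main obstacle.} The only genuinely non-routine point is that $\partial\Omega$ is merely Lipschitz: one must work with the chart-defined boundary space $B^{s-1/p}_{p,p}(\partial\Omega)$ and verify both that bi-Lipschitz pullbacks act as isomorphisms on this scale and that the half-space trace and extension bounds survive at the fractional endpoint $s-1/p$ (which forces $1/p<s$ and, on Lipschitz domains, an upper restriction on $s$; here $s=1$ is admissible). Everything else is the classical half-space trace theory, and for $p=2$, $s=1$ the assertion is precisely \cite{Co}, \cite[Lemma 2.6]{Mikh}, \cite[Theorem 2.5.2]{M-W}.
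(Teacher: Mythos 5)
The paper does not prove this lemma at all: it is quoted as a known result, with the proof delegated to the cited references (Costabel, \cite[Lemma 2.6]{Mikh}, \cite[Theorem 2.5.2]{M-W}). So there is no internal argument to compare against; what you have written is the classical localization--flattening proof that underlies those citations, and as a sketch it is essentially correct, including your observation that only $s=1$ is coherent with the stated codomain of the right inverse and is the only case the paper uses. Three points deserve more care before this could stand as a self-contained proof. First, your extension bound for $p\neq 2$ is not really a ``Fourier multiplier'' estimate: the inequality $\|\nabla Eg\|_{L_p(\mathbb R^n_+)}\le C\|g\|_{B^{1-1/p}_{p,p}(\mathbb R^{n-1})}$ comes from integrating in the transversal variable the $L_p(\mathbb R^{n-1})$-norms of $\partial_{x_n}(\phi_{x_n}\ast g)$ and using the equivalent characterization of the Besov norm by approximate identities (or first differences); Mikhlin-type multiplier bounds alone do not see the $B^{1-1/p}_{p,p}$ regularity. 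Second, the paper defines $B^{s}_{p,p}(\partial\Omega)$ by real interpolation, $\left(L_p(\partial\Omega),H^1_p(\partial\Omega)\right)_{s,p}$, while your argument uses the chart-based Gagliardo definition; these agree on Lipschitz boundaries, but that identification is itself a (standard, citable) step you should make explicit, since otherwise the boundedness you prove is for a formally different target space. Third, for fractional $s$ the invariance of the Bessel potential space $H^s_p$ under the bi-Lipschitz shear is not automatic for $p\neq2$; it follows from boundedness of the composition operator on $L_p$ and $W^1_p$ together with $[L_p,W^1_p]_s=H^s_p$, which is the interpolation you gesture at and should state. With these details supplied, your proof is a legitimate, more self-contained alternative to the paper's pure citation.
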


\subsection{Weighted Sobolev spaces}
\label{S2.2}
Given $n\in {\mathbb N}$, $n\geq 3$, let $\rho :{\mathbb R}^n\to {\mathbb R}_+$ denote the weight function
\begin{align}
\label{rho}
\rho ({\bf x})=(1+|{\bf x}|^2)^{\frac{1}{2}}\,.
\end{align}
Let $p\in (1,\infty )$ and $\lambda \in {\mathbb R}$. Then the weighted Lebesgue space $L_p(\rho^\lambda ;{\mathbb R}^n)$ is defined as
\begin{align}
\label{Lp-weight}
&f\in {L_{p}(\rho^\lambda ;{\mathbb R}^n)} \Longleftrightarrow {\rho }^\lambda f\in L_{p}({\mathbb R}^n),
\end{align}
and
$L_{2}(\rho^\lambda ;{\mathbb R}^n)$
is a Hilbert space. 
We also consider the weighted Sobolev space ${\mathcal H}^1_{p}({\mathbb R}^n)$
(cf. \cite[Definition 1.1]{Al-Am}, \cite[Theorem I.1]{Ha}) consisting of functions $f$, for which the norm $\|f\|_{{\mathcal H}^1_{p}({\mathbb R}^n)}$, defined by
\begin{align}
\label{weight-2p}
\|f\|_{{\mathcal H}^1_{p}({\mathbb R}^n)}^p:=
\left\{\begin{array}{ll}
\big\|{\rho }^{-1}f\big\|_{L_p({\mathbb R}^n)}^p+\|\nabla f\|_{L_p({\mathbb R}^n)^n}^p & \mbox{ if } p\neq n\,,\\
\big\|{\rho }^{-1}\big(\ln (1+\rho ^2)\big)^{-1}f\big\|_{L_p({\mathbb R}^n)}^p+\|\nabla f\|_{L_p({\mathbb R}^n)^n}^p & \mbox{ if } p=n\,,
\end{array}
\right.
\end{align}
is bounded. This is a reflexive Banach space. The space ${\mathcal H}^{-1}_{p'}({\mathbb R}^n)$ is defined as the dual of the space ${\mathcal H}^1_{p}({\mathbb R}^n)$.

For the functions from ${\mathcal H}^1_{p}({\mathbb R}^n)$, the semi-norm
\begin{align}
\label{seminorm-R3}
|f|_{{\mathcal H}^1_{p}({\mathbb R}^n)}:=\|\nabla f\|_{L_p({\mathbb R}^n)^n}
\end{align}
is equivalent to the norm $\|\cdot \|_{{\mathcal H}^1_{p}({\mathbb R}^n)}$, given by \eqref{weight-2p}, if $1<p<n$ 
(cf., e.g., \cite[Theorem 1.1]{Al-Am-1}). Consequently,
\begin{align}
\label{Kozono}
{{\mathcal H}^1_{p}({\mathbb R}^n)={\hat{H}}^1_{p;0}({\mathbb R}^n)}
\end{align}
for $1<p<n$, where ${\hat{H}}^1_{p;0}({\mathbb R}^n)$ is the closure of the space ${\mathcal D}({\mathbb R}^n)$ with respect to the semi-norm \eqref{seminorm-R3}, cf. \cite[Proposition 2.4]{Kozono-Shor}.
Hence, the space ${\mathcal D}({\mathbb R}^n)$ is dense in ${\mathcal H}^1_{p}({\mathbb R}^n)$ (cf., e.g., \cite{Al-Am,Ha}).
Moreover, {for this range of $p$,
\begin{align}
\label{Kozono-1}
{\hat{H}}^1_{p;0}({\mathbb R}^n)=\left\{u\in L_{\frac{np}{n-p}}({\mathbb R}^n):
\nabla u\in L_p({\mathbb R}^n)^n\right\},
\end{align}
and the divergence operator ${\rm{div}}\!:\!{\hat{H}}^1_{p;0}({\mathbb R}^n)^n\!\to \!L_p({\mathbb R}^n)$ is surjective (cf. \cite[Proposition 2.4 (i), Lemma 2.5]{Kozono-Shor}).}

The set $\{{\mathcal H}^1_{p}({\mathbb R}^n)\}_{1<p<n}$ is a complex interpolation scale, which means that
\begin{align}
\label{int-weight}
[{\mathcal H}^1_{p_1}({\mathbb R}^n),{\mathcal H}^1_{p_2}({\mathbb R}^n)]_\theta ={\mathcal H}^1_{p}({\mathbb R}^n)\,,
\end{align}
whenever $p_1,p_2\!\in \!(1,n)$, $\theta \!\in \!(0,1)$, and $\frac{1}{p}\!=\!\frac{1-\theta}{p_1}\!+\!\frac{\theta }{p_2}$ (see 
\cite[Theorem 3]{Triebel-2}, \cite[Theorem 2.1, Corollary 2.7]{Lang-Mendez}). By $[\cdot ,\cdot ]_\theta $
we denote the space
obtained with the complex interpolation method, and the equality of spaces in \eqref{int-weight} holds with equivalent norms.
The complex interpolation spaces backgrounds can be found, e.g., in \cite[Chapter 4]{B-L} and \cite[Section 1.9]{Triebel}.

The space ${\mathcal H}^1_{p}(\Omega _{-})$ can be defined in terms of the norm $\|\cdot \|_{{\mathcal H}^1_{p}(\Omega _{-})}$, which has a similar expression to the norm in \eqref{weight-2p}, but with $\Omega _{-}$ in place of ${\mathbb R}^n$, and is a reflexive Banach space. The space $\widetilde{\mathcal H}^{-1}_{p'}(\Omega _{-})$ is defined as the dual of the space ${\mathcal H}^1_{p}(\Omega _{-})$.

Let $\mathring{\mathcal H}^1_{p}(\Omega _{-})\subset {\mathcal H}^1_{p}(\Omega_-)$ denote the closure of the space ${\mathcal D}({\Omega  }_{-})$ in ${\mathcal H}^1_{p}(\Omega _{-})${, and let
$\widetilde{\mathcal H}^1_{p}(\Omega _{-})\subset {\mathcal H}^1_{p}(\mathbb R^n)$ denote the closure of the space ${\mathcal D}({\Omega  }_{-})$ in ${\mathcal H}^1_{p}(\mathbb R^n)$.
The space $\widetilde{\mathcal H}^1_{p}(\Omega _{-})$ can be also characterised as
\begin{align}
\widetilde{\mathcal H}^1_{p}(\Omega _{-})=\left\{u\in {\mathcal H}^1_{p}(\mathbb R^n):{\rm{supp}}\, {u}\subseteq \overline{\Omega }_{-}\right\},
\end{align}
and identifies isomorphically with $\mathring{\mathcal H}^1_{p}(\Omega _{-})$ via the operator $\mathring E_{-}$ of extension by zero outside $\Omega _{-}$ (see, e.g., \cite[(2.9)]{B-M-M-M}).
The space ${\mathcal H}^{-1}_{p'}(\Omega _{-})$ is defined as the dual of the space $\widetilde{\mathcal H}^1_{p}(\Omega _{-})$. Since ${\mathcal D}(\Omega _-)$ is dense in $\mathring{\mathcal H}^1_{p}(\Omega _{-})$, and in $\widetilde{\mathcal H}^1_{p}(\Omega _{-})$, ${\mathcal H}^{-1}_{p'}(\Omega _{-})$ is a space of distributions.}

The space $\mathring{\mathcal H}^1_{p}(\Omega _{-})$ can be characterized as
\begin{equation}
\label{property}
\mathring{\mathcal H}^1_{p}(\Omega _{-})=\big\{v\in {{\mathcal H}^1_{p}}(\Omega _{-}):\gamma_{-}v=0 \mbox{ on } \partial \Omega \big\}
\end{equation}
(cf., e.g., \cite[(1.2)]{AGG1997}, \cite[Theorem 4.2, (4.16)]{B-M-M-M}).

For $p\in (1,n)$, the semi-norm
\begin{align}
\label{seminorm}
|f|_{{\mathcal H}^1_{p}(\Omega _{-})}:=\|\nabla f\|_{L_p(\Omega _{-})^n}
\end{align}
is a norm on the space ${\mathcal H}^1_{p}(\Omega _{-})$ that is equivalent to the full norm $\|\cdot \|_{{\mathcal H}^1_{p}(\Omega _{-})}$ given by \eqref{weight-2p} with $\Omega _{-}$ in place of ${\mathbb R}^n$. Moreover, the semi-norm \eqref{seminorm} is an equivalent norm on the space $\mathring{\mathcal H}^1_{p}(\Omega _{-})$ for any $p\in (1,\infty )$ (cf., e.g., \cite[Theorem 1.2]{AGG1997}, \cite[Theorem 1.2]{Al-Am}).
Consequently,
$
\mathring{\mathcal H}^1_{p}(\Omega _{-})={\hat{H}}^1_{p;0}(\Omega _{-}),$
for any $p\in (1,\infty )$, where ${\hat{H}}^1_{p;0}(\Omega _{-})$ is the closure of ${\mathcal D}(\Omega _{-})$ in the semi-norm \eqref{seminorm} (cf., e.g., \cite[Remark 1.3]{Al-Am}).

In addition, the statement of Lemma \ref{trace-operator1} extends to the space ${\mathcal H}^1_{p}({\Omega }_{-})$. Hence, there is a bounded, surjective {\it exterior trace operator}
\begin{align}
\label{ext-trace}
\gamma_{-}:{\mathcal H}^1_{p}({\Omega }_{-})\to B_{p,p}^{1-\frac{1}{p}}({\partial\Omega })
\end{align}
{(see, e.g., {\cite[p. 69]{Sa-Se}})}. Moreover, 
there exists a (non-unique) linear bounded right inverse $\gamma^{-1}_-:B_{p,p}^{1-\frac{1}{p}}({\partial\Omega })\!\to \! \mathcal{H}^1_{p}({\Omega }_-)$ of operator \eqref{ext-trace} (see \cite[Lemma 2.2]{K-L-M-W}).
 \cite[p. 1350006-4]{Ch-Mi-Na-3}).
The trace operator $\gamma \!:\!{\mathcal H}^1_{p}({\mathbb R}^n)\!\to \! B_{p,p}^{1-\frac{1}{p}}({\partial\Omega })$ is also linear, bounded and surjective $($cf., e.g., \cite[Theorem 2.3, Lemma 2.6]{Mikh}, \cite[(2.2)]{B-H} for $p=2$$)$.

In the case $p=2$, we employ the notations
$
{\mathcal H}^{\pm 1}({\mathbb R}^n):={\mathcal H}^{\pm 1}_{2}({\mathbb R}^n)$,
${\mathcal H}^{\pm 1}(\Omega _{-}):={\mathcal H}^{\pm 1}_{2}(\Omega _{-})
$, $H^s(\partial \Omega )=B_{2,2}^s(\partial \Omega )$,
and note that all these spaces are Hilbert spaces.

For $1<p<n$, let us also introduce the space $\mathcal H^1_p(\mathbb R^n\setminus\partial\Omega)$ consisting of functions $u$, for which the norm
\begin{align}
\label{standard-weight-p}
\|{u}\|_{\mathcal H^1_p(\mathbb R^n\setminus\partial\Omega)}
=\left(\|\rho ^{-1}{u}\|_{L_p({\mathbb R}^n)}^p
+\|\nabla {u}\|_{L_p(\Omega_+\cup\,\Omega_-)^{n}}^p\right)^{\frac{1}{p}}
\end{align}
is bounded.
Evidently, then \mbox{$u|_{\Omega_+}\in {H}^1_{p}(\Omega _+)$}, 
\mbox{$u|_{\Omega_-}\in  {\mathcal H}^1_{p}(\Omega _{-})$}, 
and the norm
$\left(\|u\|^p_{H^1_p(\Omega_+)}+\|u\|^p_{\mathcal H^1_p(\Omega_-)}\right)^{\frac{1}{p}}$ is equivalent to the norm \eqref{standard-weight-p} in $\mathcal H^1_p(\mathbb R^n\setminus\partial\Omega)$.
The jump of $u$ across $\partial \Omega $ is given by $\left[{\gamma }(u)\right]:={\gamma }_{+}(u)-{\gamma }_{-}(u)$. 
If ${u}\in {\mathcal H}^1_{p}({\mathbb R}^n\setminus \partial \Omega )$ and $\left[{\gamma }(u)\right]=0$ then ${u}\in {\mathcal H}^1_{p}({\mathbb R}^n)$, and conversely, if ${u}\in {\mathcal H}^1_{p}({\mathbb R}^n)$, then $\left[{\gamma }(u)\right]=0$ (cf., e.g., \cite[Theorem 5.13]{B-M-M-M}).

\begin{remark}
\label{Leray}
Let $B_R$ denote the ball of radius $R$ in ${\mathbb R}^n$ and center at the origin (assumed to be a point of the bounded Lipschitz domain $\Omega $). Also, let $S^{n-1}$ be the unit sphere in ${\mathbb R}^n$. 
{Similar arguments to those for \cite[Lemma 2.1, Remark 2.4]{Amrouche-3} imply that
any function $u$ in ${\mathcal H}^1_{p}({\mathbb R}^n)$ or ${\mathcal H}^1_{p}(\Omega _{-})$, with $1<p<n$, vanishes at infinity in the} sense of Leray, i.e.,
\begin{align}
{\lim _{r\to \infty }\int _{S^{n-1}}|u(r{\bf y})|d\sigma _{\bf y}=0.}
\end{align}
\end{remark}

\subsection{The conormal derivative operator for the $L_{\infty}$ coefficient Stokes system}

Recall that $\check{\boldsymbol{\mathcal L}}$ is a second-order elliptic differential operator in divergence form given by \eqref{Stokes-0}, where the coefficients $A^{\alpha \beta }$ of
${\mathbb A}=\left(A^{\alpha \beta }\right)_{1\leq \alpha ,\beta \leq n}$ are $n\times n$ matrix-valued functions on ${\mathbb R}^n$ with bounded measurable, real-valued entries $a_{ij}^{\alpha \beta }$, i.e.,
$A^{\alpha \beta }=\left\{a_{ij}^{\alpha \beta }\right\}_{1\leq i,j\leq n}$,
and  the {\it strong ellipticity condition} \eqref{mu} is satisfied.
Similar to \cite{Cai-Wang-2010, Camano-Gatica2016} and references therein, we can define the {\em non-symmetric pseudostress tensor} $\boldsymbol{\sigma}({\bf u},\pi)$ with components $\sigma_{\alpha i}(\mathbf u,\pi)=a_{ij}^{\alpha \beta }\partial_\beta u_j-\delta_{\alpha i}\pi$.

Let $\boldsymbol\nu =(\nu _1,\ldots ,\nu _n)^\top$ be the outward unit normal to $\Omega_{ +}$, which is defined a.e. on $\partial {\Omega }$.
When $({\bf u},\pi)\!\in C^1(\overline\Omega_{\pm})^n\times \!C^0(\overline\Omega_{\pm})$, 
the {\em classical} interior and exterior conormal derivatives (i.e., the {\it boundary pseudotractions}) for the Stokes operator
$
\boldsymbol{\mathcal L}({\bf u},\pi )=\partial_\alpha\left(A^{\alpha \beta }\partial_\beta {\bf u}\right)
-\nabla \pi
$
are
\begin{align*}
{\bf T}^{c\pm} ({\bf u},\pi):=\gamma_\pm\boldsymbol{\sigma}({\bf u},\pi)\cdot\boldsymbol\nu
=\gamma_\pm (A^{\alpha\beta}\partial_\beta{\bf u})\nu_\alpha -\gamma_\pm\pi\,\boldsymbol\nu\quad \mbox{on }\partial\Omega,
\end{align*}
cf., e.g., \cite{Choi-Dong-Kim-JMFM}.
Here and in the sequel, the indices $\pm $ mark the trace and conormal derivatives from $\Omega_\pm$, respectively.
Moreover, {the following first Green identity holds,}
\begin{align}
\label{special-case-1-var}
\!\!\!\!\!{{\pm}\left\langle {\bf T}^{c\pm}({\bf u},\pi ),
\boldsymbol\varphi \right\rangle _{_{\!\partial\Omega  }}}
&{=\big\langle A^{\alpha \beta }\partial _\beta {\bf u},\partial _\alpha \boldsymbol\varphi \big\rangle _{\Omega_{\pm}}
-\langle \pi,{\rm{div}}\, \boldsymbol\varphi \rangle _{\Omega_{\pm}}
+\left\langle \boldsymbol{\mathcal L}({\bf u},\pi ),\boldsymbol\varphi\right\rangle _{{\Omega_{\pm}}}},\ \forall \ \boldsymbol\varphi \in {\mathcal D}({\mathbb R}^n)^n.
\end{align}

\begin{definition}
\label{conormal-derivative-var-Brinkman}
For $p\in (1,\infty )$, let us define the space
\begin{align*}
{\boldsymbol{\mathcal H}}^1_{p}({\Omega_{\pm}},\boldsymbol{\mathcal L})
:=\Big\{&({\bf u}_{\pm},\pi_{\pm},{\tilde{\bf f}}_{\pm})\in
{\mathcal H}^1_{p}({\Omega_{\pm}})^n\times L_p({\Omega_{\pm}})\times \widetilde{\mathcal H}^{-1}_{p}({\Omega_{\pm}})^n:
{\boldsymbol{\mathcal L}}({\bf u}_{\pm},\pi_{\pm})={\tilde{\bf f}}_{\pm}|_{\Omega_{\pm}}
\mbox{ in } {\Omega_{\pm}}\Big\}.
\end{align*}
\end{definition}
{Formula \eqref{special-case-1-var}} suggests the weak definition of the {formal and} generalized conormal derivatives for the $L_{\infty}$ coefficient Stokes system in the setting of $L_p$-based weighted Sobolev spaces (cf., e.g., \cite[Lemma 3.2]{Co}, \cite[Lemma 2.9]{K-L-M-W},
\cite[Definition 3.1, Theorem 3.2]{Mikh}, \cite[Theorem 10.4.1]{M-W}).
\begin{definition}
\label{conormal-derivative-var-Brinkman}
Let $p\in (1,\infty )$.
For any $({\bf u}_{\pm},\pi_{\pm},{\tilde{\bf f}}_{\pm})\in
{\mathcal H}^1_{p}({\Omega_{\pm}})^n\times L_p({\Omega_{\pm}})\times
\widetilde{\mathcal H}^{-1}_{p}({\Omega_{\pm}})^n$,
the {\em {formal} conormal derivatives}
${\bf T}^{\pm}({\bf u}_{\pm},\pi_{\pm} ;{\tilde{\bf f}}_{\pm})\!\in \! B_{p,p}^{-\frac{1}{p}}(\partial\Omega )^n$
{are defined} as
\begin{multline}
\label{conormal-derivative-var-Brinkman-3}
\!\!\!\!\!{\pm}\big\langle {\bf T}^{\pm}({\bf u}_{\pm},\pi_{\pm} ;{\tilde{\bf f}}_{\pm}),\boldsymbol\Phi\big\rangle _{_{\!\partial\Omega  }}\!\!
:=\!\big\langle A^{\alpha \beta }\partial _\beta ({\bf u}_{\pm}),\partial _\alpha (\gamma^{-1}_{\pm}\boldsymbol\Phi)\big\rangle _{\Omega_{\pm}}\!
-\!\big\langle {\pi_{\pm}},{\rm{div}}(\gamma^{-1}_{\pm}\boldsymbol\Phi)\big\rangle _{\Omega_{\pm}}\!\\
+\!\big\langle {\tilde{\bf f}}_{\pm},\gamma^{-1}_{\pm}\boldsymbol\Phi\big\rangle _{{\Omega_{\pm}}},
\quad \forall\,\boldsymbol\Phi\!\in\!B_{p',p'}^{\frac{1}{p}}(\partial\Omega )^n,
\end{multline}
where $\gamma^{-1}_{\pm}\!:\!B_{p',p'}^{\frac{1}{p}}(\partial\Omega )^n\!\to \!{\mathcal H}^1_{p'}({\Omega_{\pm}})^n$ is a bounded right inverse of the trace operator $\gamma_{\pm}\!:\!{\mathcal H}^1_{p'}({\Omega_{\pm}})^n\!\to \!B_{p',p'}^{\frac{1}{p}}(\partial\Omega )^n$.

Moreover, if $({\bf u}_{\pm},\pi_{\pm} ,{\tilde{\bf f}}_{\pm})\in
{\boldsymbol{\mathcal H}}^1_{p}({\Omega_{\pm}},{\boldsymbol{\mathcal L}})$, equation
\eqref{conormal-derivative-var-Brinkman-3} defines
the {\em generalized conormal derivatives} ${\bf T}^{\pm}({\bf u}_{\pm},\pi_{\pm} ;{\tilde{\bf f}}_{\pm}) \in B_{p,p}^{-\frac{1}{p}}(\partial\Omega )^n$.
\end{definition}
In addition, we have the following assertion (see also \cite{Co}, \cite[Theorem 5.3]{Mikh-3}, \cite[Lemma 2.9]{K-L-M-W}, \cite[Theorem 10.4.1]{M-W}).
\begin{lemma}
\label{lem-add1}
Let $p\in (1,\infty )$. 
\begin{itemize}
\item[$(i)$]
{The formal conormal derivative operator
${\bf T}^{\pm }:{\mathcal H}^1_{p}({\Omega_{\pm}})^n\!\times \!L_p({\Omega_{\pm}})\times \widetilde{\mathcal H}^{-1}_{p}({\Omega_{\pm}})^n\!\to \!B_{p,p}^{-\frac{1}{p}}(\partial \Omega )^n$
is linear and continuous.}
\item[$(ii)$]
The {generalized} conormal derivative operator ${\bf T}^{\pm }:\boldsymbol{\mathcal H}^1_{p}(\Omega _{\pm },\boldsymbol{\mathcal L})\to B_{p,p}^{-\frac{1}{p}}(\partial \Omega )^n$ is linear and continuous, and definition \eqref{conormal-derivative-var-Brinkman-3} does not depend on the choice of a right inverse $\gamma _{\pm }^{-1}:B_{p',p'}^{\frac{1}{p}}(\partial \Omega )^n\to {\mathcal H}^1_{p'}(\Omega _\pm )^n$ of the trace operator $\gamma _{\pm }:{\mathcal H}^1_{p'}(\Omega )^n\to B_{p',p'}^{\frac{1}{p}}(\partial \Omega )^n$. 
In addition,
the first Green identity
\begin{align}
\label{Green-particular-p}
\!\!\!\!\!\!\!\!\!\!\!\!\!\!\!{\pm}\big\langle {\bf T}^{\pm}({\bf u}_{\pm},\pi_{\pm};{\tilde{\bf f}}_{\pm}),\gamma_{\pm}{\bf w}_\pm\big\rangle _{{\partial\Omega  }}\!=\!\big\langle A^{\alpha \beta }\partial _\beta ({\bf u}_{\pm}),\partial _\alpha ({\bf w}_\pm)\big\rangle _{\Omega_{\pm}}\!\!-\!\langle {\pi_{\pm}},{\rm{div}}\, {\bf w}_\pm \rangle _{\Omega_{\pm}}\!+\!\langle {\tilde{\bf f}}_{\pm},{\bf w}_\pm \rangle _{{\Omega_{\pm}}}\,,
\end{align}
holds for {any} ${\bf w}_\pm \!\in \!{\mathcal H}^1_{p'}({\Omega_{\pm}})^n$ and $({\bf u}_{\pm},\pi_{\pm},{\tilde{\bf f}}_{\pm})\in {\boldsymbol{\mathcal H}}^1_{p}({\Omega_{\pm}},{\boldsymbol{\mathcal L}})$.
\end{itemize}
\end{lemma}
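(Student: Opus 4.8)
The plan is to extract both assertions directly from the defining identity \eqref{conormal-derivative-var-Brinkman-3}. Part $(i)$ reduces to a continuity estimate for its right-hand side, while the whole of $(ii)$ rests on the single observation that, for a fixed triple $({\bf u}_{\pm},\pi_{\pm},\tilde{\bf f}_{\pm})\in\boldsymbol{\mathcal H}^1_{p}(\Omega_{\pm},\boldsymbol{\mathcal L})$, the right-hand side of \eqref{conormal-derivative-var-Brinkman-3}, regarded as a functional of the test field $\boldsymbol w:=\gamma^{-1}_{\pm}\boldsymbol\Phi\in{\mathcal H}^1_{p'}(\Omega_{\pm})^n$, vanishes whenever $\boldsymbol w\in\mathring{\mathcal H}^1_{p'}(\Omega_{\pm})^n$.

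For $(i)$, linearity in $({\bf u}_{\pm},\pi_{\pm},\tilde{\bf f}_{\pm})$ is immediate from \eqref{conormal-derivative-var-Brinkman-3}. For continuity I would fix $\boldsymbol\Phi\in B_{p',p'}^{\frac{1}{p}}(\partial\Omega)^n$ and bound the three terms on the right of \eqref{conormal-derivative-var-Brinkman-3}: the first by H\"older's inequality together with the boundedness $|a_{ij}^{\alpha\beta}|\le c_{\mathbb A}$ and the membership ${\bf u}_{\pm}\in{\mathcal H}^1_{p}(\Omega_{\pm})^n$; the second by H\"older's inequality; the third by the duality $\widetilde{\mathcal H}^{-1}_{p}(\Omega_{\pm})^n=\big({\mathcal H}^1_{p'}(\Omega_{\pm})^n\big)'$. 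Using that $\gamma^{-1}_{\pm}:B_{p',p'}^{\frac{1}{p}}(\partial\Omega)^n\to{\mathcal H}^1_{p'}(\Omega_{\pm})^n$ is bounded and then taking the supremum over $\boldsymbol\Phi$ in the unit ball, while recalling $\big(B_{p',p'}^{\frac{1}{p}}(\partial\Omega)\big)'=B_{p,p}^{-\frac{1}{p}}(\partial\Omega)$, one concludes that ${\bf T}^{\pm}$ maps boundedly into $B_{p,p}^{-\frac{1}{p}}(\partial\Omega)^n$, with operator norm controlled by $\|{\bf u}_{\pm}\|_{{\mathcal H}^1_{p}(\Omega_{\pm})^n}+\|\pi_{\pm}\|_{L_p(\Omega_{\pm})}+\|\tilde{\bf f}_{\pm}\|_{\widetilde{\mathcal H}^{-1}_{p}(\Omega_{\pm})^n}$.

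Next I would establish the key step behind $(ii)$. By the estimates just made, for fixed $({\bf u}_{\pm},\pi_{\pm},\tilde{\bf f}_{\pm})$ the right-hand side of \eqref{conormal-derivative-var-Brinkman-3} extends to a bounded linear functional $F(\boldsymbol w)$ on ${\mathcal H}^1_{p'}(\Omega_{\pm})^n$. For $\boldsymbol w\in{\mathcal D}(\Omega_{\pm})^n$, distributional integration by parts rewrites $F(\boldsymbol w)$ as $-\langle\boldsymbol{\mathcal L}({\bf u}_{\pm},\pi_{\pm}),\boldsymbol w\rangle_{\Omega_{\pm}}+\langle\tilde{\bf f}_{\pm},\boldsymbol w\rangle_{\Omega_{\pm}}$, and for a triple in $\boldsymbol{\mathcal H}^1_{p}(\Omega_{\pm},\boldsymbol{\mathcal L})$ one has $\boldsymbol{\mathcal L}({\bf u}_{\pm},\pi_{\pm})=\tilde{\bf f}_{\pm}|_{\Omega_{\pm}}$ in $\Omega_{\pm}$; since $\boldsymbol w$ is compactly supported in $\Omega_{\pm}$, $\langle\tilde{\bf f}_{\pm}|_{\Omega_{\pm}},\boldsymbol w\rangle_{\Omega_{\pm}}=\langle\tilde{\bf f}_{\pm},\boldsymbol w\rangle_{\Omega_{\pm}}$, so the two terms cancel and $F(\boldsymbol w)=0$. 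Because ${\mathcal D}(\Omega_{\pm})^n$ is dense in $\mathring{\mathcal H}^1_{p'}(\Omega_{\pm})^n$ and $F$ is continuous, $F\equiv0$ on $\mathring{\mathcal H}^1_{p'}(\Omega_{\pm})^n$. From here $(ii)$ follows at once: any two bounded right inverses $\gamma^{-1,1}_{\pm},\gamma^{-1,2}_{\pm}$ of $\gamma_{\pm}$ satisfy $\gamma^{-1,1}_{\pm}\boldsymbol\Phi-\gamma^{-1,2}_{\pm}\boldsymbol\Phi\in\mathring{\mathcal H}^1_{p'}(\Omega_{\pm})^n$ (by the zero-trace characterization \eqref{property} and its analogue for the bounded domain $\Omega_+$), so the right-hand side of \eqref{conormal-derivative-var-Brinkman-3} takes the same value for both choices, i.e. ${\bf T}^{\pm}$ is well defined on $\boldsymbol{\mathcal H}^1_{p}(\Omega_{\pm},\boldsymbol{\mathcal L})$; its continuity there is inherited from $(i)$ by restriction; and for the first Green identity \eqref{Green-particular-p}, given $\boldsymbol w_{\pm}\in{\mathcal H}^1_{p'}(\Omega_{\pm})^n$ I would take $\boldsymbol\Phi=\gamma_{\pm}\boldsymbol w_{\pm}$ in \eqref{conormal-derivative-var-Brinkman-3} and use $\boldsymbol w_{\pm}-\gamma^{-1}_{\pm}\gamma_{\pm}\boldsymbol w_{\pm}\in\mathring{\mathcal H}^1_{p'}(\Omega_{\pm})^n$ to replace $\gamma^{-1}_{\pm}\boldsymbol\Phi$ by $\boldsymbol w_{\pm}$ on the right-hand side without changing its value, which is precisely \eqref{Green-particular-p}.

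Apart from these arrangements the argument is routine; the one point that needs genuine care — and the main obstacle — is the density of ${\mathcal D}(\Omega_{\pm})^n$ in $\mathring{\mathcal H}^1_{p'}(\Omega_{\pm})^n$ together with the trace characterization \eqref{property} (and its bounded-domain analogue), since these are exactly what place the differences $\gamma^{-1,1}_{\pm}\boldsymbol\Phi-\gamma^{-1,2}_{\pm}\boldsymbol\Phi$ and $\boldsymbol w_{\pm}-\gamma^{-1}_{\pm}\gamma_{\pm}\boldsymbol w_{\pm}$ into the subspace on which $F$ is shown to vanish; one must also keep track, for compactly supported test fields, of the identification of the pairing $\langle\tilde{\bf f}_{\pm},\cdot\rangle_{\Omega_{\pm}}$ with the distributional action of $\tilde{\bf f}_{\pm}|_{\Omega_{\pm}}$. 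All these ingredients are available from Section~\ref{S2}.
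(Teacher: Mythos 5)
Your proof is correct and follows essentially the same route as the argument the paper relies on (the paper omits the details, citing \cite[Lemma 2.2]{K-L-W1}, \cite[Definition 3.1, Theorem 3.2]{Mikh}, \cite{Mikh-3}): a direct continuity estimate for part $(i)$, and for part $(ii)$ the observation that, under the PDE constraint, the right-hand side of \eqref{conormal-derivative-var-Brinkman-3} vanishes on ${\mathcal D}(\Omega_{\pm})^n$ and hence, by density and continuity, on all zero-trace test fields, which yields independence of the right inverse and the Green identity \eqref{Green-particular-p} simultaneously. No gaps.
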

The proof follows with similar arguments as those for \cite[Lemma 2.2]{K-L-W1} (see also
\cite[Definition 3.1, Theorem 3.2]{Mikh}, \cite{Mikh-3}). We omit the details for the sake of brevity.

For $({\bf u}_{\pm},\pi_{\pm},{\tilde{\bf f}}_{\pm})\in
{\mathcal H}^1_{p}({\Omega_{\pm}})^n\times L_p({\Omega_{\pm}})\times\widetilde{\mathcal H}^{-1}_{p}({\Omega_{\pm}})^n$, let us introduce the couples
${\bf u}:=\{{\bf u}_+,{\bf u}_-\},$
$\pi:=\{\pi_+, \pi_-\},$
$\tilde{\bf f}:=\{{\tilde{\bf f}}_+ , {\tilde{\bf f}}_-\},$
and denote the jump of the corresponding conormal derivatives by
\begin{align}
\label{jt}
&[{\bf T}({\bf u},\pi;\tilde{\bf f})]:=
\!{\bf T}^{+}({\bf u}_+,\pi_+;\tilde{\bf f}_+)\!-\!{\bf T}^{-}({\bf u}_-,\pi_-;\tilde{\bf f}_-).
\end{align}

For $({\bf u}_{\pm},\pi_{\pm})$ such that
$({\bf u}_{\pm},\pi_{\pm},{\bf 0})\in {\boldsymbol{\mathcal H}}^1_{p}({\Omega_{\pm}},{\boldsymbol{\mathcal L}})$,
we will also use the notations
${\bf T}^\pm({\bf u}^\pm,\pi^\pm):={\bf T}^\pm({\bf u}^\pm,\pi^\pm;{\bf 0})$
and
$[{\bf T}({\bf u},\pi)]:=[{\bf T}({\bf u},\pi;{\bf 0})].$

Lemma \ref{lem-add1} implies the following result.
\begin{lemma}
\label{lemma-add-new}
Let $p\in (1,\infty )$, $({\bf u}_{\pm},\pi_{\pm}$,
${\tilde{\bf f}}_{\pm})\in {\boldsymbol{\mathcal H}}^1_{p}({\Omega_{\pm}},{\boldsymbol{\mathcal L}})$,
and ${\bf w}\in {\mathcal H}^1_{p'}(\mathbb R^n)^n$.
Then
\begin{align}
\label{jump-conormal-derivative-1f}
\big\langle [{\bf T}({\bf u},\pi ;\tilde{\bf f})],\gamma{\bf w}\big\rangle _{_{\partial\Omega }}
=&
\big\langle A^{\alpha \beta }\partial _\beta ({\bf u}_+),\partial _\alpha ({\bf w})\big\rangle _{\Omega_+}
+\big\langle A^{\alpha \beta }\partial _\beta ({\bf u}_-),\partial_\alpha ({\bf w})\big\rangle _{\Omega_-}\nonumber\\
&
-\langle {\pi}_+,{\rm{div}}\, {\bf w}\rangle_{\Omega_+}
-\langle {\pi}_+,{\rm{div}}\, {\bf w}\rangle_{\Omega_-}
+\left\langle\tilde{\bf f}_+,{\bf w}\right\rangle_{\Omega_+}
+\left\langle\tilde{\bf f}_-,{\bf w}\right\rangle_{\Omega_-}.
\end{align}
Moreover, if
$({\bf u}_{\pm},\pi_{\pm},{\bf 0})\in {\boldsymbol{\mathcal H}}^1_{p}({\Omega_{\pm}},{\boldsymbol{\mathcal L}})$,
then
\begin{align}
\label{jump-conormal-derivative-1f0}
\big\langle [{\bf T}({\bf u},\pi)],\gamma{\bf w}\big\rangle _{_{\partial\Omega }}
=&
\big\langle A^{\alpha \beta }\partial _\beta ({\bf u}_+),\partial _\alpha ({\bf w})\big\rangle _{\Omega_+}
+\big\langle A^{\alpha \beta }\partial _\beta ({\bf u}_-),\partial_\alpha ({\bf w})\big\rangle _{\Omega_-}\nonumber\\
&
-\langle {\pi}_+,{\rm{div}}\, {\bf w}\rangle_{\Omega_+}
-\langle {\pi}_+,{\rm{div}}\, {\bf w}\rangle_{\Omega_-}.
\end{align}
\end{lemma}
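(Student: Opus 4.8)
The plan is to obtain both identities directly from the first Green identity \eqref{Green-particular-p} of Lemma \ref{lem-add1}(ii), applied separately on the bounded domain $\Omega_+$ and on the exterior domain $\Omega_-$, and then simply add the two relations. First I would fix ${\bf w}\in {\mathcal H}^1_{p'}(\mathbb R^n)^n$ and set ${\bf w}_\pm:={\bf w}|_{\Omega_\pm}$. Inspection of the norm \eqref{weight-2p} (its two constituent $L_{p'}$-norms over $\mathbb R^n$ dominate the corresponding norms over $\Omega_\pm$) shows that ${\bf w}_+\in {\mathcal H}^1_{p'}(\Omega_+)^n$ and ${\bf w}_-\in {\mathcal H}^1_{p'}(\Omega_-)^n$, so these restrictions are legitimate test fields for \eqref{Green-particular-p} on $\Omega_+$ and on $\Omega_-$, respectively. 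Moreover, since ${\bf w}$ belongs to ${\mathcal H}^1_{p'}(\mathbb R^n)^n$, its trace has no jump across $\partial\Omega$, i.e. $[\gamma({\bf w})]=\gamma_+{\bf w}_+-\gamma_-{\bf w}_-=0$ (this is the property of ${\mathcal H}^1_{p'}(\mathbb R^n)$ recorded just before Remark \ref{Leray}); hence $\gamma_+{\bf w}_+=\gamma_-{\bf w}_-=\gamma{\bf w}$ on $\partial\Omega$.

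Next I would write \eqref{Green-particular-p} twice: once for $({\bf u}_+,\pi_+,\tilde{\bf f}_+)\in {\boldsymbol{\mathcal H}}^1_{p}(\Omega_+,{\boldsymbol{\mathcal L}})$ with test field ${\bf w}_+$, and once for $({\bf u}_-,\pi_-,\tilde{\bf f}_-)\in {\boldsymbol{\mathcal H}}^1_{p}(\Omega_-,{\boldsymbol{\mathcal L}})$ with test field ${\bf w}_-$, and then add them. Because of the sign $\pm$ on the left-hand side of \eqref{Green-particular-p}, the sum of the two boundary pairings becomes $\langle {\bf T}^{+}({\bf u}_+,\pi_+;\tilde{\bf f}_+),\gamma_+{\bf w}_+\rangle_{\partial\Omega}-\langle {\bf T}^{-}({\bf u}_-,\pi_-;\tilde{\bf f}_-),\gamma_-{\bf w}_-\rangle_{\partial\Omega}$, which, upon inserting $\gamma_+{\bf w}_+=\gamma_-{\bf w}_-=\gamma{\bf w}$ from the first step and the definition \eqref{jt} of the jump $[{\bf T}({\bf u},\pi;\tilde{\bf f})]$, collapses to $\langle [{\bf T}({\bf u},\pi;\tilde{\bf f})],\gamma{\bf w}\rangle_{\partial\Omega}$. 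The sum of the two right-hand sides of \eqref{Green-particular-p} is then precisely the right-hand side of \eqref{jump-conormal-derivative-1f}, which proves \eqref{jump-conormal-derivative-1f}. Finally, specialising to $\tilde{\bf f}_\pm={\bf 0}$, together with the convention ${\bf T}^{\pm}({\bf u}^\pm,\pi^\pm):={\bf T}^{\pm}({\bf u}^\pm,\pi^\pm;{\bf 0})$, yields \eqref{jump-conormal-derivative-1f0}.

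The only step requiring genuine attention — and the one I would verify explicitly — is the first one: that the restrictions ${\bf w}_\pm$ really lie in the weighted spaces ${\mathcal H}^1_{p'}(\Omega_\pm)^n$ demanded by \eqref{Green-particular-p} (for the unbounded component $\Omega_-$ this rests on the definition of ${\mathcal H}^1_{p'}(\Omega_-)$ through the weight $\rho$, rather than on a naive Sobolev restriction), and that the interior and exterior traces of ${\bf w}$ coincide. Everything else is bookkeeping built on Lemma \ref{lem-add1}, so no real obstacle is anticipated.
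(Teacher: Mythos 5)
Your argument is correct and is exactly the paper's proof: the authors likewise observe that $\gamma_+{\bf w}=\gamma_-{\bf w}=\gamma{\bf w}$ for ${\bf w}\in{\mathcal H}^1_{p'}(\mathbb R^n)^n$ and then apply the first Green identity \eqref{Green-particular-p} on $\Omega_+$ and $\Omega_-$ and add, using the jump definition \eqref{jt}. Your extra care about the restrictions ${\bf w}|_{\Omega_\pm}$ landing in ${\mathcal H}^1_{p'}(\Omega_\pm)^n$ is a harmless elaboration of what the paper leaves implicit.
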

\begin{proof}
It suffices to remark that $\gamma_+{\bf w}=\gamma_-{\bf w}=\gamma{\bf w}$ and apply formula \eqref{Green-particular-p}.
\end{proof}

\subsection{Conormal derivative for the adjoint system}
\label{adj-conormal}
The {formally} adjoint operator ${\boldsymbol{\mathcal L}}^*$ is defined by
\begin{align}
\label{Stokes-new-adjoint}
&\boldsymbol{\mathcal L}^*({\bf v},q):=
\partial _\alpha\big(A^{*\alpha\beta} \partial _\beta {\bf v}\big)-\nabla q,
\\ \nonumber
\mbox{where }\quad &\mathbb A^*
=\{A^{*\alpha\beta}\}_{1\leq \alpha ,\beta \leq n},\, A^{*\alpha \beta }
=\left\{a_{ij}^{*\alpha\beta}\right\}_{1\leq i,j\leq n},\, a_{ij}^{*\alpha\beta} :=a^{\beta\alpha}_{ji}.
\end{align}
Note that our notation $A^{*\alpha\beta}$ coincides with the notation $(A^{\beta\alpha})^\top $ in \cite{Choi-Dong-Kim-JMFM}. Evidently, the coefficients of ${\boldsymbol{\mathcal L}}^*$ also satisfy conditions \eqref{mu} with the same constant $c$.

If $({\bf v},q)\!\in \!C^1(\overline \Omega_{\pm})^n\!\times \!C^0(\overline\Omega_{\pm})$, the classical conormal derivative operator ${\bf T}^{*c\pm}$ associated with $\boldsymbol{\mathcal L}^*$ is defined by
\begin{align*}
{\bf T}^{*c\pm}({\bf v},q):=\gamma_\pm \big(A^{*\alpha\beta} \partial_\beta {\bf v}\big)\nu_\alpha -{\gamma_\pm}q\,\boldsymbol\nu\quad\mbox{on }\partial\Omega.
\end{align*}
For more general functions ${\bf v}$ and $q$, we can introduce, similar to Definition~\ref{conormal-derivative-var-Brinkman}, the notion of formal and generalized conormal derivatives associated with $\boldsymbol{\mathcal L}^*$. 
\begin{definition}
\label{conormal-derivative-var-Brinkman*}
Let $p\in (1,\infty )$.
For any $({\bf v}_{\pm},q_{\pm},{\tilde{\bf g}}_{\pm})\!\in \!{\mathcal H}^1_{p}({\Omega_{\pm}})^n\!\times \! L_p({\Omega_{\pm}})\!\times \!\widetilde{\mathcal H}^{-1}_{p}({\Omega_{\pm}})^n$,
the {\em formal conormal derivatives}
${\bf T}^{*\pm}({\bf v}_{\pm},q_{\pm},{\tilde{\bf g}}_{\pm})\!\in \! B_{p,p}^{-\frac{1}{p}}(\partial\Omega )^n$
{are defined} as
\begin{multline}
\label{conormal-derivative-var-Brinkman-3-adj}
{\pm}\left\langle {\bf T}^{*\pm}({\bf v}_{\pm},q_{\pm} ;{\tilde{\bf g}}_{\pm}),\boldsymbol\Phi\right\rangle _{_{\!\partial\Omega  }}\!\!
:=\!\!{\big\langle A^{*\alpha \beta } \partial _\beta ({\bf v}_{\pm}),\partial_\alpha (\gamma^{-1}_{\pm}\boldsymbol\Phi)\big\rangle _{\Omega_{\pm}}}\\
-\left\langle {q_{\pm}},{\rm{div}}(\gamma^{-1}_{\pm}\boldsymbol\Phi)\right\rangle _{\Omega_{\pm}}
+\!\left\langle {\tilde{\bf g}}_{\pm},\gamma^{-1}_{\pm}\boldsymbol\Phi\right\rangle _{{\Omega_{\pm}}},\
 \forall \, \boldsymbol\Phi\!\in\!B_{p',p'}^{\frac{1}{p}}(\partial\Omega )^n.
\end{multline}
Moreover, if
$({\bf v}_{\pm},q_{\pm},{\tilde{\bf g}}_{\pm})\in
{\boldsymbol{\mathcal H}}^1_{p}({\Omega_{\pm}},{\boldsymbol{\mathcal L}}^*)$,
equation \eqref{conormal-derivative-var-Brinkman-3-adj} defines
the {\em generalized conormal derivatives}
${\bf T}^{*\pm}({\bf v}_{\pm},q_{\pm},{\tilde{\bf g}}_{\pm})
\in B_{p,p}^{-\frac{1}{p}}(\partial\Omega )^n$.
\end{definition}
\begin{lemma}
\label{lem-add1*}
Let $p\in (1,\infty )$. 
\begin{itemize}
\item[$(i)$]
The formal conormal derivative operator
${\bf T}^{*\pm}\!:\!{\mathcal H}^1_{p}({\Omega_{\pm}})^n\times L_p({\Omega_{\pm}})\!\times \!\widetilde{\mathcal H}^{-1}_{p}({\Omega_{\pm}})^n\!\to \! B_{p,p}^{-\frac{1}{p}}(\partial \Omega )^n$
is linear and continuous.
\item[$(ii)$]
The generalized conormal derivative operator ${\bf T}^{*\pm}:\boldsymbol{\mathcal H}^1_{p}(\Omega _{\pm },\boldsymbol{\mathcal L}^*)\to B_{p,p}^{-\frac{1}{p}}(\partial \Omega )^n$ is linear and continuous, and definition \eqref{conormal-derivative-var-Brinkman-3-adj} does not depend on the choice of a right inverse $\gamma _{\pm}^{-1}\!:\!B_{p',p'}^{\frac{1}{p}}(\partial \Omega )^n\!\to \!{\mathcal H}^1_{p'}(\Omega _\pm )^n$ of the trace operator $\gamma _{\pm }\!:\!{\mathcal H}^1_{p'}(\Omega )^n\!\to \! B_{p',p'}^{\frac{1}{p}}(\partial \Omega )^n$.
In addition, the following first Green identity holds for {any} ${\bf w}_\pm \!\in \!{\mathcal H}^1_{p'}({\Omega_{\pm}})^n$ and $({\bf v}_{\pm},q_{\pm},{\tilde{\bf g}}_{\pm})\in {\boldsymbol{\mathcal H}}^1_{p}({\Omega_{\pm}},\boldsymbol{\mathcal L}^*)$
\begin{align}
\label{Green-particular-p-adj}
\!\!\!\!\!\!\!\!\!\!\!\!\!\!\!{\pm}\big\langle {\bf T}^{*\pm}({\bf v}_{\pm},q_{\pm};{\tilde{\bf g}}_{\pm}),
\gamma_{\pm}{\bf w}_\pm\big\rangle _{{\partial\Omega }}
\!
&=\!\big\langle A^{*\alpha \beta }\partial _\beta({\bf v}_\pm),\partial_\alpha({\bf w}_{\pm})\big\rangle _{\Omega_{\pm}}\!\!
-\!\langle {q_{\pm}},{\rm{div}}\, {\bf w}_\pm \rangle _{\Omega_{\pm}}\!+\!\langle {\tilde{\bf g}}_{\pm},{\bf w}_\pm \rangle _{{\Omega_{\pm}}}\nonumber\\
&=\!\big\langle A^{\alpha \beta }\partial _\beta({\bf w}_{\pm}),\partial_\alpha({\bf v}_\pm)\big\rangle _{\Omega_{\pm}}\!\!
-\!\langle {q_{\pm}},{\rm{div}}\, {\bf w}_\pm \rangle _{\Omega_{\pm}}\!+\!\langle {\tilde{\bf g}}_{\pm},{\bf w}_\pm \rangle _{{\Omega_{\pm}}}.
\end{align}
\end{itemize}
\end{lemma}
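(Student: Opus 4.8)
\emph{Plan.} Because $\mathbb A^*$ satisfies the boundedness and strong ellipticity conditions \eqref{mu} with the same constant $c_{\mathbb A}$ as $\mathbb A$, and because \eqref{conormal-derivative-var-Brinkman-3-adj} is obtained from \eqref{conormal-derivative-var-Brinkman-3} simply by replacing $A^{\alpha\beta}$ with $A^{*\alpha\beta}$, the statement (i) and the first half of (ii) follow by transcribing the proof of Lemma~\ref{lem-add1}. The only genuinely new ingredient is the second equality in \eqref{Green-particular-p-adj}, which I will deduce from the index symmetry $a^{*\alpha\beta}_{ij}=a^{\beta\alpha}_{ji}$.

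\emph{Proof of (i).} For $\boldsymbol\Phi\in B^{\frac1p}_{p',p'}(\partial\Omega)^n$, I would estimate the three terms on the right of \eqref{conormal-derivative-var-Brinkman-3-adj}: the first by $c_{\mathbb A}\,\|\nabla{\bf v}_\pm\|_{L_p(\Omega_\pm)}\,\|\nabla(\gamma^{-1}_\pm\boldsymbol\Phi)\|_{L_{p'}(\Omega_\pm)}$ (from $|a^{*\alpha\beta}_{ij}|\le c_{\mathbb A}$), the second by $\|q_\pm\|_{L_p(\Omega_\pm)}\,\|{\rm{div}}(\gamma^{-1}_\pm\boldsymbol\Phi)\|_{L_{p'}(\Omega_\pm)}$, and the third by the $\widetilde{\mathcal H}^{-1}_p(\Omega_\pm)^n$--${\mathcal H}^1_{p'}(\Omega_\pm)^n$ duality. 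Since $\gamma^{-1}_\pm:B^{\frac1p}_{p',p'}(\partial\Omega)^n\to{\mathcal H}^1_{p'}(\Omega_\pm)^n$ is bounded, the right-hand side of \eqref{conormal-derivative-var-Brinkman-3-adj} is a bounded linear functional of $\boldsymbol\Phi$ with norm controlled by $\|{\bf v}_\pm\|_{{\mathcal H}^1_p(\Omega_\pm)}+\|q_\pm\|_{L_p(\Omega_\pm)}+\|\tilde{\bf g}_\pm\|_{\widetilde{\mathcal H}^{-1}_p(\Omega_\pm)}$; hence ${\bf T}^{*\pm}({\bf v}_\pm,q_\pm;\tilde{\bf g}_\pm)\in B^{-\frac1p}_{p,p}(\partial\Omega)^n$, and the operator so defined is linear (the right-hand side being linear in the data) and continuous.

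\emph{Proof of (ii).} Fix $({\bf v}_\pm,q_\pm,\tilde{\bf g}_\pm)\in\boldsymbol{\mathcal H}^1_p(\Omega_\pm,\boldsymbol{\mathcal L}^*)$. Testing $\boldsymbol{\mathcal L}^*({\bf v}_\pm,q_\pm)=\tilde{\bf g}_\pm|_{\Omega_\pm}$ against $\boldsymbol\varphi\in{\mathcal D}(\Omega_\pm)^n$ and integrating by parts yields
\[
\big\langle A^{*\alpha\beta}\partial_\beta{\bf v}_\pm,\partial_\alpha\boldsymbol\varphi\big\rangle_{\Omega_\pm}-\big\langle q_\pm,{\rm{div}}\,\boldsymbol\varphi\big\rangle_{\Omega_\pm}+\big\langle\tilde{\bf g}_\pm,\boldsymbol\varphi\big\rangle_{\Omega_\pm}=0 .
\]
By density of ${\mathcal D}(\Omega_\pm)^n$ in $\mathring{\mathcal H}^1_{p'}(\Omega_\pm)^n$ and the continuity of the three terms in ${\bf w}\in{\mathcal H}^1_{p'}(\Omega_\pm)^n$, this extends to all ${\bf w}\in\mathring{\mathcal H}^1_{p'}(\Omega_\pm)^n$. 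If $\gamma^{-1}_\pm$ and $\widetilde\gamma^{-1}_\pm$ are two bounded right inverses of $\gamma_\pm$, then $\gamma^{-1}_\pm\boldsymbol\Phi-\widetilde\gamma^{-1}_\pm\boldsymbol\Phi$ has vanishing trace, hence lies in $\mathring{\mathcal H}^1_{p'}(\Omega_\pm)^n$ by \eqref{property} (and its $\Omega_+$ analogue); the extended identity then shows that the two values furnished by \eqref{conormal-derivative-var-Brinkman-3-adj} agree, so the generalized conormal derivative is well defined. For the first equality in \eqref{Green-particular-p-adj}, given ${\bf w}_\pm\in{\mathcal H}^1_{p'}(\Omega_\pm)^n$ I would write ${\bf w}_\pm=\gamma^{-1}_\pm(\gamma_\pm{\bf w}_\pm)+{\bf w}^{0}_{\pm}$ with ${\bf w}^{0}_{\pm}\in\mathring{\mathcal H}^1_{p'}(\Omega_\pm)^n$, apply \eqref{conormal-derivative-var-Brinkman-3-adj} with $\boldsymbol\Phi=\gamma_\pm{\bf w}_\pm$ to the first summand and the extended vanishing identity to ${\bf w}^{0}_{\pm}$, and add; linearity and continuity of ${\bf T}^{*\pm}$ on $\boldsymbol{\mathcal H}^1_p(\Omega_\pm,\boldsymbol{\mathcal L}^*)$ then follow from the estimates in (i). Finally, the second equality in \eqref{Green-particular-p-adj} is a relabeling of summation indices: denoting by $v_j,w_j$ the components of ${\bf v}_\pm,{\bf w}_\pm$ and using $a^{*\alpha\beta}_{ij}=a^{\beta\alpha}_{ji}$,
\[
\big\langle A^{*\alpha\beta}\partial_\beta{\bf v}_\pm,\partial_\alpha{\bf w}_\pm\big\rangle_{\Omega_\pm}
=\big\langle a^{\beta\alpha}_{ji}\,\partial_\beta v_j,\partial_\alpha w_i\big\rangle_{\Omega_\pm}
=\big\langle a^{\alpha\beta}_{ij}\,\partial_\beta w_j,\partial_\alpha v_i\big\rangle_{\Omega_\pm}
=\big\langle A^{\alpha\beta}\partial_\beta{\bf w}_\pm,\partial_\alpha{\bf v}_\pm\big\rangle_{\Omega_\pm},
\]
where in the middle step the dummy indices $\alpha\leftrightarrow\beta$ and $i\leftrightarrow j$ are interchanged.

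\emph{Main obstacle.} There is essentially no deep difficulty here: the whole argument is a transcription of the proof of Lemma~\ref{lem-add1} with $\mathbb A$ replaced by $\mathbb A^*$, supplemented by the trivial index identity above. The points that require care are keeping the duality pairings consistent (so that $\tilde{\bf g}_\pm\in\widetilde{\mathcal H}^{-1}_p(\Omega_\pm)^n$ is paired against ${\mathcal H}^1_{p'}(\Omega_\pm)^n$ and $B^{-\frac1p}_{p,p}(\partial\Omega)^n$ is used as the dual of $B^{\frac1p}_{p',p'}(\partial\Omega)^n$), and invoking the zero-trace characterization $\mathring{\mathcal H}^1_{p'}(\Omega_-)=\{v\in{\mathcal H}^1_{p'}(\Omega_-):\gamma_-v=0\}$ from \eqref{property} together with its $\Omega_+$ counterpart, which is exactly what renders the generalized conormal derivative independent of the chosen right inverse.
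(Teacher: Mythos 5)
Your proof is correct and follows exactly the route the paper intends: the paper states Lemma \ref{lem-add1*} without proof as the direct analogue of Lemma \ref{lem-add1}, whose own proof is omitted with a pointer to the standard trace/conormal-derivative arguments, and your write-up (boundedness of each term via the right inverse of the trace, well-definedness via the zero-trace characterization \eqref{property} and density of test functions, the decomposition ${\bf w}_\pm=\gamma^{-1}_\pm(\gamma_\pm{\bf w}_\pm)+{\bf w}^0_\pm$ for the Green identity, and the dummy-index swap using $a^{*\alpha\beta}_{ij}=a^{\beta\alpha}_{ji}$ for the second equality) is precisely that argument carried out in detail. No gaps.
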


Lemma \ref{lem-add1*} implies the following analogue of Lemma~\ref{lemma-add-new}.
\begin{lemma}
\label{jump-conormal-derivative-1}
Let $p\in (1,\infty )$, $({\bf v}_{\pm},q_{\pm},{\tilde{\bf g}}_{\pm})\in
{\boldsymbol{\mathcal H}}^1_{p}({\Omega_{\pm}},{\boldsymbol{\mathcal L}}^*)$,
and ${\bf w}\in {\mathcal H}^1_{p'}(\mathbb R^n)^n$. Let ${\bf v}$ and $q$ be the couples $\{{\bf v}_+,{\bf v}_{-}\}$ and $\{q_+,q_{-}\}$.
Then
\begin{align}
\label{jump-conormal-derivative-1f*}
\big\langle [{\bf T}^*({\bf v},q;{\bf g})],\gamma{\bf w}\big\rangle _{_{\partial\Omega }}
=&\big\langle A^{*\alpha \beta }\partial_\beta({\bf v}_+),\partial_\alpha({\bf w})\big\rangle _{\Omega_+}
+\big\langle A^{*\alpha \beta }\partial_\beta({\bf v}_-),\partial_\alpha({\bf w})\big\rangle _{\Omega_-}\nonumber\\
&
-\langle q_+,{\rm{div}}\, {\bf w}\rangle_{\Omega_+}
-\langle q_+,{\rm{div}}\, {\bf w}\rangle_{\Omega_-}
+\left\langle\tilde{\bf g}_+,{\bf w}\right\rangle_{\Omega_+}
+\left\langle\tilde{\bf g}_-,{\bf w}\right\rangle_{\Omega_-}\,.
\end{align}
Moreover, if
$({\bf v}_{\pm},q_{\pm},{\tilde{\bf 0}}_{\pm})\in
{\boldsymbol{\mathcal H}}^1_{p}({\Omega_{\pm}},{\boldsymbol{\mathcal L}}^*)$,
then
\begin{align}
\label{jump-conormal-derivative-1-adj}
\big\langle [{\bf T}^*({\bf v},q)],\gamma{\bf w}\big\rangle _{_{\partial\Omega }}
=&\big\langle A^{*\alpha \beta }\partial_\beta({\bf v}_+),\partial_\alpha({\bf w})\big\rangle _{\Omega_+}
+\big\langle A^{*\alpha \beta }\partial_\beta({\bf v}_-),\partial_\alpha({\bf w})\big\rangle _{\Omega_-}\nonumber\\
&
-\langle q_+,{\rm{div}}\, {\bf w}\rangle_{\Omega_+}
-\langle q_{-},{\rm{div}}\, {\bf w}\rangle_{\Omega_-}.
\end{align}
\end{lemma}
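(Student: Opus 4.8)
The plan is to follow verbatim the strategy used for Lemma~\ref{lemma-add-new}, now invoking the adjoint first Green identity \eqref{Green-particular-p-adj} from Lemma~\ref{lem-add1*}$(ii)$ in place of \eqref{Green-particular-p}. Here the jump $[{\bf T}^*({\bf v},q;{\bf g})]$ is understood, in analogy with \eqref{jt}, as ${\bf T}^{*+}({\bf v}_+,q_+;\tilde{\bf g}_+)-{\bf T}^{*-}({\bf v}_-,q_-;\tilde{\bf g}_-)$, and ${\bf g}$ stands for the couple $\{\tilde{\bf g}_+,\tilde{\bf g}_-\}$.

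First I would observe that, for ${\bf w}\in{\mathcal H}^1_{p'}(\mathbb R^n)^n$, the restrictions ${\bf w}|_{\Omega_\pm}$ belong to ${\mathcal H}^1_{p'}(\Omega_\pm)^n$ (by the norm equivalence for ${\mathcal H}^1_{p'}(\mathbb R^n\setminus\partial\Omega)$ recorded in Section~\ref{S2.2}), and that ${\gamma}_+{\bf w}={\gamma}_-{\bf w}={\gamma}{\bf w}$ on $\partial\Omega$, since $[{\gamma}({\bf w})]={\bf 0}$ whenever ${\bf w}\in{\mathcal H}^1_{p'}(\mathbb R^n)^n$ (the property recalled right after \eqref{standard-weight-p}). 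This is exactly the single remark that settled Lemma~\ref{lemma-add-new}.

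Next, since $({\bf v}_{\pm},q_{\pm},\tilde{\bf g}_{\pm})\in{\boldsymbol{\mathcal H}}^1_{p}(\Omega_{\pm},{\boldsymbol{\mathcal L}}^*)$, I would apply \eqref{Green-particular-p-adj} on $\Omega_+$ with ${\bf w}_+:={\bf w}|_{\Omega_+}$, and on $\Omega_-$ with ${\bf w}_-:={\bf w}|_{\Omega_-}$. Replacing $\gamma_\pm{\bf w}_\pm$ by $\gamma{\bf w}$ in the boundary dualities and adding the resulting two identities, the left-hand sides combine into $\big\langle{\bf T}^{*+}({\bf v}_+,q_+;\tilde{\bf g}_+)-{\bf T}^{*-}({\bf v}_-,q_-;\tilde{\bf g}_-),\gamma{\bf w}\big\rangle_{\partial\Omega}=\big\langle[{\bf T}^*({\bf v},q;{\bf g})],\gamma{\bf w}\big\rangle_{\partial\Omega}$, while the right-hand sides add to the sum over $\Omega_+$ and $\Omega_-$ of the bilinear-form terms, the divergence terms, and the $\tilde{\bf g}_\pm$-terms; this is precisely \eqref{jump-conormal-derivative-1f*}. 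Formula \eqref{jump-conormal-derivative-1-adj} then follows by specialising $\tilde{\bf g}_\pm={\bf 0}$ and recalling the convention ${\bf T}^{*\pm}({\bf v}_\pm,q_\pm):={\bf T}^{*\pm}({\bf v}_\pm,q_\pm;{\bf 0})$, the analogue of the one fixed for $\boldsymbol{\mathcal L}$.

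There is no real obstacle: the only subtlety worth a sentence is that \eqref{Green-particular-p-adj} may be invoked here because the generalized adjoint conormal derivative in \eqref{conormal-derivative-var-Brinkman-3-adj} is independent of the choice of the right inverse of the trace operator (Lemma~\ref{lem-add1*}$(ii)$), and because ${\bf w}|_{\Omega_\pm}$ genuinely lies in the local weighted space ${\mathcal H}^1_{p'}(\Omega_\pm)^n$; the rest is linearity and additivity of the pairings $\langle\cdot,\cdot\rangle_{\Omega_\pm}$ over the disjoint sets $\Omega_+$ and $\Omega_-$.
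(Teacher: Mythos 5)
Your proposal is correct and follows essentially the same route as the paper, which derives the lemma directly from the adjoint first Green identity \eqref{Green-particular-p-adj} together with the single observation that $\gamma_+{\bf w}=\gamma_-{\bf w}=\gamma{\bf w}$ for ${\bf w}\in{\mathcal H}^1_{p'}(\mathbb R^n)^n$, exactly as in the proof of Lemma~\ref{lemma-add-new}. (Incidentally, your summation correctly produces $-\langle q_-,{\rm{div}}\,{\bf w}\rangle_{\Omega_-}$ in the $\Omega_-$ divergence term of \eqref{jump-conormal-derivative-1f*}, where the paper's displayed formula has a typographical repetition of $q_+$.)
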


\subsection{Abstract mixed variational formulations and well-posedness results}
\label{B-B-theory}

The main role in our analysis is played by the following well-posedness result
from \cite{Babuska}, \cite[Theorem 1.1]{Brezzi},
(cf., also \cite[Theorem 2.34]{Ern-Gu}, \cite{Brezzi-Fortin} and \cite[\S 4]{Gi-Ra}).
\begin{theorem}
\label{B-B}
Let $X$ and ${\mathcal M}$ be two real Hilbert spaces. Let $a(\cdot ,\cdot):X\times X\to {\mathbb R}$ and $b(\cdot ,\cdot):X\times {\mathcal M}\to {\mathbb R}$ be bounded bilinear forms. Let $f\in X'$ and $g\in {\mathcal M}'$. Let $V$ be the subspace of $X$ defined by
\begin{align}
\label{V}
V:=\left\{v\in X: b(v,q)=0,\ \forall \, q\in {\mathcal M}\right\}.
\end{align}
Assume that $a(\cdot ,\cdot ):V\times V\to {\mathbb R}$ is coercive, which means that there exists a constant $c_a>0$ such that
\begin{align}
\label{coercive}
a(w,w)\geq c_a\|w\|_X^2,\ \ \forall \, w\in V,
\end{align}
and that $b(\cdot ,\cdot)\!:\!X\!\times \!{\mathcal M}\!\to \!{\mathbb R}$ satisfies
the condition
\begin{align}
\label{inf-sup-sm}
&\inf _{q\in {\mathcal M}\setminus \{0\}}\sup_{v\in X\setminus \{0\}}\frac{b(v,q)}{\|v\|_X\|q\|_{\mathcal M}}\geq \beta \,,
\end{align}
with some constant $\beta>0$. Then the mixed variational problem
\begin{equation}
\label{mixed-variational}
\left\{\begin{array}{ll}
a(u,v)+b(v,p)&=f(v), \ \ \forall \, v\in X,\\
b(u,q)&=g(q), \ \ \forall \, q\in {\mathcal M},
\end{array}
\right.
\end{equation}
with the unknown $(u,p)\in X\times {\mathcal M}$, is well-posed, which means that \eqref{mixed-variational} has a unique solution $(u,p)$ in $X\times {\mathcal M}$ and there exists a constant $C>0$ depending on $\beta$ and  $c_a$, such that 
\begin{align}
\label{mixed-C}
\|u\|_{X}+\|p\|_{{\mathcal M}}\leq C\left(\|f\|_{X'}+\|g\|_{{\mathcal M}'}\right).
\end{align}
\end{theorem}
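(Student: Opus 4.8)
The plan is to reduce the mixed problem \eqref{mixed-variational} to a pair of well-posed problems on $V$ and on its orthogonal complement, using the fact that the inf-sup condition \eqref{inf-sup-sm} is equivalent to surjectivity of the operator $B:X\to\mathcal M'$ associated with $b$ (and of its adjoint onto an appropriate subspace). First I would introduce the bounded linear operators $A:X\to X'$, $\langle Av,w\rangle=a(v,w)$, and $B:X\to\mathcal M'$, $\langle Bv,q\rangle=b(v,q)$, with adjoint $B':\mathcal M\to X'$. The kernel of $B$ is exactly $V$ as defined in \eqref{V}. A standard closed-range argument shows that \eqref{inf-sup-sm} implies $B:V^{\perp}\to\mathcal M'$ is an isomorphism, and dually that $B':\mathcal M\to V^{0}$ (the annihilator of $V$ in $X'$) is an isomorphism, with the norm of the inverses controlled by $1/\beta$.

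Next I would split the unknown as $u=u_0+u_1$ with $u_1\in V^{\perp}$ determined by $Bu_1=g$, i.e. $b(u_1,q)=g(q)$ for all $q\in\mathcal M$, using the isomorphism just established; this gives $\|u_1\|_X\le\beta^{-1}\|g\|_{\mathcal M'}$. Substituting into the first line of \eqref{mixed-variational} and restricting the test functions to $v\in V$ (so that the term $b(v,p)$ drops out), the problem for $u_0\in V$ becomes $a(u_0,v)=f(v)-a(u_1,v)$ for all $v\in V$. Since $a(\cdot,\cdot)$ is bounded and coercive on $V\times V$ by \eqref{coercive}, the Lax–Milgram lemma yields a unique $u_0\in V$ with $\|u_0\|_X\le c_a^{-1}\big(\|f\|_{X'}+\|a\|\,\|u_1\|_X\big)$. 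Setting $u=u_0+u_1$ then satisfies the second equation of \eqref{mixed-variational} and the first equation tested against all $v\in V$.

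It remains to recover $p\in\mathcal M$ so that the first equation holds for all $v\in X$, not just $v\in V$. Define the functional $\ell\in X'$ by $\ell(v):=f(v)-a(u,v)$; by construction $\ell$ vanishes on $V$, so $\ell\in V^{0}$. Using that $B':\mathcal M\to V^{0}$ is an isomorphism, there is a unique $p\in\mathcal M$ with $B'p=\ell$, i.e. $b(v,p)=f(v)-a(u,v)$ for all $v\in X$, which is exactly the first line of \eqref{mixed-variational}; moreover $\|p\|_{\mathcal M}\le\beta^{-1}\|\ell\|_{X'}\le\beta^{-1}\big(\|f\|_{X'}+\|a\|\,\|u\|_X\big)$. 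Uniqueness of $(u,p)$ follows by linearity: if $f=g=0$ then testing the first equation against $v=u\in X$ and using $b(u,p)=0$ (since $u\in V$ when $g=0$) gives $a(u,u)=0$, hence $u=0$ by coercivity, and then $B'p=0$ forces $p=0$. Collecting the three norm estimates gives \eqref{mixed-C} with $C$ depending only on $\beta$, $c_a$ and $\|a\|$. The main technical point — the one place where some care is needed rather than pure bookkeeping — is the equivalence between the inf-sup condition \eqref{inf-sup-sm} and the isomorphism properties of $B$ on $V^{\perp}$ and of $B'$ onto $V^{0}$; this rests on the closed range theorem and the observation that \eqref{inf-sup-sm} gives a lower bound $\|Bv\|_{\mathcal M'}\ge\beta\|v\|_X$ for $v\in V^{\perp}$. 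Since this is the classical result of Babuška and Brezzi, I would simply cite \cite{Babuska}, \cite{Brezzi} (see also \cite[Theorem 2.34]{Ern-Gu}) rather than reproduce the full argument.
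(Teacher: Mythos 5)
Your proposal is correct and is precisely the classical Babu\v{s}ka--Brezzi argument (splitting $u=u_0+u_1$ with $u_1\in V^\perp$ solving $Bu_1=g$, Lax--Milgram on $V$ for $u_0$, and recovery of $p$ via the isomorphism $B':\mathcal M\to V^0$). The paper itself offers no proof of this theorem, simply citing \cite{Babuska}, \cite[Theorem 1.1]{Brezzi} and \cite[Theorem 2.34]{Ern-Gu}, so your sketch is essentially the same approach --- namely the standard argument contained in those references.
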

We will also need the following result (see \cite[Theorem A.56, Remark 2.7]{Ern-Gu}). 
\begin{lemma}
\label{surj-inj-inf-sup}
Let $X$ and ${\mathcal M}$ be reflexive Banach spaces. Let $b(\cdot ,\cdot):X\times {\mathcal M}\to {\mathbb R}$ be a bounded bilinear form. 
Let $B:X\to {\mathcal M}'$ and $B^{*}:{\mathcal M}\to X'$ be the linear bounded operators given by
\begin{align}
\label{B}
&\langle Bv,q\rangle =b(v,q),\ \langle v,B^*q\rangle =\langle Bv,q\rangle ,\ \forall \, v\in X,\ \forall \, q\in {\mathcal M},
\end{align}
where $\langle \cdot ,\cdot \rangle :=\!_{X'}\langle \cdot ,\cdot \rangle _X$ denotes the duality pairing of the dual spaces $X'$ and $X$. The duality pairing between ${\mathcal M}'$ and ${\mathcal M}$ is also denoted by $\langle \cdot ,\cdot \rangle $. Then the following assertions are equivalent:
\begin{itemize}
\item[$(i)$]
There exists a constant $\beta >0$ such that $b(\cdot ,\cdot)$ satisfies the inf-sup condition \eqref{inf-sup-sm}.
\item[$(ii)$]
The map $B:{X/V}\to {\mathcal M}'$ is an isomorphism and
$\|Bw\|_{{\mathcal M}'}\geq \beta \|w\|_{X/V},$ for any $w\in X/V.$
\end{itemize}
\end{lemma}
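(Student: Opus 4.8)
\quad The plan is to translate the inf-sup condition \eqref{inf-sup-sm} into the equivalent statement that the adjoint operator $B^{*}$ is bounded below by $\beta$, and then to read off the stated properties of $B$ on the quotient $X/V$ from the closed range theorem, using reflexivity (of $\mathcal M$) to promote ``closed range'' to ``surjective'' and to keep track of the sharp constant $\beta$.

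First I would rewrite the inner supremum in \eqref{inf-sup-sm}. Since $b(v,q)=\langle v,B^{*}q\rangle$ by \eqref{B} and the form is real, one checks $\sup_{v\in X\setminus\{0\}}b(v,q)/\|v\|_{X}=\|B^{*}q\|_{X'}$, so that \eqref{inf-sup-sm} is equivalent to
\[
\|B^{*}q\|_{X'}\ \geq\ \beta\,\|q\|_{\mathcal M}\qquad\text{for every } q\in\mathcal M .
\]
This says $B^{*}$ is injective with closed range; by the closed range theorem the range of $B$ is closed and the range of $B^{*}$ equals $(\ker B)^{\perp}=V^{\perp}$. To see that $B$ is onto $\mathcal M'$ it then suffices to check that its range is dense: a functional $\varphi\in\mathcal M''$ vanishing on the range of $B$ is, by reflexivity of $\mathcal M$, represented by some $q\in\mathcal M$, whence $\langle x,B^{*}q\rangle=\langle Bx,q\rangle=0$ for all $x\in X$, so $B^{*}q=0$ and $q=0$. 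Thus $B$ is surjective. Let $\widehat B\colon X/V\to\mathcal M'$ be the canonical map induced by $B$; it is well defined and injective because $V=\ker B$, and being a continuous bijection of Banach spaces it is a topological isomorphism by the open mapping theorem. This proves the isomorphism assertion in $(ii)$.

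For the quantitative bound I would pass to adjoints once more. Under the canonical isometric identification $(X/V)'\cong V^{\perp}$ and the reflexivity of $\mathcal M$, the adjoint $\widehat B^{*}\colon\mathcal M\to(X/V)'$ is identified with $B^{*}\colon\mathcal M\to V^{\perp}$, isometrically: $\|\widehat B^{*}q\|_{(X/V)'}=\|B^{*}q\|_{X'}$. Since $\widehat B$ is an isomorphism, so is $\widehat B^{*}$, and $\|\widehat B^{-1}\|=\|(\widehat B^{*})^{-1}\|$; together with the displayed estimate this yields
\[
\|\widehat B^{-1}\|_{\mathcal M'\to X/V}=\Big(\inf_{q\in\mathcal M\setminus\{0\}}\frac{\|B^{*}q\|_{X'}}{\|q\|_{\mathcal M}}\Big)^{-1}\leq\frac{1}{\beta},
\]
which is exactly $\|\widehat B w\|_{\mathcal M'}\geq\beta\|w\|_{X/V}$ for all $w\in X/V$, completing $(i)\Rightarrow(ii)$. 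For $(ii)\Rightarrow(i)$ I would run the same chain in reverse: if $\widehat B$ is an isomorphism with $\|\widehat B^{-1}\|\leq1/\beta$, then $\|B^{*}q\|_{X'}=\|\widehat B^{*}q\|_{(X/V)'}\geq\beta\|q\|_{\mathcal M}$, and undoing the supremum identity recovers \eqref{inf-sup-sm}.

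The delicate point is the passage from ``$B^{*}$ bounded below'' to ``$B$ surjective onto $\mathcal M'$ with the sharp constant on $X/V$'': boundedness below of $B^{*}$ only gives closed range for $B$, so surjectivity needs the separate density argument in which the reflexivity of $\mathcal M$ is essential, and recovering precisely $\beta$ — rather than an unspecified constant produced by the open mapping theorem — requires the isometry $(X/V)'\cong V^{\perp}$ together with $\|\widehat B^{-1}\|=\|(\widehat B^{*})^{-1}\|$. The remaining verifications — that $\widehat B$ is well defined, that $\widehat B^{*}$ is the asserted restriction of $B^{*}$, and the elementary identity for $\|B^{*}q\|_{X'}$ — are routine.
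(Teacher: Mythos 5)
Your proof is correct. Note that the paper does not actually prove Lemma \ref{surj-inj-inf-sup}: it is quoted from Ern and Guermond \cite[Theorem A.56, Remark 2.7]{Ern-Gu} without proof, so there is no in-paper argument to compare against. Your argument is essentially the standard one behind that cited result: the identity $\sup_{v\neq 0} b(v,q)/\|v\|_X=\|B^*q\|_{X'}$ converts the inf-sup condition into a lower bound for $B^*$, the closed range theorem plus reflexivity of $\mathcal M$ gives surjectivity of $B$, and the isometry $(X/V)'\cong V^{\perp}$ together with $\|\widehat B^{-1}\|=\|(\widehat B^{*})^{-1}\|$ recovers the sharp constant $\beta$ rather than an unquantified one from the open mapping theorem. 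All the steps you flag as delicate are handled correctly, so nothing further is needed.
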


\section{Volume and layer potential operators for the $L_{\infty }$ coefficient Stokes system in $L_p$-based Sobolev and Besov spaces}\label{N-S-D}
In the sequel, $\Omega _+\subset \!{\mathbb R}^n$ ($n\geq 3)$ is a bounded Lipschitz domain with connected boundary $\partial \Omega $, and $\Omega _{-}\!:=\!{\mathbb R}^n\setminus \overline{\Omega }$. 
\subsection{Weak solution of the Stokes system with $L_{\infty }$ coefficients in ${\mathbb R}^n$.}
The main role in our analysis is played by the following result (see also \cite[Lemma 4.1]{K-M-W-1} for $p=2$). 
\begin{lemma}
\label{lemma-a47-1-Stokes}
Let $\mathbb A$ satisfy conditions \eqref{Stokes-1} and \eqref{mu}.
Let
$p\in (1,\infty )$,
and
$a_{\mathbb R^n}:{\mathcal H}^1_{p}({\mathbb R}^n)^n\times {\mathcal H}^1_{p'}({\mathbb R}^n)^n\!\to \!{\mathbb R}$,
$b_{\mathbb R^n}:{\mathcal H}^1_{p}({\mathbb R}^n)^n\times L_{p'}({\mathbb R}^n)\!\to \!{\mathbb R}$ be the bilinear forms
\begin{align}
\label{a-v}
&a_{{\mathbb R}^n}({\bf u},{\bf v}):=\big\langle A^{\alpha \beta }\partial _\beta {\bf u},
\partial _\alpha {\bf u}\big\rangle _{{\mathbb R}^n},\ \forall \, {\bf u}\in {\mathcal H}^1_{p}({\mathbb R}^n)^n,\, {\bf v}\in {\mathcal H}^1_{p'}({\mathbb R}^n)^n\,,\\
\label{b-v}
&b_{\mathbb R^n}({\bf v},q):=-\langle {\rm{div}}\, {\bf v},q\rangle _{{\mathbb R}^n},\ \forall \, {\bf v}\in {\mathcal H}^1_{p}({\mathbb R}^n)^n,\ \forall \, q\in L_{p'}({\mathbb R}^n)\,.
\end{align}
Then there exists $p_*\in (2,\infty )$ such that for any $p\in\mathcal R(p_*,n)$, where
\begin{align}
\label{p*}
\mathcal R(p_*,n):=\left(\frac{p_*}{p_*-1},p_*\right)\cap \left(\frac{n}{n-1},{n}\right)
\end{align}
and for all given data $\boldsymbol\xi \in {\mathcal H}^{-1}_{p}({\mathbb R}^n)^n$ and $\zeta \in L_p({\mathbb R}^n)$, the mixed variational formulation
\begin{align}
\label{transmission-S-variational-dl-3-equiv-0}
\left\{\begin{array}{ll}
a_{{\mathbb R}^n}({\bf u},{\bf v})+b_{\mathbb R^n}({\bf v},\pi )=\langle \boldsymbol\xi ,{\bf v}\rangle _{{\mathbb R}^n},\ \forall \, {\bf v}\in {\mathcal H}^1_{p'}({\mathbb R}^n)^n,\\
b_{\mathbb R^n}({\bf u},q)=\langle \zeta ,q\rangle _{{\mathbb R}^n},\ \forall \, q\in L_{p'}({\mathbb R}^n)
\end{array}
\right.
\end{align}
is well-posed, which means that \eqref{transmission-S-variational-dl-3-equiv-0} has a unique solution $({\bf u},\pi )\in {{\mathcal H}^1_{p}({\mathbb R}^n)^n}\times L_p({\mathbb R}^n)$ and there exists a constant $C=C(c_{\mathbb A},p,n)>0$ such that
\begin{align}
\label{estimate-1-wp-S}
\|{\bf u}\|_{{\mathcal H}^1_{p}({\mathbb R}^n)^n}+\|\pi \|_{L_p({\mathbb R}^n)}\leq
C\left\{\|\boldsymbol\xi \|_{{\mathcal H}^{-1}_{p}({\mathbb R}^n)^n}+\|\zeta \|_{L_p({\mathbb R}^n)}\right\}.
\end{align}
\end{lemma}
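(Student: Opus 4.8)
The plan is to invoke the abstract Babuška–Brezzi machinery of Theorem~\ref{B-B} and Lemma~\ref{surj-inj-inf-sup}, first in the Hilbert-space case $p=2$ and then to propagate well-posedness to an open interval of exponents around $2$ by a Šneiberg-type stability argument on the complex interpolation scale $\{{\mathcal H}^1_p({\mathbb R}^n)\}_{1<p<n}$ recorded in \eqref{int-weight}. For $p=2$ take $X={\mathcal H}^1({\mathbb R}^n)^n$ and ${\mathcal M}=L_2({\mathbb R}^n)$, with $a_{{\mathbb R}^n}$ and $b_{{\mathbb R}^n}$ as in \eqref{a-v}–\eqref{b-v}. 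Boundedness of both forms is immediate from $\mathbb A\in L_\infty({\mathbb R}^n)^{n^4}$, from the semi-norm estimate $\|\nabla{\bf u}\|_{L_2}\le\|{\bf u}\|_{{\mathcal H}^1}$, and — for $b_{{\mathbb R}^n}$ — from $\|{\rm{div}}\,{\bf v}\|_{L_2}\le\sqrt n\,\|\nabla{\bf v}\|_{L_2}$. The constraint space is $V=\{{\bf v}\in{\mathcal H}^1({\mathbb R}^n)^n:{\rm{div}}\,{\bf v}=0\}$, the divergence-free subspace.

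\smallskip

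First I would verify coercivity of $a_{{\mathbb R}^n}$ on $V\times V$. Here the strong ellipticity condition \eqref{mu} gives, for any ${\bf u}\in{\mathcal H}^1({\mathbb R}^n)^n$,
\begin{align*}
a_{{\mathbb R}^n}({\bf u},{\bf u})=\int_{{\mathbb R}^n}a_{ij}^{\alpha\beta}\,\partial_\beta u_j\,\partial_\alpha u_i\,dx
\ \geq\ c_{\mathbb A}^{-1}\int_{{\mathbb R}^n}|\nabla{\bf u}|^2\,dx
=c_{\mathbb A}^{-1}\,|{\bf u}|_{{\mathcal H}^1({\mathbb R}^n)^n}^2.
\end{align*}
Since $1<2<n$ (as $n\ge3$), the semi-norm $|\cdot|_{{\mathcal H}^1({\mathbb R}^n)}=\|\nabla\cdot\|_{L_2}$ is equivalent to the full norm $\|\cdot\|_{{\mathcal H}^1({\mathbb R}^n)}$ by the statement preceding \eqref{Kozono} (the weighted Hardy/Sobolev inequality), and we obtain $a_{{\mathbb R}^n}({\bf u},{\bf u})\geq c_a\|{\bf u}\|_{{\mathcal H}^1({\mathbb R}^n)^n}^2$ with $c_a$ depending only on $c_{\mathbb A}$ and $n$; in particular this holds on $V$. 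I emphasise that coercivity holds on the whole space, not merely on $V$, which simplifies the later extrapolation.

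\smallskip

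Next I would establish the inf-sup condition \eqref{inf-sup-sm} for $b_{{\mathbb R}^n}$. By \eqref{Kozono} and the surjectivity statement right after \eqref{Kozono-1}, the divergence operator ${\rm{div}}:{\mathcal H}^1({\mathbb R}^n)^n\to L_2({\mathbb R}^n)$ is onto (Kozono–Sohr), hence bounded below on the quotient $X/V$ by the open mapping theorem; equivalently, via Lemma~\ref{surj-inj-inf-sup}, this is exactly the required inf-sup bound with some $\beta=\beta(n)>0$. Theorem~\ref{B-B} then yields that \eqref{transmission-S-variational-dl-3-equiv-0} is well-posed for $p=2$ with the estimate \eqref{estimate-1-wp-S}, the constant depending only on $c_{\mathbb A}$ and $n$. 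Note that $\mathbb A^*$ satisfies the same bounds with the same constant (see \eqref{Stokes-new-adjoint}), so the adjoint problem is well-posed too, which will be needed to identify the solution operator as an isomorphism both ways.

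\smallskip

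The main obstacle, and the only genuinely nontrivial step, is passing from $p=2$ to an open interval of exponents. The idea is to encode \eqref{transmission-S-variational-dl-3-equiv-0} as a single bounded linear operator
\[
{\mathcal T}_p:\ {\mathcal H}^1_p({\mathbb R}^n)^n\times L_p({\mathbb R}^n)\ \longrightarrow\ {\mathcal H}^{-1}_p({\mathbb R}^n)^n\times L_p({\mathbb R}^n),\qquad
{\mathcal T}_p({\bf u},\pi)=(\boldsymbol\xi,\zeta),
\]
which is consistent in $p$ (it acts by the same differential/integral expression) and, by the $p=2$ step, an isomorphism for $p=2$. The domain and target are complex interpolation scales: $\{{\mathcal H}^1_p({\mathbb R}^n)\}_{1<p<n}$ by \eqref{int-weight}, $\{L_p({\mathbb R}^n)\}$ classically, and $\{{\mathcal H}^{-1}_p({\mathbb R}^n)\}$ by duality from \eqref{int-weight}. Šneiberg's stability theorem for invertible operators on complex interpolation scales then guarantees that the set of $p$ for which ${\mathcal T}_p$ is invertible is open; since it contains $2$, it contains $(p_*/(p_*-1),p_*)$ for some $p_*\in(2,\infty)$. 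Intersecting with the structural constraint $(n/(n-1),n)$ — needed so that the semi-norm-equivalence used for coercivity and the Kozono–Sohr surjectivity remain valid — gives the admissible range $\mathcal R(p_*,n)$ in \eqref{p*}, and on that range invertibility of ${\mathcal T}_p$ is precisely the asserted well-posedness with \eqref{estimate-1-wp-S}; the quantitative bound on $\|{\mathcal T}_p^{-1}\|$ is the Šneiberg estimate, which depends only on $c_{\mathbb A}$, $n$, and $p$. The delicate points to check in writing this out are: (a) that the bilinear forms extend boundedly to the asymmetric pairing ${\mathcal H}^1_p\times{\mathcal H}^1_{p'}$ and ${\mathcal H}^1_p\times L_{p'}$ for $p$ near $2$ (clear, again from $\mathbb A\in L_\infty$ and the semi-norm bounds, using $1<p,p'<n$); and (b) that the identification of ${\mathcal T}_p$ with the interpolation-consistent family really sees the {\em same} operator at the interpolation endpoints, which follows because ${\mathcal D}({\mathbb R}^n)$ is dense in every ${\mathcal H}^1_p({\mathbb R}^n)$ (stated after \eqref{Kozono}) and the defining identities \eqref{transmission-S-variational-dl-3-equiv-0} are tested against such dense classes.
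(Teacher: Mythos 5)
Your proposal is correct and follows essentially the same route as the paper: Babu\v{s}ka--Brezzi (Theorem \ref{B-B}) at $p=2$, with coercivity of $a_{\mathbb R^n}$ on all of ${\mathcal H}^1({\mathbb R}^n)^n$ from \eqref{mu} and the semi-norm equivalence, the inf-sup condition from the Kozono--Sohr surjectivity of the divergence via Lemma \ref{surj-inj-inf-sup}, followed by recasting \eqref{transmission-S-variational-dl-3-equiv-0} as an operator on the interpolation scales $\{{\mathcal X}_p\}$, $\{{\mathcal X}'_{p'}\}$ for $p\in(\tfrac{n}{n-1},n)$ and invoking stability of invertibility under complex interpolation to obtain the open interval around $2$. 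The only cosmetic difference is your aside about the adjoint problem, which is not needed (and not used in the paper) since Theorem \ref{B-B} already yields the two-sided isomorphism at $p=2$.
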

\begin{proof}
Inequalities \eqref{mu} combined with the H\"{o}lder inequality imply that there exists a constant $C=C(p, n,c_{\mathbb A})>0$ such that
\begin{align}
\label{a-1-v}
|a_{\mathbb R^n}({\bf u},{\bf v})|
\leq C\|{\bf u}\|_{{\mathcal H}^1_{p}({\mathbb R}^n)^n}\|{\bf v}\|_{{\mathcal H}^1_{p'}({\mathbb R}^n)^n},\ \forall\, {\bf u}\in {\mathcal H}^1_{p}({\mathbb R}^n)^n,\, {\bf v}\in \mathcal H^1_{p'}({\mathbb R}^n)^n.
\end{align}
Thus, the bilinear form $a_{{\mathbb R}^n}:{\mathcal H}^1_{p}({\mathbb R}^n)^n\times {\mathcal H}^1_{p'}({\mathbb R}^n)^n\to {\mathbb R}$ is bounded for any $p\in (1,\infty )$. 
The bilinear form $b_{\mathbb R^n}:{\mathcal H}^1_{p}({\mathbb R}^n)^n\!\times \!L_{p'}({\mathbb R}^n)\!\to \!{\mathbb R}$ is also bounded for any $p\!\in \!(1,\infty )$.

Let us first prove the lemma for $p=2$.
To do so, we intend to use Theorem \ref{B-B}, which requires the coercivity of the bilinear form $a_{{\mathbb R}^n}(\cdot ,\cdot )$ from ${\mathcal H}^{1}({\mathbb R}^n)^n\times {\mathcal H}^{1}({\mathbb R}^n)^n$ to ${\mathbb R}$.
Indeed, the strong ellipticity condition \eqref{mu} and the property that the semi-norm
is a norm on $\mathcal H^1({\mathbb R}^n)^n$ equivalent to the norm $\|\cdot \|_{\mathcal H^1({\mathbb R}^n)^n}$ (see \eqref{weight-2p} and \eqref{seminorm-R3} with $p=2$),
imply that there exists a constant $c_1=c_1(n)>0$ such that
\begin{align}
\label{a-1-v2-S}
a_{{\mathbb R}^n}({\bf v},{\bf v})\geq 
c_{\mathbb A}^{-1}\|\nabla {\bf v}\|_{L_2({\mathbb R}^n)^{n\times n}}^2
\geq 
c_{\mathbb A}^{-1}c_1\|{\bf v}\|_{{\mathcal H}^1({\mathbb R}^n)^n}^2,\, \forall \, {\bf v}\in {\mathcal H}^1({\mathbb R}^n)^n.
\end{align}
{Inequalities} \eqref{a-1-v} and \eqref{a-1-v2-S} show that the bilinear form $a_{{\mathbb R}^n}:{\mathcal H}^1({\mathbb R}^n)^n\times {\mathcal H}^1({\mathbb R}^n)^n\to {\mathbb R}$ is bounded and coercive.

Moreover, the boundedness of the operator
${\rm{div}}:{\mathcal H}^1({\mathbb R}^n)^n\to L_2({\mathbb R}^n)$
implies that the bilinear form 
$b_{\mathbb R^n}:{\mathcal H}^1({\mathbb R}^n)^n\times L_2({\mathbb R}^n)\to {\mathbb R}$ is bounded as well.
In addition, the subspace ${\mathcal H}^1_{\rm div}(\mathbb R^n)^n$ of ${\mathcal H}^1(\mathbb R^n)^n$-divergence free vector fields has the following characterization 
\begin{align}
{\mathcal H}^1_{\rm div}(\mathbb R^n)^n
=&\left\{{\bf w}\in {\mathcal H}^1({\mathbb R}^n)^n: b_{\mathbb R^n}({\bf w},q)\!=\!0,\ \forall \, q\in L_2({\mathbb R}^n) \right\}.\nonumber
\end{align}
In view of the isomorphism property of the operator
\begin{align}
\label{div-S}
-{\rm{div}}:{\mathcal H}^1({\mathbb R}^n)^n/{\mathcal H}^1_{\rm div}(\mathbb R^n)^n\to L_2(\mathbb R^n)
\end{align}
(cf. \cite[Proposition 2.1]{Al-Am-1}, \cite[Lemma 2.5]{Kozono-Shor}),
there exists a constant $c_2>0$ such that for any $q\in L_2({\mathbb R}^n)$ there exists ${\bf v}\in {\mathcal H}^1({\mathbb R}^n)^n$ satisfying the equation $-{\rm{div}}\, {\bf v}=q$ and the inequality
$\|\mathbf v\|_{{\mathcal H}^1({\mathbb R}^n)^n}\leq c_2\|q\|_{L_2({{\mathbb R}^n})}$, and hence
\begin{align*}
b_{\mathbb R^n}({\bf v},q)=-\left\langle {\rm{div}}\, {\bf v},q\right\rangle _{{\mathbb R}^n}
=\langle q,q\rangle _{{\mathbb R}^n}=\|q\|_{L_2({\mathbb R}^n)}^2
\geq c_2^{-1}\|{\bf v}\|_{{\mathcal H}^1({\mathbb R}^n)^n}\|q\|_{L_2({\mathbb R}^n)}.
\end{align*}
Consequently, the bilinear form $b_{\mathbb R^n}(\cdot ,\cdot):{\mathcal H}^1({\mathbb R}^n)^n\times L_2({\mathbb R}^n)\to {\mathbb R}$ satisfies the inf-sup condition
\begin{align*}
\inf _{q\in L_2({\mathbb R}^n)\setminus \{0\}}\sup _{{\bf w}\in {\mathcal H}^1({\mathbb R}^n)^n\setminus \{\bf 0\}}
\frac{b_{\mathbb R^n}({\bf w},q)}{\|{\bf w}\|_{{\mathcal H}^1({\mathbb R}^n)^n}\|q\|_{L_2({\mathbb R}^n)}}
\geq \inf _{q\in L_2({\mathbb R}^n)\setminus \{0\}}
\frac{c_2^{-1}\|{\bf v}\|_{{\mathcal H}^1({\mathbb R}^n)^n}\|q\|_{L_2({\mathbb R}^n)}}
{\|{\bf v}\|_{{\mathcal H}^1({\mathbb R}^n)^n}\|q\|_{L_2({\mathbb R}^n)}}
=c_2^{-1}
\end{align*}
(see also Lemma \ref{surj-inj-inf-sup}(ii), and \cite[Proposition 2.4]{Sa-Se} for $n=2,3$).
Then Theorem \ref{B-B}, with $X\!=\!{\mathcal H}^1({\mathbb R}^n)^n$, $M\!=\!L_2({\mathbb R}^n)$, $V\!\!={\mathcal H}^1_{\rm{div}}(\mathbb R^n)^n$, implies
that problem \eqref{transmission-S-variational-dl-3-equiv-0} is well posed for $p=2$.

Let
\begin{align}
\label{X-p-M}
{\mathcal X}_p({\mathbb R}^n):={\mathcal H}^1_{p}({\mathbb R}^n)^n\times L_{p}({\mathbb R}^n),\
{\mathcal X}'_{p'}({\mathbb R}^n):={\mathcal H}^{-1}_{p'}({\mathbb R}^n)^n\times L_{p'}({\mathbb R}^n).
\end{align}
and note that ${\mathcal X}'_{p'}({\mathbb R}^n)$ is the dual of the space ${\mathcal X}_{p}({\mathbb R}^n)$.
Let
$
\mathfrak{T}_{{\mathbb R}^n}=(\mathfrak{T}_{1;{\mathbb R}^n},\mathfrak{T}_{2;{\mathbb R}^n})
:{\mathcal X}_{p}({\mathbb R}^n)\to {\mathcal X}'_{p'}({\mathbb R}^n)
$
be the operator defined  on any  $({\bf u},\pi )\in {\mathcal X}_{p}({\mathbb R}^n)$ in the weak form by
\begin{align*}
\langle \mathfrak{T}_{1;{\mathbb R}^n}({\bf u},\pi ),\mathbf v\rangle _{{\mathbb R}^n}
=a_{{\mathbb R}^n}({\bf u},\mathbf v)+b_{\mathbb R^n}(\mathbf v,\pi ),\,
\langle \mathfrak{T}_{2;{\mathbb R}^n}({\bf u},\pi ),q\rangle _{{\mathbb R}^n}=b_{\mathbb R^n}({\bf u},q),
\ \forall \ (\mathbf v,q)\in {\mathcal X}_{p'}({\mathbb R}^n).
\end{align*}
Hence, establishing the existence of a solution to the variational problem \eqref{transmission-S-variational-dl-3-equiv-0} is equivalent to showing that the operator $\mathfrak{T}_{{\mathbb R}^n}:{\mathcal X}_{p}({\mathbb R}^n)\to {\mathcal X}'_{p'}({\mathbb R}^n)$ is an isomorphism
(see also \cite[Proposition 7.2]{B-M-M-M}, \cite[Theorem 5.6]{H-J-K-R}, and \cite[Theorem 3.1]{Ott-Kim-Brown} for the standard Stokes system).

The linear operator $\mathfrak{T}_{{\mathbb R}^n}:{\mathcal X}_{p}({\mathbb R}^n)\to {\mathcal X}'_{p'}({\mathbb R}^n)$ is continuous for any $p\in (1,\infty )$ due to \eqref{a-1-v}.
We already shown the operator $\mathfrak{T}_{{\mathbb R}^n}:{\mathcal X}_{p}({\mathbb R}^n)\to {\mathcal X}'_{p'}({\mathbb R}^n)$ is an isomorphism for $p=2$.
To show that it is also a isomorphism for $p$ in an open interval containing $2$, we proceed as follows.

Let us note that the sets $\{{\mathcal X}_{p}({\mathbb R}^n)\}_{p\in {\mathcal I}}$ and $\{{\mathcal X}_{p'}'({\mathbb R}^n)\}_{p\in {\mathcal I}}$ are both complex interpolation scales whenever {${\mathcal I}=(\frac{n}{n-1},n)$}.
To show this, we note that the sets $\{{\mathcal H}^1_{p}({\mathbb R}^n)\}_{1<p<n}$ and $\{L_p({\mathbb R}^n)\}_{p\in (1,\infty )}$ are complex interpolation scales (see \eqref{int-weight}, \cite[Theorem 3]{Triebel-2}, \cite[Theorem 2.4.2]{M-W}). Moreover, duality theorems for the complex method of interpolation imply that the dual of an interpolation scale is an interpolation scale itself (cf., e.g., \cite[Theorem 3.7.1, Corollary 4.5.2]{B-L}, \cite[p. 4391]{B-M-M-M}). Thus, starting with the complex interpolation scale {$\{{\mathcal H}^1_{p'}({\mathbb R}^n)\}_{1<p'<n}$}, we deduce by duality that the set {$\{{\mathcal H}^{-1}_{p}({\mathbb R}^n)\}_{p\in (\frac{n}{n-1},\infty )}$} is a complex interpolation scale as well.
Therefore, the range ${\mathcal I}$ of $p$ for which both sets {$\{{\mathcal H}^1_{p}({\mathbb R}^n)\}_{p\in {\mathcal I}}$} and {$\{{\mathcal H}^{-1}_{p}({\mathbb R}^n)\}_{p\in {\mathcal I}}$} are complex interpolation scales is the interval $(\frac{n}{n-1},n)$.
Consequently, the sets {$\{{\mathcal X}_{p}({\mathbb R}^n)\}_{\frac{n}{n-1}<p<n}$ and $\{{\mathcal X}_{p'}'({\mathbb R}^n)\}_{\frac{n}{n-1}<p<n}$} are complex interpolation scales.

Then the continuity of the operators $\mathfrak{T}_{{\mathbb R}^n}:{\mathcal X}_{p}({\mathbb R}^n)\to {\mathcal X}'_{p'}({\mathbb R}^n)$ for all $p\in (1,\infty )$, the isomorphism property of the operator $\mathfrak{T}_{{\mathbb R}^n}:{\mathcal X}_{2}({\mathbb R}^n)\to {\mathcal X}'_{2}({\mathbb R}^n)$, and 
the stability of the isomorphism property {on complex interpolation scales} (cf., e.g., \cite[Proposition 4.1]{Vignati}, \cite[Theorem 11.9.24]{M-W},
imply that there exists $p_*\in (2,\infty )$ such that for any $p\in \left(\frac{p_*}{p_*-1},p_*\right)\cap \left(\frac{n}{n-1},n\right)$ the operator $\mathfrak{T}_{{\mathbb R}^n}:{\mathcal X}_{p}({\mathbb R}^n)\!\to \!{\mathcal X}'_{p'}({\mathbb R}^n)$ 
is an isomorphism (see also \cite[Theorem 7.3]{B-M-M-M}, \cite[Theorem 5.6]{H-J-K-R}, \cite[Theorem 3.1]{Ott-Kim-Brown}).

Consequently, whenever condition \eqref{p*} holds 
and for all given data $(\boldsymbol\xi ,\zeta )\in {\mathcal H}^{-1}_{p}({\mathbb R}^n)^n\times L_p({\mathbb R}^n)$, there exists a unique solution $({\bf u},\pi )\!\in \!{{\mathcal H}^1_{p}({\mathbb R}^n)^n}\times L_p({\mathbb R}^n)$ of the equation $\mathfrak{T}_{{\mathbb R}^n}({\bf u},\pi )\!=\!(\boldsymbol\xi ,\zeta )$ or, equivalently, of the variational problem \eqref{transmission-S-variational-dl-3-equiv-0}, satisfying inequality \eqref{estimate-1-wp-S}.
\end{proof}

Next we use Lemma \ref{lemma-a47-1-Stokes} and show the well-posedness of the $L_{\infty}$-coefficient Stokes system in the space ${\mathcal H}^1_{p}({\mathbb R}^n)^n\!\times \!L_p({\mathbb R}^n)$ for any $p\in\mathcal R(p_*,n)$
(cf. \cite[Theorem 4.2]{K-M-W-1} for $p=2$ with $\mathbb A(x)=\mu(x)\mathbb I$, \cite[Proposition 2.9]{Kozono-Shor} and \cite[Theorem 3]{Al-Am-1} for $p\in (1,n)$ in the constant-coefficient case).
\begin{theorem}
\label{Brinkman-problem-p}
Let $\mathbb A$ satisfy conditions \eqref{Stokes-1} and \eqref{mu}.
Then there exists $p_*\in (2,\infty )$, 
such that for any $p\in{\mathcal R}(p_*,n)$, cf. \eqref{p*},
and for each $\mathbf f \in {\mathcal H}^{-1}_{p}({\mathbb R}^n)^n$,
the $L_{\infty}$-coefficient Stokes system
\begin{eqnarray}
\label{Newtonian-S-p}
\left\{
\begin{array}{ll}
\partial _\alpha\left(A^{\alpha \beta }\partial _\beta {\bf u}\right)-\nabla \pi=\mathbf f  & \mbox{ in } {\mathbb R}^n\,, 
\\
{\rm{div}}\, {\bf u}=0 & \mbox{ in } {\mathbb R}^n\,,
\end{array}\right.
\end{eqnarray}
has a unique solution $({\bf u}_{\mathbf f },\pi_{\mathbf f })\!\in \!{\mathcal H}^1_{p}({\mathbb R}^n)^n\!\times \!L_p({\mathbb R}^n)$ and there is a constant $C\!=\!C(c_{\mathbb A},p,n)\!>\!0$ such that
$\|{\bf u}_{\mathbf f }\|_{{\mathcal H}^1_{p}({\mathbb R}^n)^n}+ \|\pi_{\mathbf f }\|_{L_p({\mathbb R}^n)}\leq C\|\mathbf f \|_{{\mathcal H}^{-1}_{p}({\mathbb R}^n)^n}\,.$
\end{theorem}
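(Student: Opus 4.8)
The plan is to reduce the Stokes system \eqref{Newtonian-S-p} to the mixed variational problem \eqref{transmission-S-variational-dl-3-equiv-0} of Lemma \ref{lemma-a47-1-Stokes}, taken with data $\boldsymbol\xi=-\mathbf f$ and $\zeta=0$, and then to read the conclusion off that lemma. Before doing so I would record two preliminary facts, valid for $p\in{\mathcal R}(p_*,n)$, cf. \eqref{p*}: first, both $p$ and $p'$ lie in $(\frac{n}{n-1},n)\subset(1,n)$, so that by \eqref{Kozono} and the density statement following it the space ${\mathcal D}({\mathbb R}^n)^n$ is dense in ${\mathcal H}^1_{p'}({\mathbb R}^n)^n$; second, the operators $\partial_\alpha:L_p({\mathbb R}^n)\to{\mathcal H}^{-1}_p({\mathbb R}^n)$ are bounded, since for $g\in L_p({\mathbb R}^n)$ and $v\in{\mathcal H}^1_{p'}({\mathbb R}^n)$ one has $|\langle\partial_\alpha g,v\rangle_{{\mathbb R}^n}|=|\langle g,\partial_\alpha v\rangle_{{\mathbb R}^n}|\le\|g\|_{L_p({\mathbb R}^n)}\,\|v\|_{{\mathcal H}^1_{p'}({\mathbb R}^n)}$. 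Consequently, for $({\bf u},\pi)\in{\mathcal H}^1_{p}({\mathbb R}^n)^n\times L_p({\mathbb R}^n)$ we have $A^{\alpha\beta}\partial_\beta{\bf u}\in L_p({\mathbb R}^n)^n$ (using $\mathbb A\in L_\infty({\mathbb R}^n)^{n^4}$), hence $\partial_\alpha(A^{\alpha\beta}\partial_\beta{\bf u})\in{\mathcal H}^{-1}_p({\mathbb R}^n)^n$ and $\nabla\pi\in{\mathcal H}^{-1}_p({\mathbb R}^n)^n$, so that the first equation in \eqref{Newtonian-S-p} is meaningful in ${\mathcal H}^{-1}_p({\mathbb R}^n)^n$, while ${\rm div}\,{\bf u}\in L_p({\mathbb R}^n)$, so that the second is meaningful in $L_p({\mathbb R}^n)$.

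Next I would establish the equivalence of the two formulations. If $({\bf u},\pi)\in{\mathcal H}^1_{p}({\mathbb R}^n)^n\times L_p({\mathbb R}^n)$ solves \eqref{Newtonian-S-p}, then testing the first equation against $\boldsymbol\varphi\in{\mathcal D}({\mathbb R}^n)^n$ and integrating by parts gives $a_{{\mathbb R}^n}({\bf u},\boldsymbol\varphi)+b_{{\mathbb R}^n}(\boldsymbol\varphi,\pi)=-\langle\mathbf f,\boldsymbol\varphi\rangle_{{\mathbb R}^n}$, with $a_{{\mathbb R}^n},b_{{\mathbb R}^n}$ the bilinear forms \eqref{a-v}--\eqref{b-v}; by the boundedness of $a_{{\mathbb R}^n}$ (cf. \eqref{a-1-v}) and of $b_{{\mathbb R}^n}$, together with the density noted above, this identity extends to all ${\bf v}\in{\mathcal H}^1_{p'}({\mathbb R}^n)^n$, while ${\rm div}\,{\bf u}=0$ reads $b_{{\mathbb R}^n}({\bf u},q)=0$ for every $q\in L_{p'}({\mathbb R}^n)$; thus $({\bf u},\pi)$ solves \eqref{transmission-S-variational-dl-3-equiv-0} with $\boldsymbol\xi=-\mathbf f$, $\zeta=0$. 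Conversely, taking ${\bf v}=\boldsymbol\varphi\in{\mathcal D}({\mathbb R}^n)^n$ in \eqref{transmission-S-variational-dl-3-equiv-0} and undoing the integration by parts yields $\partial_\alpha(A^{\alpha\beta}\partial_\beta{\bf u})-\nabla\pi=\mathbf f$ in ${\mathcal D}'({\mathbb R}^n)^n$, hence in ${\mathcal H}^{-1}_p({\mathbb R}^n)^n$ by the mapping properties recorded above, while $\zeta=0$ forces ${\rm div}\,{\bf u}=0$. Equivalently, \eqref{Newtonian-S-p} is precisely the equation $\mathfrak T_{{\mathbb R}^n}({\bf u},\pi)=(-\mathbf f,0)$ for the isomorphism $\mathfrak T_{{\mathbb R}^n}$ constructed in the proof of Lemma \ref{lemma-a47-1-Stokes}.

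It then remains to invoke Lemma \ref{lemma-a47-1-Stokes} with $\boldsymbol\xi=-\mathbf f\in{\mathcal H}^{-1}_p({\mathbb R}^n)^n$ and $\zeta=0\in L_p({\mathbb R}^n)$: for every $p\in{\mathcal R}(p_*,n)$ problem \eqref{transmission-S-variational-dl-3-equiv-0} has a unique solution $({\bf u}_{\mathbf f},\pi_{\mathbf f})\in{\mathcal H}^1_{p}({\mathbb R}^n)^n\times L_p({\mathbb R}^n)$, and estimate \eqref{estimate-1-wp-S}, with $\|\zeta\|_{L_p({\mathbb R}^n)}=0$, becomes $\|{\bf u}_{\mathbf f}\|_{{\mathcal H}^1_{p}({\mathbb R}^n)^n}+\|\pi_{\mathbf f}\|_{L_p({\mathbb R}^n)}\le C\|\mathbf f\|_{{\mathcal H}^{-1}_p({\mathbb R}^n)^n}$ with $C=C(c_{\mathbb A},p,n)>0$. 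By the equivalence just proved, $({\bf u}_{\mathbf f},\pi_{\mathbf f})$ is the unique solution of \eqref{Newtonian-S-p} in ${\mathcal H}^1_{p}({\mathbb R}^n)^n\times L_p({\mathbb R}^n)$, and it satisfies the required bound, which is the assertion.

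I do not expect a genuine obstacle here: the hard analytic work — the $p=2$ well-posedness via Theorem \ref{B-B} and its propagation to an open interval of exponents by complex interpolation and stability of isomorphisms — has already been carried out in Lemma \ref{lemma-a47-1-Stokes}. The one point that must be handled with care is the passage, in the variational identity, from test functions in ${\mathcal D}({\mathbb R}^n)^n$ to all of ${\mathcal H}^1_{p'}({\mathbb R}^n)^n$; this is exactly what requires $p'\in(1,n)$, equivalently the upper endpoint $n$ in the definition \eqref{p*} of ${\mathcal R}(p_*,n)$, since the density of ${\mathcal D}({\mathbb R}^n)^n$ in ${\mathcal H}^1_{p'}({\mathbb R}^n)^n$ is ensured, via \eqref{Kozono}, only in that range. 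The remaining verifications are routine bookkeeping with the mapping properties of $\partial_\alpha$ and of multiplication by the $L_\infty$ coefficients $\mathbb A$.
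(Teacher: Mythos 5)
Your proposal is correct and follows essentially the same route as the paper: reduce \eqref{Newtonian-S-p} to the mixed variational problem \eqref{transmission-S-variational-dl-3-equiv-0} with $\boldsymbol\xi=-\mathbf f$, $\zeta=0$, using the density of ${\mathcal D}({\mathbb R}^n)^n$ in ${\mathcal H}^1_{p'}({\mathbb R}^n)^n$, and then invoke Lemma \ref{lemma-a47-1-Stokes}. The paper states this in one sentence; you have merely filled in the routine verifications (boundedness of $\partial_\alpha:L_p\to{\mathcal H}^{-1}_p$ and the two directions of the equivalence), all of which are accurate.
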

\begin{proof}
Let $p_*\in (2,\infty )$ be as in Lemma \ref{lemma-a47-1-Stokes} and $p\in {\mathcal R}(p_*,n)$.
Then the dense embedding of the space ${\mathcal D}({\mathbb R}^n)^n$ in ${\mathcal H}^1_{p'}({\mathbb R}^n)^n$ 
shows that system \eqref{Newtonian-S-p} has the equivalent variational form \eqref{transmission-S-variational-dl-3-equiv-0} (with $\zeta =0$, $\boldsymbol\xi =-\mathbf f $), and the well-posedness of system \eqref{Newtonian-S-p} follows from Lemma \ref{lemma-a47-1-Stokes}.
\end{proof}
Theorem \ref{Brinkman-problem-p} allows us to define the Newtonian potential operators and show their continuity.
\begin{definition}
\label{Newtonian-B-var-variable-1}
Let $\mathbb A$ satisfy conditions \eqref{Stokes-1} and \eqref{mu}.
Let  $p_*\in (2,\infty )$ be as in Lemma \ref{lemma-a47-1-Stokes} and
$p\in {\mathcal R}(p_*,n)$, cf. \eqref{p*}.
For $\mathbf f \in {\mathcal H}^{-1}_{p}({\mathbb R}^n)^n$, we define the {\it Newtonian velocity and pressure potentials} for the $L_{\infty}$-coefficient Stokes system, by setting
\begin{align*}
\boldsymbol{\mathcal N}_{\mathbb R^n}{\mathbf f }:={\bf u}_{\mathbf f },\
{\mathcal Q}_{\mathbb R^n}{\mathbf f }:=\pi _{\mathbf f },
\end{align*}
where $({\bf u}_{\mathbf f },\pi _{\mathbf f })\in {\mathcal H}^1_{p}({\mathbb R}^n)^n\times L_p({\mathbb R}^n)$ is the unique solution of problem \eqref{Newtonian-S-p} with the given datum $\mathbf f $.
\end{definition}
\begin{lemma}
\label{Newtonian-B-var-1}
Let $\mathbb A$ satisfy conditions \eqref{Stokes-1} and \eqref{mu}. 
Let  $p_*\in (2,\infty )$ be as in Lemma \ref{lemma-a47-1-Stokes} and $p\in {\mathcal R}(p_*,n)$, cf. \eqref{p*}.
Then the following operators are linear and continuous
\begin{align}
\label{Newtonian-S-var-2}
\boldsymbol{\mathcal N}_{\mathbb R^n}:{\mathcal H}^{-1}_{p}({\mathbb R}^n)^n\to {\mathcal H}^1_{p}({\mathbb R}^n)^n,\
{\mathcal Q}_{\mathbb R^n}:{\mathcal H}^{-1}_{p}({\mathbb R}^n)^n\to L_p({\mathbb R}^n).
\end{align}
\end{lemma}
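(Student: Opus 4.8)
The plan is to read off linearity and continuity directly from the well-posedness result already established in Theorem~\ref{Brinkman-problem-p} and the defining relations in Definition~\ref{Newtonian-B-var-variable-1}. First I would fix $p_*\in(2,\infty)$ as in Lemma~\ref{lemma-a47-1-Stokes} and take $p\in{\mathcal R}(p_*,n)$, so that for every $\mathbf f\in{\mathcal H}^{-1}_{p}({\mathbb R}^n)^n$ the pair $({\bf u}_{\mathbf f},\pi_{\mathbf f})=(\boldsymbol{\mathcal N}_{\mathbb R^n}{\mathbf f},{\mathcal Q}_{\mathbb R^n}{\mathbf f})\in{\mathcal H}^1_{p}({\mathbb R}^n)^n\times L_p({\mathbb R}^n)$ is the \emph{unique} solution of \eqref{Newtonian-S-p}. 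Uniqueness is what makes the assignment $\mathbf f\mapsto({\bf u}_{\mathbf f},\pi_{\mathbf f})$ single-valued, hence a genuine map; I would note this explicitly.

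Next I would establish linearity. Given $\mathbf f_1,\mathbf f_2\in{\mathcal H}^{-1}_{p}({\mathbb R}^n)^n$ and scalars $c_1,c_2$, the pair $(c_1{\bf u}_{\mathbf f_1}+c_2{\bf u}_{\mathbf f_2},\,c_1\pi_{\mathbf f_1}+c_2\pi_{\mathbf f_2})$ lies in ${\mathcal H}^1_{p}({\mathbb R}^n)^n\times L_p({\mathbb R}^n)$ and, because the Stokes operator $\boldsymbol{\mathcal L}$ in \eqref{Stokes} and the divergence are linear, it solves \eqref{Newtonian-S-p} with datum $c_1\mathbf f_1+c_2\mathbf f_2$. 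By the uniqueness part of Theorem~\ref{Brinkman-problem-p} this pair must coincide with $({\bf u}_{c_1\mathbf f_1+c_2\mathbf f_2},\pi_{c_1\mathbf f_1+c_2\mathbf f_2})$, which gives $\boldsymbol{\mathcal N}_{\mathbb R^n}(c_1\mathbf f_1+c_2\mathbf f_2)=c_1\boldsymbol{\mathcal N}_{\mathbb R^n}\mathbf f_1+c_2\boldsymbol{\mathcal N}_{\mathbb R^n}\mathbf f_2$ and likewise for ${\mathcal Q}_{\mathbb R^n}$.

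For continuity I would simply invoke the a~priori estimate in Theorem~\ref{Brinkman-problem-p}: there is $C=C(c_{\mathbb A},p,n)>0$ with
\begin{align*}
\|\boldsymbol{\mathcal N}_{\mathbb R^n}\mathbf f\|_{{\mathcal H}^1_{p}({\mathbb R}^n)^n}+\|{\mathcal Q}_{\mathbb R^n}\mathbf f\|_{L_p({\mathbb R}^n)}
=\|{\bf u}_{\mathbf f}\|_{{\mathcal H}^1_{p}({\mathbb R}^n)^n}+\|\pi_{\mathbf f}\|_{L_p({\mathbb R}^n)}
\leq C\|\mathbf f\|_{{\mathcal H}^{-1}_{p}({\mathbb R}^n)^n},
\end{align*}
and in particular each of $\|\boldsymbol{\mathcal N}_{\mathbb R^n}\mathbf f\|_{{\mathcal H}^1_{p}({\mathbb R}^n)^n}$ and $\|{\mathcal Q}_{\mathbb R^n}\mathbf f\|_{L_p({\mathbb R}^n)}$ is bounded by $C\|\mathbf f\|_{{\mathcal H}^{-1}_{p}({\mathbb R}^n)^n}$. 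This is exactly the boundedness of the two operators in \eqref{Newtonian-S-var-2}. Honestly, there is no real obstacle here: the lemma is a direct corollary of Theorem~\ref{Brinkman-problem-p}, and the only thing requiring a word of care is pointing out that uniqueness is used twice—once to make the potentials well defined and once to transfer linearity of the PDE to linearity of the solution operators. I would therefore keep the proof to a couple of sentences referencing Theorem~\ref{Brinkman-problem-p} and Definition~\ref{Newtonian-B-var-variable-1}.
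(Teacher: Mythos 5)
Your proof is correct and matches the paper's intent exactly: the paper gives no separate argument for this lemma, treating it as an immediate consequence of the well-posedness and a priori estimate in Theorem~\ref{Brinkman-problem-p} together with Definition~\ref{Newtonian-B-var-variable-1}, which is precisely your route (uniqueness for well-definedness and linearity, the estimate for continuity). Nothing is missing.
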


\subsection{The single layer potential operator for the Stokes system with $L_{\infty }$ coefficients}
Next we show a well-posedness result for a transmission problem and use it to define $L_{\infty}$-coefficient Stokes single layer potentials in Besov spaces $B_{p,p}^{-\frac{1}{p}}(\partial \Omega )^n$ with $p$ as in Lemma \ref{lemma-a47-1-Stokes} (cf. also \cite[Propositions 5.1, 7.1]{Sa-Se}, \cite[Theorem 4.5]{K-L-W1} for $p=2$, \cite[Propositions 2.3, 2.7]{B-H} for $p=2$, for the Stokes and Brinkman systems with constant coefficients in ${\mathbb R}^n$, $n\in \{2,3\}$.)

Recall that in this paper we assume that $\Omega _{+}\subset {\mathbb R}^n$ ($n\geq 3$) is a bounded Lipschitz domain with connected boundary $\partial \Omega $, and  
$\Omega _{-}:={\mathbb R}^n\setminus \overline{\Omega}_+$.
\begin{theorem}
\label{slp-var-apr-1-p}
Let $\mathbb A$ satisfy conditions \eqref{Stokes-1} and \eqref{mu}, 
$p_*\in (2,\infty )$ be as in Lemma~\ref{lemma-a47-1-Stokes}
and $p\in {\mathcal R}(p_*,n)$, cf. \eqref{p*}.
Then for any 
${\boldsymbol\psi \in B_{p,p}^{-\frac{1}{p}}(\partial \Omega )^n}$, the transmission problem
\begin{eqnarray}
\label{var-Brinkman-transmission-sl}
\left\{
\begin{array}{ll}
\partial _\alpha\left(A^{\alpha \beta }\partial_\beta {\bf u}\right)-\nabla \pi ={\bf 0} & 
\mbox{ in } \mathbb R^n\setminus \partial\Omega ,
\\
{\rm{div}}\, {\bf u}=0 & \mbox{ in } \mathbb R^n\setminus \partial\Omega ,\
\\
\left[{\gamma }({\bf u})\right]={\bf 0},\
\left[{\bf T}({\bf u},\pi )\right]
=\boldsymbol\psi & \mbox{ on } \partial \Omega ,
\end{array}\right.
\end{eqnarray}
has a unique solution $({\bf u}_{{\boldsymbol\psi }},\pi _{\boldsymbol\psi })\in {\mathcal H}^1_{p}({\mathbb R}^n)^n\times L_{p}({\mathbb R}^n)$,
and there exists a constant $C=C(\partial \Omega ,c_{\mathbb A},p,n)>0$ such that 
\begin{align*}
%\label{estimate-4-var}
\|{\bf u}_{\boldsymbol\psi}\|_{{\mathcal H}^1_{p}({\mathbb R}^n)^n}+\|\pi _{\boldsymbol\psi }\|_{L_{p}({\mathbb R}^n)}\leq C\|\boldsymbol\psi\|_{B_{p,p}^{-\frac{1}{p}}(\partial \Omega )^n}.
\end{align*}
\end{theorem}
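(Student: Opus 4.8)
The plan is to convert the transmission problem \eqref{var-Brinkman-transmission-sl} into an equivalent mixed variational formulation over the whole space, and then invoke Lemma~\ref{lemma-a47-1-Stokes}. First I would set $X:={\mathcal H}^1_{p}({\mathbb R}^n)^n$, $\mathcal M:=L_p({\mathbb R}^n)$, and consider the pair $({\bf u},\pi)$ sought in $X\times\mathcal M$. The first two equations in \eqref{var-Brinkman-transmission-sl} say that $\boldsymbol{\mathcal L}({\bf u},\pi)$ vanishes in $\Omega_+$ and in $\Omega_-$ and that ${\bf u}$ is divergence-free in ${\mathbb R}^n\setminus\partial\Omega$; the jump condition $[\gamma({\bf u})]={\bf 0}$ together with the characterization quoted after \eqref{standard-weight-p} (namely that a function in ${\mathcal H}^1_{p}({\mathbb R}^n\setminus\partial\Omega)$ with vanishing trace jump lies in ${\mathcal H}^1_{p}({\mathbb R}^n)$) forces ${\bf u}\in{\mathcal H}^1_{p}({\mathbb R}^n)^n$, so the first geometric constraint is automatically encoded by the choice of space. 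It then remains to express $[{\bf T}({\bf u},\pi)]=\boldsymbol\psi$ variationally.

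The key step is to test against ${\bf w}\in{\mathcal H}^1_{p'}({\mathbb R}^n)^n$ and apply Lemma~\ref{lemma-add-new}, formula \eqref{jump-conormal-derivative-1f0}: since $({\bf u}_\pm,\pi_\pm,{\bf 0})\in{\boldsymbol{\mathcal H}}^1_{p}({\Omega_\pm},{\boldsymbol{\mathcal L}})$, we get
\begin{align*}
\langle\boldsymbol\psi,\gamma{\bf w}\rangle_{\partial\Omega}
=\langle[{\bf T}({\bf u},\pi)],\gamma{\bf w}\rangle_{\partial\Omega}
=\big\langle A^{\alpha\beta}\partial_\beta{\bf u},\partial_\alpha{\bf w}\big\rangle_{{\mathbb R}^n}
-\langle\pi,{\rm div}\,{\bf w}\rangle_{{\mathbb R}^n},
\end{align*}
where I use that $\Omega_+\cup\Omega_-$ has full measure in ${\mathbb R}^n$, so the sum of the two integrals over $\Omega_\pm$ equals the integral over ${\mathbb R}^n$. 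In the notation of Lemma~\ref{lemma-a47-1-Stokes} this reads $a_{{\mathbb R}^n}({\bf u},{\bf w})+b_{{\mathbb R}^n}({\bf w},\pi)=\langle\boldsymbol\psi,\gamma{\bf w}\rangle_{\partial\Omega}$ for all ${\bf w}\in{\mathcal H}^1_{p'}({\mathbb R}^n)^n$, while the divergence-free condition gives $b_{{\mathbb R}^n}({\bf u},q)=0$ for all $q\in L_{p'}({\mathbb R}^n)$. Thus \eqref{var-Brinkman-transmission-sl} is equivalent to the variational problem \eqref{transmission-S-variational-dl-3-equiv-0} with $\zeta=0$ and $\boldsymbol\xi=\gamma^*\boldsymbol\psi\in{\mathcal H}^{-1}_{p}({\mathbb R}^n)^n$, the adjoint of the bounded surjective trace operator $\gamma:{\mathcal H}^1_{p'}({\mathbb R}^n)^n\to B_{p',p'}^{1-\frac{1}{p'}}(\partial\Omega)^n=B_{p',p'}^{\frac{1}{p}}(\partial\Omega)^n$ (whose boundedness is recorded in the discussion of \eqref{ext-trace}), which maps $B_{p,p}^{-\frac1p}(\partial\Omega)^n$ boundedly into ${\mathcal H}^{-1}_{p}({\mathbb R}^n)^n$ with $\|\gamma^*\boldsymbol\psi\|_{{\mathcal H}^{-1}_{p}({\mathbb R}^n)^n}\le\|\gamma\|\,\|\boldsymbol\psi\|_{B_{p,p}^{-1/p}(\partial\Omega)^n}$. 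Applying Lemma~\ref{lemma-a47-1-Stokes} yields a unique $({\bf u}_{\boldsymbol\psi},\pi_{\boldsymbol\psi})\in{\mathcal H}^1_{p}({\mathbb R}^n)^n\times L_p({\mathbb R}^n)$ and the estimate $\|{\bf u}_{\boldsymbol\psi}\|_{{\mathcal H}^1_{p}({\mathbb R}^n)^n}+\|\pi_{\boldsymbol\psi}\|_{L_p({\mathbb R}^n)}\le C\|\gamma^*\boldsymbol\psi\|_{{\mathcal H}^{-1}_{p}({\mathbb R}^n)^n}\le C(\partial\Omega,c_{\mathbb A},p,n)\|\boldsymbol\psi\|_{B_{p,p}^{-1/p}(\partial\Omega)^n}$.

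The one point that needs care — and which I expect to be the main obstacle — is the \emph{equivalence} of the PDE-transmission formulation with the variational one, in both directions. The forward direction (a classical-type solution solves the variational problem) uses the Green identity of Lemma~\ref{lemma-add-new} exactly as above, but requires first checking that a solution of \eqref{var-Brinkman-transmission-sl} does have the regularity $({\bf u}_\pm,\pi_\pm,{\bf 0})\in{\boldsymbol{\mathcal H}}^1_{p}({\Omega_\pm},{\boldsymbol{\mathcal L}})$ so that the generalized conormal derivatives and their jump are well defined in $B_{p,p}^{-1/p}(\partial\Omega)^n$; this is where the precise function-space bookkeeping of Section~\ref{S2} is used. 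The reverse direction (a solution of \eqref{transmission-S-variational-dl-3-equiv-0} produces a solution of the transmission problem) requires: (i) testing with ${\bf w}\in{\mathcal D}(\Omega_\pm)^n$ to recover $\boldsymbol{\mathcal L}({\bf u},\pi)={\bf 0}$ in $\Omega_\pm$ in the distributional sense, hence $({\bf u}_\pm,\pi_\pm,{\bf 0})\in{\boldsymbol{\mathcal H}}^1_{p}({\Omega_\pm},{\boldsymbol{\mathcal L}})$; (ii) ${\bf u}\in{\mathcal H}^1_{p}({\mathbb R}^n)^n$ gives $[\gamma({\bf u})]={\bf 0}$; (iii) testing with general ${\bf w}\in{\mathcal H}^1_{p'}({\mathbb R}^n)^n$, subtracting the now-established identity \eqref{jump-conormal-derivative-1f0}, and using the surjectivity of $\gamma:{\mathcal H}^1_{p'}({\mathbb R}^n)^n\to B_{p',p'}^{1/p}(\partial\Omega)^n$ to conclude $\langle[{\bf T}({\bf u},\pi)]-\boldsymbol\psi,\boldsymbol\Phi\rangle_{\partial\Omega}=0$ for all $\boldsymbol\Phi$, i.e. $[{\bf T}({\bf u},\pi)]=\boldsymbol\psi$. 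Once this equivalence is in place the theorem follows immediately from Lemma~\ref{lemma-a47-1-Stokes}, and uniqueness transfers automatically since the correspondence between solutions of the two formulations is bijective.
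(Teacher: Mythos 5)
Your proposal is correct and follows essentially the same route as the paper: reduce \eqref{var-Brinkman-transmission-sl} to the mixed variational problem \eqref{transmission-S-variational-dl-3-equiv-0} with $\boldsymbol\xi=\gamma^*\boldsymbol\psi$ and $\zeta=0$ via the Green/jump identity of Lemma~\ref{lemma-add-new}, verify the equivalence in both directions (density of ${\mathcal D}$, surjectivity of ${\rm div}$ and of the trace), and conclude by Lemma~\ref{lemma-a47-1-Stokes}. The equivalence argument you flag as the delicate point is exactly the one the paper carries out, in the same order and with the same ingredients.
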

\begin{proof}
First, we note that the last condition in \eqref{var-Brinkman-transmission-sl} is understood in the sense of distributions, as in Definition \ref{conormal-derivative-var-Brinkman}.
Next, we show that the transmission problem \eqref{var-Brinkman-transmission-sl} has the following equivalent mixed variational formulation:

{\it Find $({\bf u}_{\boldsymbol\psi},\pi _{{\boldsymbol\psi }})\in {\mathcal H}^1_{p}({\mathbb R}^n)^n\times L_{p}({\mathbb R}^n)$ such that
\begin{align}
\label{single-layer-S-transmission}
\left\{\begin{array}{ll}
a_{{\mathbb R}^n}({\bf u}_{\boldsymbol\psi },{\bf v})+b_{\mathbb R^n}({\bf v},\pi _{\boldsymbol\psi })=\langle \boldsymbol\psi ,\gamma {\bf v}\rangle _{\partial \Omega }, & \forall \, {\bf v}\in {\mathcal H}^1_{p'}({\mathbb R}^n)^n,\\
b_{\mathbb R^n}({\bf u}_{\boldsymbol\psi },q)=0, & \forall \, q\in L_{p'}({\mathbb R}^n),
\end{array}
\right.
\end{align}
where $a_{{\mathbb R}^n}$ and $b_{\mathbb R^n}$ are the bilinear forms given by \eqref{a-v} and \eqref{b-v}.
}

First, assume that the pair $({\bf u}_{{\boldsymbol\psi }},\pi _{\boldsymbol\psi })\in {\mathcal H}^1_{p}({\mathbb R}^n)^n\times {L_{p}({\mathbb R}^n)}$ satisfies the transmission problem \eqref{var-Brinkman-transmission-sl}.
Then formula \eqref{jump-conormal-derivative-1f} shows that the same pair satisfies also the first equation in \eqref{single-layer-S-transmission}. The second equation of the mixed variational formulation \eqref{single-layer-S-transmission} follows from the fact that ${\bf u}_{{\boldsymbol\psi }}\in {\mathcal H}^1_{p}({\mathbb R}^n)^n$ satisfies the second equation in \eqref{var-Brinkman-transmission-sl}.
Conversely, assume that the pair $({\bf u}_{{\boldsymbol\psi }},\pi _{\boldsymbol\psi })\in {\mathcal H}^1_{p}({\mathbb R}^n)^n\times L_{p}({\mathbb R}^n)$ is a solution of the mixed variational formulation \eqref{single-layer-S-transmission}. In view of the density of the space ${\mathcal D}({\mathbb R}^n)^n$ in ${\mathcal H}^1_{p'}({\mathbb R}^n)^n$, and by choosing in the first equation of the system \eqref{single-layer-S-transmission} any ${\bf v}\in C^{\infty }({\mathbb R}^n)^n$ with compact support in $\Omega _\pm $ (and, thus, $\gamma {\bf v}={\bf 0}$), we obtain the variational equation
$\big\langle\partial _\alpha\big(A^{\alpha \beta }\partial _\beta ({\bf u}_{\boldsymbol\psi })\big)-\nabla \pi _{\boldsymbol\psi },{\bf w}\big\rangle _{\Omega _\pm}=0,\, \forall \, {\bf w}\in C_0^{\infty }(\Omega _\pm )^n\,,$
which yields the first equation in \eqref{var-Brinkman-transmission-sl}. The second equation in \eqref{var-Brinkman-transmission-sl} follows immediately from the second equation in \eqref{single-layer-S-transmission}, the property that the operator ${\rm{div}}:{\mathcal H}^1_{p}({\mathbb R}^n)^n\to L_{p}({\mathbb R}^n)$ is surjective (cf. \cite[Proposition 2.1]{Al-Am-1}, see also \cite[Proposition 2.4]{Sa-Se} for $p=2$), and the duality between the spaces $L_p({\mathbb R}^n)$ and $L_{p'}({\mathbb R}^n)$.
The assumption ${\bf u}_{\boldsymbol\psi }\in {\mathcal H}^1_{p}({\mathbb R}^n)^n$ implies the first transmission condition in \eqref{var-Brinkman-transmission-sl}. Using again formula \eqref{jump-conormal-derivative-1f}, the first equation in \eqref{single-layer-S-transmission}, and Lemma \ref{trace-operator1},  we obtain the relation
%\begin{align}
%\label{j1-0}
$\left\langle [{\bf T}({\bf u}_{\boldsymbol\psi },\pi _{\boldsymbol\psi })]-\boldsymbol \psi ,\boldsymbol\Phi\right\rangle _{\partial \Omega }=0,$ for any $\boldsymbol\Phi\in B_{p',p'}^{\frac{1}{p}}(\partial \Omega )^n$
%\end{align}
and hence the second transmission condition in \eqref{var-Brinkman-transmission-sl}.

In addition, the continuity of the trace operator $\gamma :{\mathcal H}^1_{p'}({\mathbb R}^n)^n\to B_{p',p'}^{\frac{1}{p}}(\partial \Omega )^n$ and of its adjoint $\gamma ^*:B_{p,p}^{-\frac{1}{p}}(\partial \Omega )^n\to {\mathcal H}^{-1}_{p}({\mathbb R}^n)^n$ implies the continuity of the linear form
\begin{align}
\boldsymbol\ell :{\mathcal H}^1_{p'}({\mathbb R}^n)^n\to {\mathbb R},\, \boldsymbol\ell({\bf v}):=\langle \boldsymbol\psi ,\gamma {\bf v}\rangle _{\partial \Omega }=\langle \gamma ^*\boldsymbol\psi ,{\bf v}\rangle _{{\mathbb R}^n},\ \forall \, {\bf v}\in {\mathcal H}^1_{p'}({\mathbb R}^n)^n\,.
\end{align}
According to Lemma \ref{lemma-a47-1-Stokes} there exists
$p_*\in (2,\infty )$, 
such that for any $p$ as in \eqref{p*}
and for any $\boldsymbol\psi \in B_{p,p}^{-\frac{1}{p}}({\mathbb R}^n)^n$, problem \eqref{single-layer-S-transmission} has a unique solution $({\bf u}_{{\boldsymbol\psi }},\pi _{\boldsymbol\psi })\in {\mathcal H}^1_{p}({\mathbb R}^n)^n\times L_p({\mathbb R}^n)$, which depends continuously on $\boldsymbol\psi $. Moreover, the equivalence between problems \eqref{var-Brinkman-transmission-sl} and \eqref{single-layer-S-transmission} shows that $({\bf u}_{\boldsymbol\psi },\pi _{\boldsymbol\psi })\!\in \!{\mathcal H}^1_{p}({\mathbb R}^n)^n\times L_p({\mathbb R}^n)$ is the unique solution of the transmission problem \eqref{var-Brinkman-transmission-sl}.
\end{proof}
The next result can be proved by the arguments similar to those in the proof of Theorem \ref{slp-var-apr-1-p}, mainly based on the Green formula \eqref{Green-particular-p-adj}.
\begin{theorem}
\label{slp-var-apr-1-p-adj}
Let $\mathbb A$ satisfy conditions \eqref{Stokes-1} and \eqref{mu}.
Then there exists $p_*\in (2,\infty )$, 
such that for any $p'\in{\mathcal R}(p_*,n)$, cf. \eqref{p*},
and for any 
${\boldsymbol\psi^* \in B_{p',p'}^{-\frac{1}{p'}}(\partial \Omega )^n}$, the transmission problem for the adjoint Stokes system
\begin{eqnarray}
\label{var-Brinkman-transmission-sl-adj}
\left\{
\begin{array}{ll}
\partial _\alpha\big(A^{*\alpha\beta}\partial_\beta {\bf v}\big)-\nabla q={\bf 0} & \mbox{ in } {\mathbb R}^n\setminus \partial \Omega ,
\\
{\rm{div}}\, {\bf v}=0 & \mbox{ in } {\mathbb R}^n\setminus \partial \Omega ,\
\\
\left[{\gamma }({\bf v})\right]={\bf 0},\
\left[{\bf T}^*({\bf v},q)\right]
=\boldsymbol\psi^* & \mbox{ on } \partial \Omega ,
\end{array}\right.
\end{eqnarray}
has a unique solution $({\bf v}_{{\boldsymbol\psi^*}},q_{\boldsymbol\psi^*})\in {\mathcal H}^1_{p'}({\mathbb R}^n)^n\times L_{p'}({\mathbb R}^n)$, and there exists $C_*\!=\!C_*(\partial \Omega ,c_{\mathbb A},p',n)\!>\!0$ such that
%\begin{align}
%\label{estimate-4-var}
$\|{\bf v}_{\boldsymbol\psi^*}\|_{{\mathcal H}^1_{p'}({\mathbb R}^n)^n}+\|q_{\boldsymbol\psi^* }\|_{L_{p'}({\mathbb R}^n)}\leq C_*\|\boldsymbol\psi^*\|_{B_{p',p'}^{-\frac{1}{p'}}(\partial \Omega )^n}.$
%\end{align}
\end{theorem}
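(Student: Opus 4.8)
The plan is to transcribe the proof of Theorem~\ref{slp-var-apr-1-p}, replacing the tensor $\mathbb A$ by its adjoint $\mathbb A^*$ and interchanging the roles of $p$ and $p'$. Since the coefficients of $\boldsymbol{\mathcal L}^*$ satisfy conditions \eqref{Stokes-1} and \eqref{mu} with the same constant $c_{\mathbb A}$ (see the remark following \eqref{Stokes-new-adjoint}), Lemma~\ref{lemma-a47-1-Stokes} applies to the tensor $\mathbb A^*$ and to the bilinear forms
\[
a^*_{{\mathbb R}^n}({\bf v},{\bf w}):=\big\langle A^{*\alpha\beta}\partial_\beta {\bf v},\partial_\alpha {\bf w}\big\rangle_{{\mathbb R}^n},\qquad b_{{\mathbb R}^n}({\bf v},r):=-\langle {\rm{div}}\,{\bf v},r\rangle_{{\mathbb R}^n},
\]
producing a threshold $p_*\in(2,\infty)$ (associated with $\mathbb A^*$) such that, for every $p'\in{\mathcal R}(p_*,n)$ and all $\boldsymbol\xi\in{\mathcal H}^{-1}_{p'}({\mathbb R}^n)^n$, $\zeta\in L_{p'}({\mathbb R}^n)$, the mixed variational problem
\begin{align*}
a^*_{{\mathbb R}^n}({\bf v},{\bf w})+b_{{\mathbb R}^n}({\bf w},q)&=\langle\boldsymbol\xi,{\bf w}\rangle_{{\mathbb R}^n}\quad\forall\,{\bf w}\in{\mathcal H}^1_{p}({\mathbb R}^n)^n,\\
b_{{\mathbb R}^n}({\bf v},r)&=\langle\zeta,r\rangle_{{\mathbb R}^n}\quad\forall\, r\in L_{p}({\mathbb R}^n)
\end{align*}
has a unique solution $({\bf v},q)\in{\mathcal H}^1_{p'}({\mathbb R}^n)^n\times L_{p'}({\mathbb R}^n)$ obeying the stability bound \eqref{estimate-1-wp-S} with $p$ replaced by $p'$. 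Here one uses that the interval ${\mathcal R}(p_*,n)$ defined in \eqref{p*} is invariant under the conjugation $p\mapsto p'$, so that the test spaces ${\mathcal H}^1_{p}({\mathbb R}^n)^n$ and $L_{p}({\mathbb R}^n)$ are genuinely the duals of the solution spaces.

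Next I would show that the transmission problem \eqref{var-Brinkman-transmission-sl-adj} is equivalent to the above mixed variational formulation with $\boldsymbol\xi:=\gamma^*\boldsymbol\psi^*$ and $\zeta:=0$, where $\gamma^*:B_{p',p'}^{-\frac{1}{p'}}(\partial\Omega)^n\to{\mathcal H}^{-1}_{p'}({\mathbb R}^n)^n$ is the adjoint of the bounded surjective trace operator $\gamma:{\mathcal H}^1_{p}({\mathbb R}^n)^n\to B_{p,p}^{1-\frac{1}{p}}(\partial\Omega)^n$ (recall $1-1/p=1/p'$, so $\gamma^*\boldsymbol\psi^*\in{\mathcal H}^{-1}_{p'}({\mathbb R}^n)^n$). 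If $({\bf v},q)$ solves \eqref{var-Brinkman-transmission-sl-adj}, then $({\bf v}_\pm,q_\pm,{\bf 0})\in{\boldsymbol{\mathcal H}}^1_{p'}(\Omega_\pm,\boldsymbol{\mathcal L}^*)$ and the jump Green identity \eqref{jump-conormal-derivative-1-adj} of Lemma~\ref{jump-conormal-derivative-1} (with its $p$ taken to be our $p'$) gives $\langle[{\bf T}^*({\bf v},q)],\gamma{\bf w}\rangle_{\partial\Omega}=a^*_{{\mathbb R}^n}({\bf v},{\bf w})+b_{{\mathbb R}^n}({\bf w},q)$ for all ${\bf w}\in{\mathcal H}^1_{p}({\mathbb R}^n)^n$; inserting the jump condition $[{\bf T}^*({\bf v},q)]=\boldsymbol\psi^*$ yields the first variational equation, while the second is immediate from ${\rm{div}}\,{\bf v}=0$. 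Conversely, given a variational solution: testing the first equation against ${\bf w}\in C_0^\infty(\Omega_\pm)^n$ (so that $\gamma{\bf w}={\bf 0}$) recovers $\boldsymbol{\mathcal L}^*({\bf v},q)={\bf 0}$ in $\Omega_\pm$; the second equation together with the surjectivity of ${\rm{div}}:{\mathcal H}^1_{p'}({\mathbb R}^n)^n\to L_{p'}({\mathbb R}^n)$ gives ${\rm{div}}\,{\bf v}=0$; the membership ${\bf v}\in{\mathcal H}^1_{p'}({\mathbb R}^n)^n$ forces $[\gamma({\bf v})]={\bf 0}$; and, $({\bf v}_\pm,q_\pm,{\bf 0})$ now belonging to ${\boldsymbol{\mathcal H}}^1_{p'}(\Omega_\pm,\boldsymbol{\mathcal L}^*)$, combining \eqref{jump-conormal-derivative-1-adj} with the first variational equation and the surjectivity of $\gamma$ yields $\langle[{\bf T}^*({\bf v},q)]-\boldsymbol\psi^*,\boldsymbol\Phi\rangle_{\partial\Omega}=0$ for all $\boldsymbol\Phi\in B_{p,p}^{\frac{1}{p'}}(\partial\Omega)^n$, i.e.\ the last transmission condition.

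With the equivalence in hand, Lemma~\ref{lemma-a47-1-Stokes} applied to $\mathbb A^*$ furnishes the unique solution $({\bf v}_{\boldsymbol\psi^*},q_{\boldsymbol\psi^*})\in{\mathcal H}^1_{p'}({\mathbb R}^n)^n\times L_{p'}({\mathbb R}^n)$, and the boundedness of $\gamma^*$ turns \eqref{estimate-1-wp-S} (with $p$ replaced by $p'$) into
\[
\|{\bf v}_{\boldsymbol\psi^*}\|_{{\mathcal H}^1_{p'}({\mathbb R}^n)^n}+\|q_{\boldsymbol\psi^*}\|_{L_{p'}({\mathbb R}^n)}\le C\,\|\gamma^*\boldsymbol\psi^*\|_{{\mathcal H}^{-1}_{p'}({\mathbb R}^n)^n}\le C_*\,\|\boldsymbol\psi^*\|_{B_{p',p'}^{-\frac{1}{p'}}(\partial\Omega)^n}.
\]
The one point needing real care is the bookkeeping of conjugate exponents: that the trial space is the $p'$-space while the test space is the $p$-space, and that the Besov order $-1/p'$ on $\partial\Omega$ and the order $-1$ on ${\mathbb R}^n$ are exactly those produced by duality with $\gamma$. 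The technical crux — recovering the conormal-jump condition in the converse implication — is otherwise just a transcription to $\boldsymbol{\mathcal L}^*$ of the corresponding step in the proof of Theorem~\ref{slp-var-apr-1-p}, now resting on the adjoint Green identities \eqref{Green-particular-p-adj} and \eqref{jump-conormal-derivative-1-adj}.
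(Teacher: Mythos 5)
Your proposal is correct and follows exactly the route the paper intends: the paper gives no separate proof of Theorem \ref{slp-var-apr-1-p-adj}, stating only that it follows by the arguments of Theorem \ref{slp-var-apr-1-p} based on the adjoint Green formula \eqref{Green-particular-p-adj}, which is precisely what you carry out (applying Lemma \ref{lemma-a47-1-Stokes} to $\mathbb A^*$, which satisfies \eqref{Stokes-1} and \eqref{mu} with the same constant, and swapping $p$ and $p'$ throughout). Your explicit bookkeeping of the conjugate exponents and of the duality $B_{p,p}^{1/p'}(\partial\Omega)'=B_{p',p'}^{-1/p'}(\partial\Omega)$ is accurate and fills in details the paper leaves implicit.
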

Theorem \ref{slp-var-apr-1-p} plays a key role in the following definition (cf. \cite[p. 75]{Sa-Se} and \cite[Corollary 2.5]{B-H} for the isotropic constant-coefficient case and $p=2$, and \cite[formula (4.2), Lemma 4.6]{Barton} for strongly elliptic operators).
\begin{definition}
\label{s-l-S-variational-variable}
Let $\mathbb A$ satisfy conditions \eqref{Stokes-1} and \eqref{mu}, $p_*\in (2,\infty )$ be as in Lemma~\ref{lemma-a47-1-Stokes} and  $p\in {\mathcal R}(p_*,n)$, cf. \eqref{p*}.
Then for any $\boldsymbol\psi \in B_{p,p}^{-\frac{1}{p}}(\partial \Omega )^n$ we define the {\it single layer velocity and pressure potentials} with the density $\boldsymbol\psi$
 for the Stokes operator $\boldsymbol{\mathcal L}$ with coefficients 
${\mathbb A}$, as
\begin{align}
\label{slp-S-vp-var}
{\bf V}_{\partial\Omega}{\boldsymbol\psi}:={\bf u}_{{\boldsymbol\psi}},\ {\mathcal Q}^s_{\partial\Omega}{\boldsymbol\psi}:=\pi _{{\boldsymbol\psi}}\,,
\end{align}
and the boundary operators ${\boldsymbol{\mathcal V}}_{\partial\Omega}:B_{p,p}^{-\frac{1}{p}}(\partial \Omega )^n\to B_{p,p}^{1-\frac{1}{p}}(\partial \Omega )^n$ and
${{\mathcal K}_{\partial\Omega}}:B_{p,p}^{-\frac{1}{p}}(\partial \Omega )^n\to B_{p,p}^{-\frac{1}{p}}(\partial \Omega )^n$ as
\begin{align}
\label{slp-S-oper-var}
{\boldsymbol{\mathcal V}}_{\partial\Omega}{\boldsymbol\psi}:=\gamma{\bf u}_{\boldsymbol\psi},\
{\mathcal K}_{\partial\Omega}\boldsymbol\psi:=
\frac{1}{2}\left(
{\bf T}^+({\bf u}_{\boldsymbol\psi},\pi_{\boldsymbol\psi})
+{\bf T}^-({\bf u}_{\boldsymbol\psi},\pi_{\boldsymbol\psi})\right),
\end{align}
where $({\bf u}_{{\boldsymbol\psi}},\pi _{{\boldsymbol\psi}})$ is the unique solution of the transmission problem \eqref{var-Brinkman-transmission-sl} in ${\mathcal H}^1_{p}({\mathbb R}^n)^n\times L_p({\mathbb R}^n)$.
\end{definition}
The well-posedness of the transmission problem \eqref{var-Brinkman-transmission-sl} proved in Theorem \ref{slp-var-apr-1-p}, definitions \eqref{slp-S-oper-var} and the transmission conditions in \eqref{var-Brinkman-transmission-sl} imply the following assertion (cf. \cite[Propositions 5.2 and 5.3]{Sa-Se}, \cite[Lemma A.4, (A.10), (A.12)]{K-L-M-W} and \cite[Theorem 10.5.3]{M-W} for ${\mathbb A}=\mathbb I$).
\begin{lemma}
\label{continuity-sl-S-h-var}
Let $\mathbb A$ satisfy conditions \eqref{Stokes-1} and \eqref{mu}, $p_*\in (2,\infty )$ be as in Lemma~\ref{lemma-a47-1-Stokes} and  $p\in {\mathcal R}(p_*,n)$, cf. \eqref{p*}.
Then the following operators are linear and continuous
\begin{align}
\label{s-l-S-v1-var}
&{\bf V}_{\partial\Omega}:B_{p,p}^{-\frac{1}{p}}(\partial \Omega )^n\to {\mathcal H}^1_{p}({\mathbb R}^n)^n,\
{\mathcal Q}_{\partial\Omega}^s:B_{p,p}^{-\frac{1}{p}}(\partial \Omega )^n\to L_p({\mathbb R}^n),
\\
\label{s-l-S-v2-var}
&\boldsymbol{\mathcal V}_{\partial\Omega}:B_{p,p}^{-\frac{1}{p}}(\partial \Omega )^n\to B_{p,p}^{1-\frac{1}{p}}(\partial \Omega )^n,\, 
{\mathcal K}_{\partial\Omega}:B_{p,p}^{-\frac{1}{p}}(\partial \Omega )^n\to B_{p,p}^{-\frac{1}{p}}(\partial \Omega )^n.
\end{align}
For any $\boldsymbol\psi \in B_{p,p}^{-\frac{1}{p}}(\partial \Omega )^n$, the following jump relations hold a.e. on $\partial \Omega $
\begin{align}
%\label{sl-identities-var}
&\gamma_\pm{\bf V}_{\partial\Omega}\boldsymbol\psi ={\boldsymbol{\mathcal V}}_{\partial\Omega}{\boldsymbol\psi}, 
\label{sl-identities-var-1}
\quad
{\bf T}^{\pm }\left({\bf V}_{\partial\Omega}{\boldsymbol\psi },{\mathcal Q}_{\partial\Omega}^s{\boldsymbol\psi }\right)
=\pm \frac{1}{2}\boldsymbol\psi
+{\mathcal K}_{\partial\Omega}\boldsymbol\psi .
\end{align}
\end{lemma}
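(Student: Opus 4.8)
The plan is to read off every assertion directly from the well-posedness of the transmission problem \eqref{var-Brinkman-transmission-sl} established in Theorem \ref{slp-var-apr-1-p}, together with the definitions \eqref{slp-S-vp-var}--\eqref{slp-S-oper-var}, the transmission conditions in \eqref{var-Brinkman-transmission-sl}, and the mapping properties of the trace and conormal derivative operators recorded in Lemma \ref{trace-operator1} and Lemma \ref{lem-add1}. Linearity of all four operators is immediate from the uniqueness statement in Theorem \ref{slp-var-apr-1-p}: since \eqref{var-Brinkman-transmission-sl} depends linearly on $\boldsymbol\psi$, the solution map $\boldsymbol\psi\mapsto({\bf u}_{\boldsymbol\psi},\pi_{\boldsymbol\psi})$ is linear, and hence so are ${\bf V}_{\partial\Omega}$, ${\mathcal Q}_{\partial\Omega}^s$ and, by composition with linear operators, $\boldsymbol{\mathcal V}_{\partial\Omega}$ and ${\mathcal K}_{\partial\Omega}$. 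The continuity of ${\bf V}_{\partial\Omega}$ and ${\mathcal Q}_{\partial\Omega}^s$ in \eqref{s-l-S-v1-var} is exactly the a priori estimate stated in Theorem \ref{slp-var-apr-1-p}.

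For the first jump relation in \eqref{sl-identities-var-1}, I would use the transmission condition $[\gamma({\bf u}_{\boldsymbol\psi})]={\bf 0}$, so that $\gamma_+{\bf u}_{\boldsymbol\psi}=\gamma_-{\bf u}_{\boldsymbol\psi}$; since ${\bf u}_{\boldsymbol\psi}\in{\mathcal H}^1_{p}({\mathbb R}^n)^n$, this common value equals $\gamma{\bf u}_{\boldsymbol\psi}$, which by \eqref{slp-S-oper-var} is $\boldsymbol{\mathcal V}_{\partial\Omega}\boldsymbol\psi$. Continuity of $\boldsymbol{\mathcal V}_{\partial\Omega}:B_{p,p}^{-\frac{1}{p}}(\partial\Omega)^n\to B_{p,p}^{1-\frac{1}{p}}(\partial\Omega)^n$ then follows by composing the already established continuous map $\boldsymbol\psi\mapsto{\bf u}_{\boldsymbol\psi}\in{\mathcal H}^1_{p}({\mathbb R}^n)^n$ with the bounded trace operator $\gamma:{\mathcal H}^1_{p}({\mathbb R}^n)^n\to B_{p,p}^{1-\frac{1}{p}}(\partial\Omega)^n$ of Lemma \ref{trace-operator1}.

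To handle the conormal derivatives, I would observe that the first PDE in \eqref{var-Brinkman-transmission-sl} says precisely that $({\bf u}_{\boldsymbol\psi}|_{\Omega_\pm},\pi_{\boldsymbol\psi}|_{\Omega_\pm},{\bf 0})\in{\boldsymbol{\mathcal H}}^1_{p}(\Omega_\pm,{\boldsymbol{\mathcal L}})$, and that the restriction operators ${\mathcal H}^1_{p}({\mathbb R}^n)^n\to{\mathcal H}^1_{p}(\Omega_\pm)^n$ and $L_p({\mathbb R}^n)\to L_p(\Omega_\pm)$ are bounded; combined with the a priori estimate, the map $\boldsymbol\psi\mapsto({\bf u}_{\boldsymbol\psi}|_{\Omega_\pm},\pi_{\boldsymbol\psi}|_{\Omega_\pm},{\bf 0})$ is then bounded into ${\boldsymbol{\mathcal H}}^1_{p}(\Omega_\pm,{\boldsymbol{\mathcal L}})$. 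Composing with the continuous generalized conormal derivative operator ${\bf T}^\pm:{\boldsymbol{\mathcal H}}^1_{p}(\Omega_\pm,{\boldsymbol{\mathcal L}})\to B_{p,p}^{-\frac{1}{p}}(\partial\Omega)^n$ from Lemma \ref{lem-add1}(ii) shows that $\boldsymbol\psi\mapsto{\bf T}^\pm({\bf u}_{\boldsymbol\psi},\pi_{\boldsymbol\psi})$ is bounded, hence ${\mathcal K}_{\partial\Omega}$ in \eqref{s-l-S-v2-var} is bounded.

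For the second jump relation, I would solve the elementary linear system for the pair ${\bf T}^\pm({\bf u}_{\boldsymbol\psi},\pi_{\boldsymbol\psi})$: the definition \eqref{slp-S-oper-var} gives $\frac{1}{2}\big({\bf T}^+({\bf u}_{\boldsymbol\psi},\pi_{\boldsymbol\psi})+{\bf T}^-({\bf u}_{\boldsymbol\psi},\pi_{\boldsymbol\psi})\big)={\mathcal K}_{\partial\Omega}\boldsymbol\psi$, while the transmission condition $[{\bf T}({\bf u}_{\boldsymbol\psi},\pi_{\boldsymbol\psi})]=\boldsymbol\psi$ gives ${\bf T}^+({\bf u}_{\boldsymbol\psi},\pi_{\boldsymbol\psi})-{\bf T}^-({\bf u}_{\boldsymbol\psi},\pi_{\boldsymbol\psi})=\boldsymbol\psi$; adding and subtracting these two identities yields ${\bf T}^\pm({\bf V}_{\partial\Omega}\boldsymbol\psi,{\mathcal Q}_{\partial\Omega}^s\boldsymbol\psi)=\pm\frac{1}{2}\boldsymbol\psi+{\mathcal K}_{\partial\Omega}\boldsymbol\psi$. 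I expect the only point requiring care to be the verification that $({\bf u}_{\boldsymbol\psi}|_{\Omega_\pm},\pi_{\boldsymbol\psi}|_{\Omega_\pm},{\bf 0})$ indeed lies in ${\boldsymbol{\mathcal H}}^1_{p}(\Omega_\pm,{\boldsymbol{\mathcal L}})$ with control of its graph norm by $\|\boldsymbol\psi\|_{B_{p,p}^{-\frac{1}{p}}(\partial\Omega)^n}$, so that Lemma \ref{lem-add1}(ii) applies verbatim; everything else is a routine composition of bounded maps and elementary algebra on the jump and average of conormal derivatives.
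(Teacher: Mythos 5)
Your proposal is correct and follows essentially the same route as the paper, which derives the lemma directly from the well-posedness of the transmission problem \eqref{var-Brinkman-transmission-sl} (Theorem \ref{slp-var-apr-1-p}), the definitions \eqref{slp-S-vp-var}--\eqref{slp-S-oper-var}, the continuity of the trace and generalized conormal derivative operators, and the average/difference algebra on the transmission conditions. The one point you flagged as needing care — that $({\bf u}_{\boldsymbol\psi}|_{\Omega_\pm},\pi_{\boldsymbol\psi}|_{\Omega_\pm},{\bf 0})\in{\boldsymbol{\mathcal H}}^1_{p}(\Omega_\pm,\boldsymbol{\mathcal L})$ with graph norm controlled by $\|\boldsymbol\psi\|_{B_{p,p}^{-1/p}(\partial\Omega)^n}$ — is indeed immediate from the homogeneous Stokes equation in \eqref{var-Brinkman-transmission-sl} together with the a priori estimate, so there is no gap.
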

By using Theorem \ref{slp-var-apr-1-p-adj} we can also define the {\it single layer potential operators, ${\bf V}^*_{\partial\Omega}$ and ${\mathcal Q}^{s*}_{\partial\Omega}$, of the adjoint Stokes system \eqref{var-Brinkman-transmission-sl-adj}}.
\begin{definition}
\label{s-l-S-variational-variable-adj}
Let $\mathbb A$ satisfy conditions \eqref{Stokes-1} and \eqref{mu}.
Let $p_*\in (2,\infty )$ be as in Theorem $\ref{slp-var-apr-1-p-adj}$ and $p'\in {\mathcal R}(p_*,n)$, cf. \eqref{p*}.
Then for any $\boldsymbol\psi^* \in B_{p',p'}^{-\frac{1}{p'}}(\partial \Omega )^n$, we define the {\it single layer velocity and pressure potentials} with the density $\boldsymbol\psi^* $ for the adjoint Stokes operator $\boldsymbol{\mathcal L}^*$ defined in \eqref{Stokes-new-adjoint}, with coefficients $\mathbb A$, by setting
\begin{align*}
%\label{slp-S-vp-var-adj}
{\bf V}^*_{\partial\Omega}{\boldsymbol\psi^* }:={\bf v}_{{\boldsymbol\psi^* }},\ 
{\mathcal Q}^{s*}_{\partial\Omega}{\boldsymbol\psi^*}:=\pi _{{\boldsymbol\psi^* }},
\end{align*}
and the {\it operators} ${\boldsymbol{\mathcal V}}^*_{\partial\Omega}:B_{p',p'}^{-\frac{1}{p'}}(\partial \Omega )^n\to B_{p',p'}^{1-\frac{1}{p'}}(\partial \Omega )^n$ and
${\mathcal K}^*_{\partial\Omega}:B_{p',p'}^{-\frac{1}{p'}}(\partial \Omega )^n\to B_{p',p'}^{-\frac{1}{p'}}(\partial \Omega )^n$ as
\begin{align}
\label{slp-S-oper-var-adjoint}
{\boldsymbol{\mathcal V}}^*_{\partial\Omega}{\boldsymbol\psi^* }:=\gamma{\bf v}_{\boldsymbol\psi^* },\
{\mathcal K}^*_{\partial\Omega}\boldsymbol\psi^*:=
\frac{1}{2}\left(
{\bf T}^{*+}({\bf v}_{\boldsymbol\psi^* },\pi_{\boldsymbol\psi^* })
+{\bf T}^{*-}({\bf v}_{\boldsymbol\psi^* },\pi_{\boldsymbol\psi^* })\right),
\end{align}
where $({\bf v}_{{\boldsymbol\psi^*}},\pi _{{\boldsymbol\psi^*}})$ is the unique solution of the transmission problem \eqref{var-Brinkman-transmission-sl-adj} in ${\mathcal H}^1_{p'}({\mathbb R}^n)^n\times L_{p'}({\mathbb R}^n)$.
\end{definition}

\begin{lemma}
\label{self-adj}
Let $\mathbb A$ satisfy conditions \eqref{Stokes-1} and \eqref{mu}.
Let $p_*\in (2,\infty )$ be as in Theorem $\ref{slp-var-apr-1-p}$ and $p\in {\mathcal R}(p_*,n)$, cf. \eqref{p*}, $\boldsymbol\psi  \in B_{p,p}^{-\frac{1}{p}}(\partial \Omega )^n$, 
$\boldsymbol\psi^* \in B_{p',p'}^{-\frac{1}{p'}}(\partial \Omega )^n$.
Then 
\begin{align}
\label{sl-identities-var-1-adjoint}
&\left[\gamma {\bf V}^*_{\partial\Omega}{\boldsymbol\psi^* }\right]={\bf 0},\
{\bf T}^{*\pm }\left({\bf V}^*_{\partial\Omega}{\boldsymbol\psi^* },{\mathcal Q}^{s*}_{\partial\Omega}{\boldsymbol\psi^* }\right)
=\pm \frac{1}{2}\boldsymbol\psi^*
+{\mathcal K}^*_{\partial\Omega}\boldsymbol\psi^* ,\\
\label{sa-sl}
&\left\langle \boldsymbol\psi ,{\mathcal V}^*_{\partial\Omega}\boldsymbol\psi^* \right\rangle _{\partial \Omega }
=\left\langle  {\mathcal V}_{\partial\Omega}\boldsymbol\psi, \boldsymbol\psi^* \right\rangle _{\partial \Omega }.
\end{align}
\end{lemma}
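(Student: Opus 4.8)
The plan is to prove the two assertions \eqref{sl-identities-var-1-adjoint} and \eqref{sa-sl} separately, with the second being the substantive part. For \eqref{sl-identities-var-1-adjoint}, I would simply invoke the adjoint analogue of Lemma~\ref{continuity-sl-S-h-var}: by construction in Definition~\ref{s-l-S-variational-variable-adj}, the pair $({\bf v}_{\boldsymbol\psi^*},\pi_{\boldsymbol\psi^*})$ is the unique solution of the transmission problem \eqref{var-Brinkman-transmission-sl-adj}, so the transmission conditions there give $[\gamma{\bf v}_{\boldsymbol\psi^*}]={\bf 0}$ and $[{\bf T}^*({\bf v}_{\boldsymbol\psi^*},\pi_{\boldsymbol\psi^*})]=\boldsymbol\psi^*$; combining the latter with the definition \eqref{slp-S-oper-var-adjoint} of ${\mathcal K}^*_{\partial\Omega}$ as the average of the one-sided conormal derivatives yields ${\bf T}^{*\pm}({\bf V}^*_{\partial\Omega}\boldsymbol\psi^*,{\mathcal Q}^{s*}_{\partial\Omega}\boldsymbol\psi^*)=\pm\tfrac12\boldsymbol\psi^*+{\mathcal K}^*_{\partial\Omega}\boldsymbol\psi^*$, exactly as in the proof of \eqref{sl-identities-var-1}. (If the paper has not already stated this adjoint analogue as a separate lemma, I would first record it here, since its proof is verbatim that of Lemma~\ref{continuity-sl-S-h-var} with $\boldsymbol{\mathcal L}^*$ in place of $\boldsymbol{\mathcal L}$ and $p'$ in place of $p$.)

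For \eqref{sa-sl}, the key is to write both sides as volume integrals over ${\mathbb R}^n$ using the two Green identities and the symmetry relation $a^{*\alpha\beta}_{ij}=a^{\beta\alpha}_{ji}$. Set ${\bf u}:={\bf V}_{\partial\Omega}\boldsymbol\psi$, $\pi:={\mathcal Q}^s_{\partial\Omega}\boldsymbol\psi$ and ${\bf v}:={\bf V}^*_{\partial\Omega}\boldsymbol\psi^*$, $q:={\mathcal Q}^{s*}_{\partial\Omega}\boldsymbol\psi^*$; both are divergence-free in ${\mathbb R}^n\setminus\partial\Omega$ with zero trace jump, hence lie in ${\mathcal H}^1_p({\mathbb R}^n)^n$ and ${\mathcal H}^1_{p'}({\mathbb R}^n)^n$ respectively. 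First I would apply Lemma~\ref{lemma-add-new} (formula \eqref{jump-conormal-derivative-1f0}) with the test field ${\bf w}={\bf v}\in{\mathcal H}^1_{p'}({\mathbb R}^n)^n$, obtaining
\begin{align}
\label{sa-plan-1}
\big\langle[{\bf T}({\bf u},\pi)],\gamma{\bf v}\big\rangle_{\partial\Omega}
=\big\langle A^{\alpha\beta}\partial_\beta{\bf u},\partial_\alpha{\bf v}\big\rangle_{\Omega_+}
+\big\langle A^{\alpha\beta}\partial_\beta{\bf u},\partial_\alpha{\bf v}\big\rangle_{\Omega_-}
-\langle\pi,{\rm{div}}\,{\bf v}\rangle_{\Omega_+}-\langle\pi,{\rm{div}}\,{\bf v}\rangle_{\Omega_-},
\end{align}
where the pressure terms vanish since ${\rm{div}}\,{\bf v}=0$. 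Since $[{\bf T}({\bf u},\pi)]=\boldsymbol\psi$ (transmission condition in \eqref{var-Brinkman-transmission-sl}) and $\gamma{\bf v}=\gamma_\pm{\bf v}={\boldsymbol{\mathcal V}}^*_{\partial\Omega}\boldsymbol\psi^*$, the left side of \eqref{sa-plan-1} equals $\langle\boldsymbol\psi,{\mathcal V}^*_{\partial\Omega}\boldsymbol\psi^*\rangle_{\partial\Omega}$. Symmetrically, apply Lemma~\ref{jump-conormal-derivative-1} (formula \eqref{jump-conormal-derivative-1-adj}) with test field ${\bf w}={\bf u}\in{\mathcal H}^1_p({\mathbb R}^n)^n$: the left side becomes $\langle[{\bf T}^*({\bf v},q)],\gamma{\bf u}\rangle_{\partial\Omega}=\langle\boldsymbol\psi^*,{\mathcal V}_{\partial\Omega}\boldsymbol\psi\rangle_{\partial\Omega}$, and the right side is $\big\langle A^{*\alpha\beta}\partial_\beta{\bf v},\partial_\alpha{\bf u}\big\rangle_{\Omega_+}+\big\langle A^{*\alpha\beta}\partial_\beta{\bf v},\partial_\alpha{\bf u}\big\rangle_{\Omega_-}$ (again the pressure terms drop since ${\rm{div}}\,{\bf u}=0$). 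Now the defining relation $a^{*\alpha\beta}_{ij}=a^{\beta\alpha}_{ji}$ gives, pointwise,
$a^{*\alpha\beta}_{ij}\,\partial_\beta v_j\,\partial_\alpha u_i=a^{\beta\alpha}_{ji}\,\partial_\beta v_j\,\partial_\alpha u_i=a^{\alpha\beta}_{ij}\,\partial_\beta u_j\,\partial_\alpha v_i$
after relabelling the summation indices $i\leftrightarrow j$, $\alpha\leftrightarrow\beta$; integrating over $\Omega_\pm$ shows the two right-hand sides coincide, and therefore the two left-hand sides coincide, which is \eqref{sa-sl}.

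The only real subtlety is integrability and the validity of the duality pairings: one must check that the products $A^{\alpha\beta}\partial_\beta{\bf u}\cdot\partial_\alpha{\bf v}$ are genuinely in $L_1(\Omega_\pm)$, so that the volume integrals over $\Omega_+$ and $\Omega_-$ make sense and can legitimately be added. This follows from $\partial_\beta{\bf u}\in L_p(\Omega_\pm)$, $\partial_\alpha{\bf v}\in L_{p'}(\Omega_\pm)$, the bound $|a^{\alpha\beta}_{ij}|\le c_{\mathbb A}$, and Hölder's inequality — the same estimate that underlies the boundedness of the bilinear form $a_{{\mathbb R}^n}$ in \eqref{a-1-v}. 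One also uses that $\gamma_+{\bf v}=\gamma_-{\bf v}=\gamma{\bf v}$ and $\gamma_+{\bf u}=\gamma_-{\bf u}=\gamma{\bf u}$ (since both jumps vanish), so the boundary pairings in Lemmas~\ref{lemma-add-new} and~\ref{jump-conormal-derivative-1} are unambiguous and match the definitions \eqref{slp-S-oper-var}, \eqref{slp-S-oper-var-adjoint} of ${\mathcal V}_{\partial\Omega}$ and ${\mathcal V}^*_{\partial\Omega}$. With these points addressed, the identity \eqref{sa-sl} is immediate from the symmetry of the coefficient tensor, and I expect no further obstacle.
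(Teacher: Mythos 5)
Your proposal is correct and follows essentially the same route as the paper: the jump relations \eqref{sl-identities-var-1-adjoint} are obtained exactly as for \eqref{sl-identities-var-1} from the transmission conditions in \eqref{var-Brinkman-transmission-sl-adj} and the definition \eqref{slp-S-oper-var-adjoint}, while \eqref{sa-sl} is derived by applying the two Green identities \eqref{jump-conormal-derivative-1f0} and \eqref{jump-conormal-derivative-1-adj}, identifying the conormal-derivative jumps with the densities, and using the symmetry $a^{*\alpha\beta}_{ij}=a^{\beta\alpha}_{ji}$ with an index relabelling to match the two volume integrals. Your added remarks on $L_1$-integrability of the products and the unambiguity of the traces are consistent with, though not spelled out in, the paper's argument.
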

\begin{proof}
Formulas \eqref{sl-identities-var-1-adjoint} follow with arguments similar to those for \eqref{sl-identities-var-1}.
By definition, the couple $\big({\bf V}_{\partial\Omega}{\boldsymbol\psi },{\mathcal Q}_{\partial\Omega}^s{\boldsymbol\psi }\big)$ is the unique solution in ${\mathcal H}^1_{p}({\mathbb R}^n)^n\times L_p({\mathbb R}^n)$ of the transmission problem \eqref{var-Brinkman-transmission-sl} with the given datum $\boldsymbol\psi \in B_{p,p}^{-\frac{1}{p}}(\partial\Omega )^n$. 
Also $\big({\bf V}^*_{\partial\Omega}{\boldsymbol\psi^* },{\mathcal Q}^{s*}_{\partial\Omega}\boldsymbol\psi^* \big)$ is the unique solution in ${\mathcal H}^1_{p'}({\mathbb R}^n)^n\!\times \!L_{p'}({\mathbb R}^n)$ of the transmission problem for the adjoint Stokes system \eqref{var-Brinkman-transmission-sl-adj} with the given datum $\boldsymbol\psi^* \!\in \!B_{p',p'}^{-\frac{1}{p'}}(\partial \Omega )^n$. 
Then the Green formulas \eqref{jump-conormal-derivative-1f0} and \eqref{jump-conormal-derivative-1-adj} imply
\begin{align}
\label{jump-conormal-derivative-1-sl}
&\big\langle \big[{\bf T}\big({\bf V}_{\partial\Omega}{\boldsymbol\psi},{\mathcal Q}_{\partial\Omega}^s{\boldsymbol\psi }\big)\big],{\mathcal V}^*_{\partial\Omega}{\boldsymbol\psi^* }\big\rangle _{\partial \Omega }
=\big\langle A^{\alpha \beta }\partial _\beta \big({\bf V}_{\partial\Omega}{\boldsymbol\psi }\big),\partial _\alpha \big({\bf V}^*_{\partial\Omega}{\boldsymbol\psi^* }\big)\big\rangle _{{\mathbb R}^n}\\
\label{jump-conormal-derivative-1-sl-adj}
&\big\langle \big[{\bf T}^*\big({\bf V}^*_{\partial\Omega}{\boldsymbol\psi^* },{\mathcal Q}^{s*}_{\partial\Omega}\boldsymbol\psi^* \big)\big],{\mathcal V}_{\partial\Omega}{\boldsymbol\psi }\big\rangle _{\partial \Omega }
=\big\langle A^{*\alpha \beta }\partial_\beta\big({\bf V}^*_{\partial\Omega}{\boldsymbol\psi^*}\big),
\partial_\alpha \big({\bf V}_{\partial\Omega}{\boldsymbol\psi}\big)\big\rangle_{{\mathbb R}^n}
\nonumber\\
&\hspace{0.3em}=\big\langle a^{\alpha\beta}_{ij}\partial _\beta \big({\bf V}_{\partial\Omega}{\boldsymbol\psi }\big)_j,
\partial _\alpha\big({\bf V}^*_{\partial\Omega}{\boldsymbol\psi^* }\big)_i\big\rangle _{{\mathbb R}^n}
=\big\langle A^{\alpha \beta }\partial _\beta \big({\bf V}_{\partial\Omega}{\boldsymbol\psi }\big),\partial _\alpha \big({\bf V}^*_{\partial\Omega}{\boldsymbol\psi^* }\big)\big\rangle _{{\mathbb R}^n}.
\end{align}
Moreover, the second formulas in \eqref{sl-identities-var-1} and \eqref{sl-identities-var-1-adjoint} 
imply that
\begin{align}
\label{jsl}
\big[{\bf T}\big({\bf V}_{\partial\Omega}{\boldsymbol\psi },{\mathcal Q}_{\partial\Omega}^s{\boldsymbol\psi }\big)\big]=\boldsymbol\psi ,\
\left[{\bf T}^*\left({\bf V}^*_{\partial\Omega}{\boldsymbol\psi^* },{\mathcal Q}^{s*}_{\partial\Omega}\boldsymbol\psi^* \right)\right]=\boldsymbol\psi^* .
\end{align}
Then equality \eqref{sa-sl} follows from \eqref{jump-conormal-derivative-1-sl}, \eqref{jump-conormal-derivative-1-sl-adj} and \eqref{jsl} (see also \cite[Proposition 5.4]{Sa-Se} the constant coefficient Stokes system and $p=2$).
\end{proof}
\begin{remark}
\label{isotropic-case-1}
In the isotropic case \eqref{isotropic-1}, Definition $\ref{s-l-S-variational-variable-adj}$ reduces to Definition $\ref{s-l-S-variational-variable}$, and the single layer operator ${\boldsymbol{\mathcal V}}_{\partial\Omega}:B_{p,p}^{-\frac{1}{p}}(\partial \Omega )^n\to B_{p,p}^{1-\frac{1}{p}}(\partial \Omega )^n$ is self adjoint, i.e., formula \eqref{sa-sl} becomes
\begin{align}
\label{sa-sl-adj}
\left\langle \boldsymbol\psi ,{\mathcal V}_{\partial\Omega}\boldsymbol\psi^* \right\rangle _{\partial \Omega }
=\left\langle {\mathcal V}_{\partial\Omega}\boldsymbol\psi, \boldsymbol\psi^*\right\rangle _{\partial \Omega },\ \forall \, \boldsymbol\psi \in B_{p,p}^{-\frac{1}{p}}(\partial \Omega )^n,\, \boldsymbol\psi^* \in B_{p',p'}^{-\frac{1}{p'}}(\partial \Omega )^n.
\end{align}
\end{remark}
For a given operator $T:X\to Y$, we denote by ${\rm{Ker}}\left\{T:X\to Y\right\}:=\left\{x\in X:T(x)=0\right\}$ the null space of $T$.
Let $\boldsymbol \nu $ denote the outward unit normal to $\Omega $, which exists a.e. on $\partial \Omega $, and let
${\rm span}\{\boldsymbol \nu\} :=\{c \boldsymbol \nu :c\in {\mathbb R}\}$. 
For $p\in (1,\infty )$, consider the space
\begin{align}
\label{nu}
&B_{p,p;\boldsymbol \nu}^{\frac{1}{p'}}(\partial \Omega )^n:=\big\{\boldsymbol\Phi \in B_{p,p}^{\frac{1}{p'}}(\partial \Omega )^n:\langle \boldsymbol\Phi,\boldsymbol \nu \rangle _{\partial \Omega }=0\big\}.
\end{align}
Next we show main properties of the single layer operator (see also \cite[Lemma 4.9]{K-M-W-1} for $p=2$, \cite[Theorem 10.5.3]{M-W}, and \cite[Proposition 3.3(c)]{B-H}, \cite[Proposition 5.4]{Sa-Se} in the constant case).

Let us denote 
%\begin{align}
$\chi _{_{\Omega _{+}}}=
\left\{\begin{array}{ll}
1 & \mbox{ in }\ \Omega _+\\
0 & \mbox{ in }\ \Omega _-.
\end{array}
\right.$
%\end{align}
\begin{lemma}
\label{slp-properties}
Let $\mathbb A$ satisfy conditions \eqref{Stokes-1} and \eqref{mu}, $p_*\in (2,\infty )$ be as in Lemma~\ref{lemma-a47-1-Stokes} and $p\in {\mathcal R}(p_*,n)$, cf. \eqref{p*}.
Then 
\begin{align}
\label{sl-n}
&{\bf V}_{\partial\Omega}{\boldsymbol\nu }={\bf 0} \mbox{ in } {\mathbb R}^n,\ {\mathcal Q}^s_{\partial\Omega}{\boldsymbol\nu }=-\chi _{_{\Omega _{+}}},\\
\label{sl-n-p}
&{{\mathcal V}_{\partial\Omega}\boldsymbol \nu ={\bf 0} \mbox{ a.e. on } \partial \Omega }\,,
\\
\label{range-sl-v}
&{{\mathcal V}_{\partial\Omega}\boldsymbol\psi \in B_{p,p;\boldsymbol \nu }^{1-\frac{1}{p}}(\partial \Omega )^n,\ \forall \, \boldsymbol\psi \in B_{p,p}^{-\frac{1}{p}}(\partial \Omega )^n}.
\end{align}

In addition, for any $p\in \left[2,p_*\right)\cap \left[2,n\right)$,
\begin{align}
\label{kernel-sl-v}
{\rm{Ker}}\big\{\boldsymbol{\mathcal V}_{\partial\Omega}:B_{p,p}^{-\frac{1}{p}}(\partial\Omega )^n\to B_{p,p}^{1-\frac{1}{p}}(\partial\Omega )^n\big\}={\rm span}\{\boldsymbol \nu\}.
\end{align}

\end{lemma}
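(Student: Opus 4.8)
The plan is to deduce the identities \eqref{sl-n}--\eqref{range-sl-v} from an explicit solution of the single layer transmission problem and from the Gauss--Green formula, and to obtain \eqref{kernel-sl-v} by an energy argument in the Hilbert case $p=2$, to which I then reduce the general case. For \eqref{sl-n} I would check that $(\mathbf 0,-\chi _{_{\Omega _{+}}})\in{\mathcal H}^1_{p}({\mathbb R}^n)^n\times L_{p}({\mathbb R}^n)$ solves the transmission problem \eqref{var-Brinkman-transmission-sl} with datum $\boldsymbol\psi=\boldsymbol\nu$: the two Stokes equations and the trace jump condition hold trivially, and Definition \ref{conormal-derivative-var-Brinkman} gives, for every $\boldsymbol\Phi\in B_{p',p'}^{\frac1p}(\partial\Omega )^n$,
\begin{align*}
\big\langle{\bf T}^{+}(\mathbf 0,-\chi _{_{\Omega _{+}}}),\boldsymbol\Phi\big\rangle_{\partial\Omega}=\big\langle 1,{\rm{div}}(\gamma^{-1}_{+}\boldsymbol\Phi)\big\rangle_{\Omega_{+}}=\langle\boldsymbol\nu,\boldsymbol\Phi\rangle_{\partial\Omega},\qquad
\big\langle{\bf T}^{-}(\mathbf 0,-\chi _{_{\Omega _{+}}}),\boldsymbol\Phi\big\rangle_{\partial\Omega}=0,
\end{align*}
so that ${\bf T}^{+}(\mathbf 0,-\chi _{_{\Omega _{+}}})=\boldsymbol\nu$, ${\bf T}^{-}(\mathbf 0,-\chi _{_{\Omega _{+}}})=\mathbf 0$ and $[{\bf T}(\mathbf 0,-\chi _{_{\Omega _{+}}})]=\boldsymbol\nu$. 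The uniqueness part of Theorem \ref{slp-var-apr-1-p} then forces ${\bf V}_{\partial\Omega}\boldsymbol\nu=\mathbf 0$ and ${\mathcal Q}^s_{\partial\Omega}\boldsymbol\nu=-\chi _{_{\Omega _{+}}}$, and \eqref{sl-n-p} follows by applying $\gamma$ to \eqref{sl-n}. For \eqref{range-sl-v} I would use that ${\bf u}_{\boldsymbol\psi}|_{\Omega_{+}}\in H^1_{p}(\Omega_{+})^n$ with ${\rm{div}}\,{\bf u}_{\boldsymbol\psi}=0$ in $\Omega_{+}$, so that the Gauss--Green formula on the bounded Lipschitz domain $\Omega_{+}$ gives $\langle{\mathcal V}_{\partial\Omega}\boldsymbol\psi,\boldsymbol\nu\rangle_{\partial\Omega}=\langle\gamma_{+}{\bf u}_{\boldsymbol\psi},\boldsymbol\nu\rangle_{\partial\Omega}=\int_{\Omega_{+}}{\rm{div}}\,{\bf u}_{\boldsymbol\psi}\,dx=0$, which is exactly the asserted membership ${\mathcal V}_{\partial\Omega}\boldsymbol\psi\in B_{p,p;\boldsymbol\nu}^{1-\frac1p}(\partial\Omega)^n$.

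Turning to \eqref{kernel-sl-v}, the inclusion ${\rm span}\{\boldsymbol\nu\}\subseteq{\rm{Ker}}\{\boldsymbol{\mathcal V}_{\partial\Omega}\}$ is immediate from \eqref{sl-n-p} and linearity. For the converse, let $\boldsymbol\psi\in B_{p,p}^{-\frac1p}(\partial\Omega)^n$ satisfy $\boldsymbol{\mathcal V}_{\partial\Omega}\boldsymbol\psi=\mathbf 0$, i.e.\ $\gamma{\bf u}_{\boldsymbol\psi}=\mathbf 0$, and first treat $p=2$. Since $2'=2$, one may take ${\bf v}={\bf u}_{\boldsymbol\psi}\in{\mathcal H}^1({\mathbb R}^n)^n$ in the first equation of \eqref{single-layer-S-transmission}; together with $\gamma{\bf u}_{\boldsymbol\psi}=\mathbf 0$ and ${\rm{div}}\,{\bf u}_{\boldsymbol\psi}=0$ this yields $a_{{\mathbb R}^n}({\bf u}_{\boldsymbol\psi},{\bf u}_{\boldsymbol\psi})=0$, and the strong ellipticity \eqref{mu} forces $\nabla{\bf u}_{\boldsymbol\psi}=\mathbf 0$ in ${\mathbb R}^n$; since the only constant function in ${\mathcal H}^1({\mathbb R}^n)^n$ is $\mathbf 0$ (cf.\ \eqref{Kozono-1} and Remark \ref{Leray}), ${\bf u}_{\boldsymbol\psi}\equiv\mathbf 0$. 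Consequently $-\nabla\pi_{\boldsymbol\psi}=\boldsymbol{\mathcal L}({\bf u}_{\boldsymbol\psi},\pi_{\boldsymbol\psi})=\mathbf 0$ in each of $\Omega_{\pm}$, so $\pi_{\boldsymbol\psi}\equiv c_{\pm}$ on $\Omega_{\pm}$, and $\pi_{\boldsymbol\psi}\in L_2({\mathbb R}^n)$ together with $|\Omega_{-}|=\infty$ forces $c_{-}=0$, i.e.\ $\pi_{\boldsymbol\psi}=c_{+}\chi _{_{\Omega _{+}}}$. Finally the second transmission condition in \eqref{var-Brinkman-transmission-sl} and the conormal computation of the previous paragraph (with $c_{+}$ in place of $-1$) give $\boldsymbol\psi=[{\bf T}({\bf u}_{\boldsymbol\psi},\pi_{\boldsymbol\psi})]=[{\bf T}(\mathbf 0,c_{+}\chi _{_{\Omega _{+}}})]=-c_{+}\boldsymbol\nu\in{\rm span}\{\boldsymbol\nu\}$.

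For $p\in[2,p_*)\cap[2,n)$ I would reduce to the case $p=2$. Since $p\ge 2$ there is a continuous embedding $B_{p,p}^{-\frac1p}(\partial\Omega)^n\hookrightarrow B_{2,2}^{-\frac12}(\partial\Omega)^n$, so $\boldsymbol\psi$ is also an admissible density for the $p=2$ single layer potential (note $2\in{\mathcal R}(p_*,n)$). Granting that the single layer potential is consistent along the scale — i.e.\ that the $p$-potential and the $2$-potential of $\boldsymbol\psi$ coincide, equivalently that ${\bf V}_{\partial\Omega}\boldsymbol\psi=\boldsymbol{\mathcal N}_{{\mathbb R}^n}(-\gamma^{*}\boldsymbol\psi)$ does not depend on $p\in{\mathcal R}(p_*,n)$ — the relation $\boldsymbol{\mathcal V}_{\partial\Omega}\boldsymbol\psi=\mathbf 0$ (using also $B_{p,p}^{1-\frac1p}(\partial\Omega)\hookrightarrow B_{2,2}^{\frac12}(\partial\Omega)$) forces the vanishing of the $p=2$ boundary single layer operator at $\boldsymbol\psi$, whence $\boldsymbol\psi\in{\rm span}\{\boldsymbol\nu\}$ by the previous step. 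I expect this consistency to be the crux: for $p>2$ the function ${\bf u}_{\boldsymbol\psi}$ no longer belongs to ${\mathcal H}^1_{p'}({\mathbb R}^n)^n$, so it cannot be used as a test function and the one-line energy argument above is unavailable — which is exactly why the statement is restricted to $p\ge 2$. The consistency itself would be obtained either from the interpolation-scale structure of $\mathfrak{T}_{{\mathbb R}^n}$ in Lemma \ref{lemma-a47-1-Stokes} combined with a uniqueness (Liouville-type) argument for the homogeneous Stokes system in ${\mathbb R}^n$, or by approximating $\boldsymbol\psi$ by smooth densities, for which the variational solution is a fixed function lying both in ${\mathcal H}^1_{p}({\mathbb R}^n)^n\times L_{p}({\mathbb R}^n)$ and in ${\mathcal H}^1({\mathbb R}^n)^n\times L_2({\mathbb R}^n)$, and passing to the limit via the continuity in Lemma \ref{continuity-sl-S-h-var} and the above Besov embeddings.
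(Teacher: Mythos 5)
Your proposal is correct, and for most of the lemma it follows the same route as the paper: the explicit solution $(\mathbf 0,-\chi_{_{\Omega_+}})$ for the datum $\boldsymbol\nu$ (which you verify in more detail than the paper, computing ${\bf T}^{\pm}$ directly from \eqref{conormal-derivative-var-Brinkman-3}), the energy argument via $a_{\mathbb R^n}({\bf u}_{\boldsymbol\psi_0},{\bf u}_{\boldsymbol\psi_0})=0$ and the embedding ${\mathcal H}^1({\mathbb R}^n)^n\hookrightarrow L_{\frac{2n}{n-2}}({\mathbb R}^n)^n$ for the kernel at $p=2$, and the embedding $B_{p,p}^{-\frac1p}(\partial\Omega)^n\hookrightarrow H^{-\frac12}(\partial\Omega)^n$ to transfer the kernel identification to $p\in[2,p_*)\cap[2,n)$. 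The one genuinely different step is \eqref{range-sl-v}: the paper derives it from the adjoint machinery, namely the relation \eqref{sa-sl} together with ${\mathcal V}^*_{\partial\Omega}\boldsymbol\nu=\mathbf 0$ (the analogue of \eqref{sl-n} for $\boldsymbol{\mathcal L}^*$), whereas you use the divergence theorem on $\Omega_+$ for the divergence-free field ${\bf u}_{\boldsymbol\psi}|_{\Omega_+}\in H^1_p(\Omega_+)^n$ to get $\langle\gamma_+{\bf u}_{\boldsymbol\psi},\boldsymbol\nu\rangle_{\partial\Omega}=0$ directly. Your argument is more elementary and avoids Theorem \ref{slp-var-apr-1-p-adj} and Lemma \ref{self-adj} entirely; the paper's version has the advantage of simultaneously setting up the adjoint identities \eqref{sl-n-adj} that are reused elsewhere. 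Finally, you are right to flag the consistency of ${\bf V}_{\partial\Omega}\boldsymbol\psi$ across the $L_p$ scale as the point needing justification in the last step: the paper's one-line inclusion of kernels tacitly assumes that the $p$-potential and the $2$-potential of a density in $B_{p,p}^{-\frac1p}(\partial\Omega)^n\subset H^{-\frac12}(\partial\Omega)^n$ coincide, and either of the two mechanisms you propose (uniqueness in a common setting, or density of smooth densities plus the continuity in Lemma \ref{continuity-sl-S-h-var}) closes that gap; on this point your write-up is actually more careful than the paper's.
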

\begin{proof}
First, note that 
Theorem $\ref{slp-var-apr-1-p}$ implies that the transmission problem \eqref{var-Brinkman-transmission-sl} with the datum $\boldsymbol\psi =\boldsymbol \nu \in B_{p,p}^{-\frac{1}{p}}(\partial \Omega )^n$ is well-posed. Moreover, the pair
%\begin{align}
%\label{solution-nu}
$\left({\bf u}_{\boldsymbol \nu },\pi _{\boldsymbol \nu }\right)=\big({\bf 0},-\chi _{_{\Omega _{+}}}\big)\in {\mathcal H}^1_{p}({\mathbb R}^n)^n\times L_p({\mathbb R}^n)$
%\end{align}
is the unique solution of this transmission problem. Then relations \eqref{sl-n} and \eqref{sl-n-p} follow from Definition \ref{s-l-S-variational-variable}. 
Thus, 
\begin{align}\label{sK}
{\rm span}\{\boldsymbol \nu\}
\subseteq {\rm{Ker}}\big\{{\boldsymbol{\mathcal V}}_{\partial\Omega}:B_{p,p}^{-\frac{1}{p}}(\partial\Omega )^n\to B_{p,p}^{1-\frac{1}{p}}(\partial\Omega )^n\big\}\quad
\forall\ p\in {\mathcal R}(p_*,n).
\end{align}
Similarly, 
\begin{align}
\label{sl-n-adj}
{\bf V}^*_{\partial\Omega}{\boldsymbol\nu }={\bf 0} \mbox{ in } {\mathbb R}^n,\ \
{{\mathcal V}^*_{\partial\Omega}\boldsymbol \nu ={\bf 0} \mbox{ a.e. on } \partial \Omega },
\end{align}
where ${\mathcal V}^*_{\partial\Omega}:B_{p',p'}^{-\frac{1}{p'}}(\partial\Omega )^n\to B_{p',p'}^{1-\frac{1}{p'}}(\partial\Omega )^n$ is the single layer operator for the adjoint Stokes system \eqref{var-Brinkman-transmission-sl-adj} (see Definition \ref{s-l-S-variational-variable-adj}).
By using formula \eqref{sa-sl} for the densities $\boldsymbol\psi \!\in \! B_{p,p}^{-\frac{1}{p}}(\partial\Omega )^n$ and $\boldsymbol\psi^*=\boldsymbol\nu \!\in \! B_{p',p'}^{-\frac{1}{p'}}(\partial\Omega )^n$, and the second relation in \eqref{sl-n-adj}, we obtain relation \eqref{range-sl-v}.

Next we determine the kernel of the single layer operator in case $p=2$. To do so, we assume that $\boldsymbol \psi _0\in {\rm{Ker}}\left\{{\boldsymbol{\mathcal V}}_{\partial\Omega}
:H^{-\frac{1}{2}}(\partial\Omega )^n\to H^{\frac{1}{2}}(\partial\Omega )^n\right\}$. 
Let
$({\bf u}_{\boldsymbol \psi _0},\pi_{\boldsymbol \psi _0})
=\big({\bf V}_{\partial \Omega}{\boldsymbol\psi _0},
{\mathcal Q}^s_{\partial\Omega}{\boldsymbol\psi _0}\big)$ 
be the unique solution in ${\mathcal H}^1({\mathbb R}^n)^n\times L_2({\mathbb R}^n)$ of the transmission problem \eqref{var-Brinkman-transmission-sl} with given datum $\boldsymbol \psi _0$. 
According to formula \eqref{jump-conormal-derivative-1f0} and the assumption that $\gamma {\bf u}_{\boldsymbol \psi _0}={\bf 0}$ a.e. on $\partial \Omega $, we obtain that
\begin{align}
a_{{\mathbb R}^n}\big({\bf u}_{\boldsymbol \psi _0},{\bf u}_{\boldsymbol \psi _0}\big)=\left\langle [{\bf T}({\bf u}_{\boldsymbol \psi _0},\pi_{\boldsymbol \psi _0})],\gamma {\bf u}_{\boldsymbol \psi _0}\right\rangle _{\partial \Omega }=0.
\end{align}
In addition, assumption \eqref{mu} yields that
%\begin{align}
$a_{{\mathbb R}^n}\big({\bf u}_{\boldsymbol \psi _0},{\bf u}_{\boldsymbol \psi _0}\big)\geq c_{\mathbb A}^{-1}\|\nabla ({\bf u}_{\boldsymbol \psi _0})\|_{L_2({\mathbb R}^n)^n}^2.$
%\end{align}
Therefore, ${\bf u}_{\boldsymbol \psi _0}$ is a constant field, but the membership of ${\bf u}_{\boldsymbol \psi _0}$ in ${\mathcal H}^1({\mathbb R}^n)^n\hookrightarrow L_{\frac{2n}{n-2}}({\mathbb R}^n)^n$ shows that ${\bf u}_{\boldsymbol \psi _0}={\bf 0}$ in ${\mathbb R}^n$. 
Moreover, the Stokes equation satisfied by ${\bf u}_{\boldsymbol \psi _0}$ and $\pi _{\boldsymbol \psi _0}$ in ${\mathbb R}^n\setminus \partial \Omega $ and the membership of $\pi _{\boldsymbol \psi _0}$ in $L_2({\mathbb R}^n)$ show that $\pi _{\boldsymbol \psi _0}=c_0\chi _{_{\Omega _{+}}}$ in ${\mathbb R}^n$, where $c_0\in {\mathbb R}$.
Then formula \eqref{jump-conormal-derivative-1f0} and the divergence theorem yield that
%\begin{align}
$\langle [{\bf T}({\bf u}_{\boldsymbol \psi _0},\pi_{\boldsymbol \psi _0})],\gamma {\bf w}\rangle _{\partial \Omega }=-\langle \pi _{\boldsymbol \psi _0},{\rm{div}}\, {\bf w}\rangle _{{\mathbb R}^n}=-c_0\langle \boldsymbol \nu ,\gamma {\bf w}\rangle _{\partial \Omega },$ for any ${\bf w}\in {\mathcal D}({\mathbb R}^n)^n,$
%\end{align}
and accordingly that $\boldsymbol \psi _0=[{\bf T}({\bf u}_{\boldsymbol \psi _0 },\pi_{\boldsymbol \psi _0})]=-c_0\boldsymbol \nu $. Hence, \eqref{kernel-sl-v} follows for $p=2$. 

Moreover, for any $p\in \left[2,p_*\right)\cap \left[2,n\right)$ by the inclusion 
$B_{p,p}^{-\frac{1}{p}}(\partial\Omega )^n\hookrightarrow H^{-\frac{1}{2}}(\partial\Omega )^n$
 we have 
$$
{\rm{Ker}}\big\{{\boldsymbol{\mathcal V}}_{\partial\Omega}:B_{p,p}^{-\frac{1}{p}}(\partial\Omega )^n\to B_{p,p}^{1-\frac{1}{p}}(\partial\Omega )^n\big\}\subseteq
{\rm{Ker}}\big\{{\boldsymbol{\mathcal V}}_{\partial\Omega}:H^{-\frac{1}{2}}(\partial\Omega )^n\to H^{\frac{1}{2}}(\partial\Omega )^n\big\}={\rm span}\{\boldsymbol \nu\}.
$$
Then by \eqref{sK} we conclude that \eqref{kernel-sl-v} holds also for any $p\in \left[2,p_*\right)\cap \left[2,n\right)$.
\end{proof}

Next we 
show the following property (see also \cite[Theorem 10.5.3]{M-W}, \cite[Proposition 3.3(d)]{B-H}, \cite[Proposition 5.5]{Sa-Se} in the constant case).
\begin{lemma}
\label{isom-sl-v}
Let $\mathbb A$ satisfy conditions \eqref{Stokes-1} and \eqref{mu}. Then the following operator is an isomorphism,
\begin{align}
\label{sl-v-isom}
\boldsymbol{\mathcal V}_{\partial\Omega}:H^{-\frac{1}{2}}(\partial\Omega )^n/{\rm span}\{\boldsymbol \nu\} \to H_{\boldsymbol \nu }^{\frac{1}{2}}(\partial\Omega )^n\,.
\end{align}
\end{lemma}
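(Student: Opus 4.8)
The claim is that $\boldsymbol{\mathcal V}_{\partial\Omega}$ descends to an isomorphism from $H^{-\frac12}(\partial\Omega)^n/{\rm span}\{\boldsymbol\nu\}$ onto $H_{\boldsymbol\nu}^{\frac12}(\partial\Omega)^n$. The plan is to take $p=2$ throughout and assemble the statement from three ingredients already available: (i) the mapping property $\boldsymbol{\mathcal V}_{\partial\Omega}:H^{-\frac12}(\partial\Omega)^n\to H^{\frac12}(\partial\Omega)^n$ is bounded (Lemma \ref{continuity-sl-S-h-var}); (ii) its kernel is exactly ${\rm span}\{\boldsymbol\nu\}$ and its range lies in $H_{\boldsymbol\nu}^{\frac12}(\partial\Omega)^n$ (Lemma \ref{slp-properties}, relations \eqref{range-sl-v} and \eqref{kernel-sl-v} with $p=2$); (iii) a coercivity estimate for the associated bilinear form on the quotient, which then forces injectivity on the quotient, closed range, and — combined with a density/duality argument — surjectivity onto $H_{\boldsymbol\nu}^{\frac12}(\partial\Omega)^n$. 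So the induced map is well-defined and injective on the quotient by (ii), bounded by (i), and it remains to prove it is bounded below and has dense range; the open mapping theorem then finishes.

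\textbf{The coercivity step.} First I would record the Green-type identity that for $\boldsymbol\psi\in H^{-\frac12}(\partial\Omega)^n$, applying \eqref{jump-conormal-derivative-1f0} to $({\bf u}_{\boldsymbol\psi},\pi_{\boldsymbol\psi})=({\bf V}_{\partial\Omega}\boldsymbol\psi,{\mathcal Q}^s_{\partial\Omega}\boldsymbol\psi)$ and using the jump relation $[{\bf T}({\bf V}_{\partial\Omega}\boldsymbol\psi,{\mathcal Q}^s_{\partial\Omega}\boldsymbol\psi)]=\boldsymbol\psi$ from \eqref{sl-identities-var-1} together with $\gamma_+{\bf u}_{\boldsymbol\psi}=\gamma_-{\bf u}_{\boldsymbol\psi}={\boldsymbol{\mathcal V}}_{\partial\Omega}\boldsymbol\psi$ (and ${\rm div}\,{\bf u}_{\boldsymbol\psi}=0$), gives
\begin{align}
\label{plan-coerc}
\big\langle \boldsymbol\psi,{\boldsymbol{\mathcal V}}_{\partial\Omega}\boldsymbol\psi\big\rangle_{\partial\Omega}
=\big\langle A^{\alpha\beta}\partial_\beta{\bf u}_{\boldsymbol\psi},\partial_\alpha{\bf u}_{\boldsymbol\psi}\big\rangle_{{\mathbb R}^n}
=a_{{\mathbb R}^n}({\bf u}_{\boldsymbol\psi},{\bf u}_{\boldsymbol\psi})
\geq c_{\mathbb A}^{-1}\|\nabla{\bf u}_{\boldsymbol\psi}\|_{L_2({\mathbb R}^n)^{n\times n}}^2\,,
\end{align}
using the strong ellipticity \eqref{mu}. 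The right-hand side controls $\|{\bf u}_{\boldsymbol\psi}\|_{{\mathcal H}^1({\mathbb R}^n)^n}^2$ by the seminorm equivalence \eqref{seminorm-R3}, and by Theorem \ref{slp-var-apr-1-p} (equivalently, by continuity of the trace $\gamma_\pm$ and the stability estimate for the transmission problem) one has $\|\boldsymbol\psi\|_{H^{-\frac12}(\partial\Omega)^n/{\rm span}\{\boldsymbol\nu\}}\lesssim \|{\bf u}_{\boldsymbol\psi}\|_{{\mathcal H}^1({\mathbb R}^n)^n}$: indeed $\boldsymbol\psi=[{\bf T}({\bf u}_{\boldsymbol\psi},\pi_{\boldsymbol\psi})]$ and the generalized-conormal-derivative operator is bounded on ${\boldsymbol{\mathcal H}}^1({\Omega_\pm},\boldsymbol{\mathcal L})$ by Lemma \ref{lem-add1}, the pressure being itself controlled by Theorem \ref{slp-var-apr-1-p}. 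Combining with Cauchy–Schwarz on the left of \eqref{plan-coerc}, $\big|\langle\boldsymbol\psi,{\boldsymbol{\mathcal V}}_{\partial\Omega}\boldsymbol\psi\rangle_{\partial\Omega}\big|\le \|\boldsymbol\psi\|_{H^{-\frac12}}\|{\boldsymbol{\mathcal V}}_{\partial\Omega}\boldsymbol\psi\|_{H^{\frac12}}$ — and since the pairing is insensitive to adding multiples of $\boldsymbol\nu$ to $\boldsymbol\psi$ (as $\langle\boldsymbol\nu,{\boldsymbol{\mathcal V}}_{\partial\Omega}\cdot\rangle$ vanishes by the self-adjointness relation \eqref{sa-sl} and \eqref{sl-n-p}) — I get the lower bound
\begin{align}
\label{plan-lb}
\|{\boldsymbol{\mathcal V}}_{\partial\Omega}\boldsymbol\psi\|_{H^{\frac12}(\partial\Omega)^n}
\ \ge\ c\,\|\boldsymbol\psi\|_{H^{-\frac12}(\partial\Omega)^n/{\rm span}\{\boldsymbol\nu\}}\,,
\end{align}
valid for all $\boldsymbol\psi$, with $c>0$ depending only on $\partial\Omega$, $c_{\mathbb A}$, $n$. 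This gives injectivity on the quotient and closed range.

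\textbf{Surjectivity onto $H_{\boldsymbol\nu}^{\frac12}$ and conclusion.} By \eqref{range-sl-v} the range of $\boldsymbol{\mathcal V}_{\partial\Omega}$ is contained in $H_{\boldsymbol\nu}^{\frac12}(\partial\Omega)^n$, and by \eqref{plan-lb} it is closed there; it remains to show it is dense. For this I would use the self-adjointness \eqref{sa-sl}: if $\boldsymbol\Phi\in (H_{\boldsymbol\nu}^{\frac12}(\partial\Omega)^n)'$ annihilates the range, represent $\boldsymbol\Phi$ by some $\boldsymbol\psi^*\in H^{-\frac12}(\partial\Omega)^n$ (modulo $\boldsymbol\nu$, which is the annihilator of $H_{\boldsymbol\nu}^{\frac12}$), so that $\langle {\boldsymbol{\mathcal V}}_{\partial\Omega}\boldsymbol\psi,\boldsymbol\psi^*\rangle_{\partial\Omega}=0$ for all $\boldsymbol\psi$; by \eqref{sa-sl} this reads $\langle\boldsymbol\psi,{\boldsymbol{\mathcal V}}^*_{\partial\Omega}\boldsymbol\psi^*\rangle_{\partial\Omega}=0$ for all $\boldsymbol\psi\in H^{-\frac12}(\partial\Omega)^n$, hence ${\boldsymbol{\mathcal V}}^*_{\partial\Omega}\boldsymbol\psi^*=0$; the kernel computation for the adjoint single layer operator (identical to the one in the proof of Lemma \ref{slp-properties}, now applied to $\boldsymbol{\mathcal L}^*$, whose coefficients satisfy the same ellipticity) gives $\boldsymbol\psi^*\in{\rm span}\{\boldsymbol\nu\}$, i.e. $\boldsymbol\Phi=0$. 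Thus the range is dense and closed, hence all of $H_{\boldsymbol\nu}^{\frac12}(\partial\Omega)^n$. The induced map $H^{-\frac12}/{\rm span}\{\boldsymbol\nu\}\to H_{\boldsymbol\nu}^{\frac12}$ is therefore a continuous bijection with a lower bound \eqref{plan-lb}, hence an isomorphism.

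\textbf{Main obstacle.} The routine parts are the mapping properties and the kernel identification, all essentially quoted. The delicate point is making the lower bound \eqref{plan-lb} genuinely uniform on the quotient — that is, controlling $\|\boldsymbol\psi\|_{H^{-\frac12}/{\rm span}\{\boldsymbol\nu\}}$ from above by $\|{\bf u}_{\boldsymbol\psi}\|_{{\mathcal H}^1({\mathbb R}^n)^n}$ alone, which requires the continuous dependence of the full solution $({\bf u}_{\boldsymbol\psi},\pi_{\boldsymbol\psi})$ of the transmission problem on the data (Theorem \ref{slp-var-apr-1-p}) together with boundedness of the generalized conormal derivative, and checking that all quantities in \eqref{plan-coerc}–\eqref{plan-lb} are unchanged when $\boldsymbol\psi$ is modified by a multiple of $\boldsymbol\nu$. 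I expect this bookkeeping — rather than any deep new estimate — to be where the care is needed.
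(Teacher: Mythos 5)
Your overall architecture matches the paper's: both hinge on the coercivity of the quadratic form $\boldsymbol\psi\mapsto\langle\boldsymbol\psi,\boldsymbol{\mathcal V}_{\partial\Omega}\boldsymbol\psi\rangle_{\partial\Omega}$ on the quotient $H^{-\frac12}(\partial\Omega)^n/{\rm span}\{\boldsymbol \nu\}$ (the paper's inequality \eqref{H-elliptic-sl-v}), combined with the kernel and range identifications \eqref{kernel-sl-v} and \eqref{range-sl-v}. The difference in the endgame is immaterial: you finish by closed range plus a density argument through the kernel of the adjoint single layer operator, while the paper invokes the Lax--Milgram lemma on the quotient space, whose dual is exactly $H^{\frac12}_{\boldsymbol \nu}(\partial\Omega)^n$; either route works once the coercivity estimate is in hand.

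The genuine gap is in your justification of the estimate $\|\boldsymbol\psi\|_{H^{-\frac12}(\partial\Omega)^n/{\rm span}\{\boldsymbol \nu\}}\lesssim\|{\bf u}_{\boldsymbol\psi}\|_{{\mathcal H}^1({\mathbb R}^n)^n}$ --- precisely the step you yourself flag as delicate. Your proposed derivation ($\boldsymbol\psi=[{\bf T}({\bf u}_{\boldsymbol\psi},\pi_{\boldsymbol\psi})]$, boundedness of the generalized conormal derivative from Lemma \ref{lem-add1}, and ``the pressure being itself controlled by Theorem \ref{slp-var-apr-1-p}'') is circular. Lemma \ref{lem-add1} bounds $[{\bf T}({\bf u}_{\boldsymbol\psi},\pi_{\boldsymbol\psi})]$ by $\|{\bf u}_{\boldsymbol\psi}\|_{{\mathcal H}^1({\mathbb R}^n)^n}+\|\pi_{\boldsymbol\psi}\|_{L_2({\mathbb R}^n)}$, and Theorem \ref{slp-var-apr-1-p} controls $\|\pi_{\boldsymbol\psi}\|_{L_2({\mathbb R}^n)}$ only by $\|\boldsymbol\psi\|_{H^{-\frac12}(\partial\Omega)^n}$ itself, which is the quantity you are trying to estimate; no bound of $\pi_{\boldsymbol\psi}$ by ${\bf u}_{\boldsymbol\psi}$ alone is available without a separate Ne\v{c}as-type inequality on ${\mathbb R}^n\setminus\partial\Omega$ modulo the constant that generates ${\rm span}\{\boldsymbol \nu\}$. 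The paper's mechanism avoids the pressure altogether: every $\boldsymbol \Phi\in H^{\frac12}_{\boldsymbol \nu}(\partial\Omega)^n$ is the trace of some ${\bf w}=\gamma^{-1}\boldsymbol \Phi$ in the divergence-free space ${\mathcal H}^1_{\rm div}({\mathbb R}^n)^n$ with $\|{\bf w}\|_{{\mathcal H}^1({\mathbb R}^n)^n}\lesssim\|\boldsymbol \Phi\|_{H^{\frac12}(\partial\Omega)^n}$ (by \cite[Proposition 4.4]{Sa-Se}), so that in the Green identity \eqref{jump-conormal-derivative-1f0} the term $\langle\pi_{\boldsymbol\psi},{\rm div}\,{\bf w}\rangle$ vanishes identically and $\langle\boldsymbol\psi,\boldsymbol \Phi\rangle_{\partial\Omega}=a_{{\mathbb R}^n}({\bf u}_{\boldsymbol\psi},{\bf w})$ is controlled by $\|{\bf u}_{\boldsymbol\psi}\|_{{\mathcal H}^1({\mathbb R}^n)^n}\|\boldsymbol \Phi\|_{H^{\frac12}(\partial\Omega)^n}$ alone, cf. \eqref{estimate-norm}; the duality of $H^{\frac12}_{\boldsymbol \nu}(\partial\Omega)^n$ with the quotient then yields \eqref{onto-div-1}. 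You need to replace your conormal-derivative argument by this divergence-free testing; the rest of your proof then goes through.
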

\begin{proof}
Let {$\left[\!\left[\cdot \right]\!\right]$} denote the classes in $H^{-\frac{1}{2}}(\partial\Omega )^n/{\rm span}\{\boldsymbol \nu\}$, $\left[\!\left[\boldsymbol \psi \right]\!\right]\!=\!\boldsymbol \psi \!+\!{\rm span}\{\boldsymbol \nu\} $, with $\boldsymbol \psi \in H^{-\frac{1}{2}}(\partial\Omega )^n$.
The invertibility is based on the coercivity inequality
\begin{align}
\label{H-elliptic-sl-v}
\left\langle \left[\!\left[\boldsymbol \psi \right]\!\right],\boldsymbol{\mathcal V}_{\partial\Omega}\left[\!\left[\boldsymbol \psi \right]\!\right]\right\rangle _{\partial \Omega }\geq c\left\|\left[\!\left[\boldsymbol \psi \right]\!\right]\right\|_{H^{-\frac{1}{2}}(\partial\Omega )^n/{{\rm span}\{\boldsymbol \nu\}}}^2,\ \forall \, \left[\!\left[\boldsymbol \psi \right]\!\right]\in {H^{-\frac{1}{2}}(\partial\Omega )^n/{{\rm span}\{\boldsymbol \nu\}}},
\end{align}
which follows
by the arguments similar to those in \cite[Lemma 4.10]{K-M-W-1} and \cite[Proposition 5.5]{Sa-Se}. 
Indeed, 
according to formula \eqref{jump-conormal-derivative-1f0}, Definition \ref{s-l-S-variational-variable}, relations \eqref{range-sl-v}, \eqref{kernel-sl-v}, and inequality \eqref{a-1-v2-S}, we obtain that
\begin{align}
\label{onto-div-2}
\left\langle \left[\!\left[\boldsymbol \psi \right]\!\right],\boldsymbol{\mathcal V}_{\partial\Omega}\left[\!\left[\boldsymbol \psi \right]\!\right]\right\rangle _{\partial \Omega }&=\left\langle \boldsymbol \psi ,\boldsymbol{\mathcal V}_{\partial\Omega}\boldsymbol \psi \right\rangle _{\partial \Omega }=\langle [{\bf T}({\bf u}_{\boldsymbol \psi},\pi_{\boldsymbol \psi})],\gamma {\bf u}_{\boldsymbol \psi}\rangle _{\partial \Omega }\nonumber\\
&=a_{\mathbb R^n}({\bf u}_{\boldsymbol \psi },{\bf u}_{\boldsymbol \psi })
\geq c_{\mathbb A}^{-1}c_1\|{\bf u}_{\boldsymbol \psi }\|_{\mathcal H^1({\mathbb R}^n)^n}^2,
\end{align}
where ${\bf u}_{\boldsymbol \psi }\!=\!{\bf V}_{\partial\Omega}\boldsymbol \psi $, $\pi_{\boldsymbol\psi}\!=\!{\mathcal Q}^s_{\partial\Omega}{\boldsymbol\psi}$.
Since the trace operator
%\begin{align}
%\label{onto-div}
$\gamma :{\mathcal H}_{{\rm div}}^1({\mathbb R}^n)^n\!\to \!H_{\boldsymbol \nu }^{\frac{1}{2}}(\partial\Omega )^n$
%\end{align}
is surjective with a bounded right inverse $\gamma ^{-1}:H_{\boldsymbol \nu }^{\frac{1}{2}}(\partial\Omega )^n\!\to \!{\mathcal H}_{{\rm div}}^1({\mathbb R}^n)^n$ (cf., e.g., \cite[Proposition 4.4]{Sa-Se}), for any $\boldsymbol \Phi \!\in \!H_{\boldsymbol \nu }^{\frac{1}{2}}(\partial\Omega )^n$ we have the inclusion 
${\bf w}:=\gamma ^{-1}\boldsymbol \Phi \in {\mathcal H}_{{\rm div}}^1({\mathbb R}^n)^n$. 
Then there exists a constant $c'\!=\!c'(\partial \Omega ,n)\!>\!0$ such that
\begin{align}
\label{estimate-norm}
|\langle \left[\!\left[\boldsymbol \psi \right]\!\right],\boldsymbol \Phi \rangle _{\partial \Omega }|&=|\langle \boldsymbol \psi ,\boldsymbol \Phi \rangle _{\partial \Omega }|=|\langle [{\bf T}({\bf u}_{\boldsymbol \psi },\pi_{\boldsymbol \psi })],\gamma {\bf w}\rangle _{\partial \Omega }|=|a_{\mathbb R^n}({\bf u}_{\boldsymbol \psi },{\bf w})|\nonumber\\
&\!\leq \!c_{\mathbb A}\|{\bf u}_{\boldsymbol \psi }\|_{{\mathcal H}^1({\mathbb R}^n)^n}\|\gamma ^{-1}\boldsymbol \Phi \|_{{\mathcal H}^1({\mathbb R}^n)^n}\leq c_{\mathbb A}c'\|{\bf u}_{\boldsymbol \psi }\|_{{\mathcal H}^1({\mathbb R}^n)^n}\|\boldsymbol \Phi \|_{H^{\frac{1}{2}}(\partial\Omega )^n}\,.
\end{align}
Then formula \eqref{estimate-norm} and the duality of the spaces 
$H_{\boldsymbol \nu }^{\frac{1}{2}}(\partial\Omega )^n$ and 
$H^{-\frac{1}{2}}(\partial\Omega )^n/{{\rm span}\{\boldsymbol \nu\}}$ imply that
\begin{align}
\label{onto-div-1}
\|\left[\!\left[\boldsymbol \psi\right]\!\right]\|_{H^{-\frac{1}{2}}(\partial\Omega )^n/{\rm span}\{\boldsymbol \nu\}}
\leq c_{\mathbb A}c'\|{\bf u}_{\boldsymbol \psi }\|_{{\mathcal H}^{1}({\mathbb R}^n)^n}.
\end{align}
Then inequality \eqref{H-elliptic-sl-v} follows from inequalities \eqref{onto-div-2} and \eqref{onto-div-1}. Finally, the Lax-Milgram lemma implies that the single layer potential operator \eqref{sl-v-isom} is an isomorphism, as asserted.
\end{proof}
\subsection{The double layer potential operator for the Stokes system with $L_{\infty }$ coefficients}
Next we present the well-posedness results for a transmission problem used for the definition of 
$L_{\infty }$-coefficient Stokes double layer potentials in the space $B_{p,p}^{1-\frac{1}{p}}(\partial \Omega )^n$ with
$p$ in some open set containing $2$ and $n\geq 3$
(cf. \cite[Propositions 6.1, 7.1]{Sa-Se} in the case $n=2,3$, $p=2$ and ${\mathbb A}=\mathbb I$). 
Recall that if  
$\mathbf u\in L_{p,\rm loc}(\mathbb R^n)^n$ is such that $\mathbf u|_{\Omega_+}\in {H}^1_{p}(\Omega _+)^n$, 
$\mathbf u|_{\Omega_-}\in  {\mathcal H}^1_{p}(\Omega _{-})^n$, we will denote this as 
$\mathbf u\in\mathcal H^1_p(\mathbb R^n\setminus\partial\Omega)$ and employ the norm
$\|u\|^p_{\mathcal H^1_p(\mathbb R^n\setminus\partial\Omega)}:=\|u\|^p_{H^1_p(\Omega_+)}+\|u\|^p_{\mathcal H^1_p(\Omega_-)}$.
\begin{theorem}
\label{dlp-var-apr-var-p}
Let $\mathbb A$ satisfy conditions \eqref{Stokes-1} and \eqref{mu}, $p_*\in (2,\infty )$ be as in Lemma~\ref{lemma-a47-1-Stokes} and $p\in {\mathcal R}(p_*,n)$, cf. \eqref{p*}.
Then for any $\boldsymbol\varphi\in B_{p,p }^{1-\frac{1}{p}}(\partial \Omega )^n$, the transmission problem
\begin{equation}
\label{transmission-B-dl-var}
\left\{
\begin{array}{lll}
\partial _\alpha\left(A^{\alpha\beta}\partial_\beta({\bf u})\right)-\nabla \pi={\bf 0} & \mbox{ in } \mathbb R^n\setminus \partial\Omega \,,\\
{\rm{div}}\, {\bf u}=0 & \mbox{ in } \mathbb R^n\setminus \partial\Omega \,,
\\
\left[\gamma ({\bf u})\right]= -\boldsymbol\varphi,\quad 
\left[{\bf T}({\bf u},\pi)\right]
={{\bf 0}} & \mbox{ on } \partial \Omega \,,
\end{array}\right.
\end{equation}
has a unique solution 
$({\bf u}_{\boldsymbol\varphi},\pi_{\boldsymbol\varphi})\in {\mathcal H}^1_{p}(\mathbb R^n\setminus\partial\Omega)^n
\times L_p(\mathbb R^n)$, 
and there exists a constant $C=C(\partial \Omega ,c_{\mathbb A},p,n)>0$ such that
\begin{align*}
%\label{estimate-4-D-var-p}
\|{\bf u}_{{\boldsymbol\varphi}}\|_{{\mathcal H}^1_{p}(\mathbb R^n\setminus\partial\Omega)^n}
+\|\pi _{\boldsymbol\varphi}\|_{L_p(\mathbb R^n)}
\leq C\|{\boldsymbol\varphi}\|_{B_{p,p}^{1-\frac{1}{p}}(\partial \Omega )^n}.
\end{align*}
\end{theorem}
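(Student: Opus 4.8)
The plan is to reduce the transmission problem \eqref{transmission-B-dl-var} to an $L_\infty$-coefficient Stokes system posed in the whole space ${\mathbb R}^n$, i.e.\ with vanishing trace jump, which is then covered by Lemma~\ref{lemma-a47-1-Stokes} and Theorem~\ref{Brinkman-problem-p}. First I would lift the Dirichlet jump datum: by Lemma~\ref{trace-operator1} the trace operator $\gamma_+:H^1_p(\Omega_+)^n\to B_{p,p}^{1-\frac{1}{p}}(\partial\Omega)^n$ has a bounded linear right inverse $\gamma_+^{-1}$, so I set ${\bf w}_{\boldsymbol\varphi}:=\gamma_+^{-1}(-\boldsymbol\varphi)$ in $\Omega_+$ and ${\bf w}_{\boldsymbol\varphi}:={\bf 0}$ in $\Omega_-$. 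Then ${\bf w}_{\boldsymbol\varphi}\in{\mathcal H}^1_p({\mathbb R}^n\setminus\partial\Omega)^n$ has compact support in $\overline{\Omega}_+$, satisfies $[\gamma({\bf w}_{\boldsymbol\varphi})]=-\boldsymbol\varphi$, and $\|{\bf w}_{\boldsymbol\varphi}\|_{{\mathcal H}^1_p({\mathbb R}^n\setminus\partial\Omega)^n}\le C\|\boldsymbol\varphi\|_{B_{p,p}^{1-\frac{1}{p}}(\partial\Omega)^n}$.

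Next I would prove the equivalence: a pair $({\bf u},\pi)\in{\mathcal H}^1_p({\mathbb R}^n\setminus\partial\Omega)^n\times L_p({\mathbb R}^n)$ solves \eqref{transmission-B-dl-var} if and only if, with ${\bf u}_0:={\bf u}-{\bf w}_{\boldsymbol\varphi}$, the pair $({\bf u}_0,\pi)$ belongs to ${\mathcal H}^1_p({\mathbb R}^n)^n\times L_p({\mathbb R}^n)$ and solves the mixed variational problem \eqref{transmission-S-variational-dl-3-equiv-0} with data $\boldsymbol\xi\in{\mathcal H}^{-1}_p({\mathbb R}^n)^n$ and $\zeta\in L_p({\mathbb R}^n)$ given by $\langle\boldsymbol\xi,{\bf v}\rangle_{{\mathbb R}^n}:=-\big\langle A^{\alpha\beta}\partial_\beta{\bf w}_{\boldsymbol\varphi},\partial_\alpha{\bf v}\big\rangle_{\Omega_+}$ and $\zeta:={\rm{div}}\,{\bf w}_{\boldsymbol\varphi}$ in $\Omega_+$, $\zeta:=0$ in $\Omega_-$. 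This is argued as in the proof of Theorem~\ref{slp-var-apr-1-p}: testing the first variational equation against ${\bf v}\in C_0^\infty(\Omega_\pm)^n$ yields the Stokes equation for ${\bf u}$ in $\Omega_\pm$ (and hence $({\bf u}_\pm,\pi_\pm,{\bf 0})\in\boldsymbol{\mathcal H}^1_p(\Omega_\pm,\boldsymbol{\mathcal L})$); the Green identity \eqref{jump-conormal-derivative-1f0} of Lemma~\ref{lemma-add-new} then turns the first variational equation into $\langle[{\bf T}({\bf u},\pi)],\gamma{\bf w}\rangle_{\partial\Omega}=0$ for all ${\bf w}\in{\mathcal H}^1_{p'}({\mathbb R}^n)^n$, whence $[{\bf T}({\bf u},\pi)]={\bf 0}$ by surjectivity of $\gamma:{\mathcal H}^1_{p'}({\mathbb R}^n)^n\to B_{p',p'}^{\frac{1}{p}}(\partial\Omega)^n$; the second variational equation, together with the surjectivity of ${\rm{div}}:{\mathcal H}^1_p({\mathbb R}^n)^n\to L_p({\mathbb R}^n)$ and $L_p$--$L_{p'}$ duality, is equivalent to ${\rm{div}}\,{\bf u}=0$ in $\Omega_\pm$; and $[\gamma({\bf u})]=-\boldsymbol\varphi$ amounts to $[\gamma({\bf u}_0)]={\bf 0}$, i.e.\ to ${\bf u}_0\in{\mathcal H}^1_p({\mathbb R}^n)^n$ (jump characterisation recalled after \eqref{standard-weight-p}, cf.\ \cite[Theorem~5.13]{B-M-M-M}).

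Granting the equivalence, Lemma~\ref{lemma-a47-1-Stokes} applies since $p\in{\mathcal R}(p_*,n)$: the reduced problem has a unique solution $({\bf u}_0,\pi)\in{\mathcal H}^1_p({\mathbb R}^n)^n\times L_p({\mathbb R}^n)$, and \eqref{estimate-1-wp-S} together with the bound on ${\bf w}_{\boldsymbol\varphi}$ gives $\|{\bf u}_0\|_{{\mathcal H}^1_p({\mathbb R}^n)^n}+\|\pi\|_{L_p({\mathbb R}^n)}\le C\big(\|\boldsymbol\xi\|_{{\mathcal H}^{-1}_p({\mathbb R}^n)^n}+\|\zeta\|_{L_p({\mathbb R}^n)}\big)\le C\|\boldsymbol\varphi\|_{B_{p,p}^{1-\frac{1}{p}}(\partial\Omega)^n}$. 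Then ${\bf u}:={\bf u}_0+{\bf w}_{\boldsymbol\varphi}$ solves \eqref{transmission-B-dl-var}, and the asserted estimate follows from $\|{\bf u}\|_{{\mathcal H}^1_p({\mathbb R}^n\setminus\partial\Omega)^n}\le\|{\bf u}_0\|_{{\mathcal H}^1_p({\mathbb R}^n)^n}+\|{\bf w}_{\boldsymbol\varphi}\|_{{\mathcal H}^1_p({\mathbb R}^n\setminus\partial\Omega)^n}$. For uniqueness, a solution of \eqref{transmission-B-dl-var} with $\boldsymbol\varphi={\bf 0}$ has $[\gamma({\bf u})]={\bf 0}$, hence ${\bf u}\in{\mathcal H}^1_p({\mathbb R}^n)^n$, so that $({\bf u},\pi)$ solves \eqref{transmission-S-variational-dl-3-equiv-0} with $\boldsymbol\xi={\bf 0}$, $\zeta=0$; the uniqueness part of Lemma~\ref{lemma-a47-1-Stokes} (equivalently, Theorem~\ref{Brinkman-problem-p}) forces ${\bf u}={\bf 0}$, $\pi=0$.

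I expect the main obstacle to be the bookkeeping of the divergence. Since $\langle\boldsymbol\varphi,\boldsymbol\nu\rangle_{\partial\Omega}$ need not vanish, the distributional divergences of ${\bf w}_{\boldsymbol\varphi}$ and of ${\bf u}$ over ${\mathbb R}^n$ carry a surface term supported on $\partial\Omega$, so the reduction must be phrased through the regular $L_p$-part $\zeta$ only, and one must verify that this incurs no zero-mean compatibility constraint on $\zeta$; this is exactly what the surjectivity of ${\rm{div}}:{\hat{H}}^1_{p;0}({\mathbb R}^n)^n\to L_p({\mathbb R}^n)$ for $1<p<n$, underlying Lemma~\ref{lemma-a47-1-Stokes}, ensures. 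A second delicate point is to match, for the jumping field ${\bf u}$, the ``broken'' expression $\big\langle A^{\alpha\beta}\partial_\beta{\bf u}_+,\partial_\alpha{\bf w}\big\rangle_{\Omega_+}+\big\langle A^{\alpha\beta}\partial_\beta{\bf u}_-,\partial_\alpha{\bf w}\big\rangle_{\Omega_-}$ with the boundary pairing $\langle[{\bf T}({\bf u},\pi)],\gamma{\bf w}\rangle_{\partial\Omega}$, which is precisely the content of Lemma~\ref{lemma-add-new}. Apart from these, the argument runs parallel to the proof of Theorem~\ref{slp-var-apr-1-p}.
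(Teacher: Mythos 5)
Your proposal is correct and follows essentially the same route as the paper's proof: lift the trace jump by a right inverse of the trace operator, observe that the shifted field has no jump and hence lies in ${\mathcal H}^1_{p}({\mathbb R}^n)^n$, reduce to the whole-space mixed variational problem of Lemma~\ref{lemma-a47-1-Stokes}, and obtain uniqueness from the jump condition forcing membership in ${\mathcal H}^1_{p}({\mathbb R}^n)^n$. The only (immaterial) differences are that you take the lifting supported in $\overline{\Omega}_+$ rather than using right inverses on both sides, and that you invoke Lemma~\ref{lemma-a47-1-Stokes} directly for uniqueness where the paper routes it through Theorem~\ref{slp-var-apr-1-p}.
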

\begin{proof}
Let 
$p\!\in \!{\mathcal R}(p_*,n)$ and
$\boldsymbol\varphi\in B_{p,p }^{1-\frac{1}{p}}(\partial \Omega )^n$.
First we show uniqueness.
Let 
$({\bf u}_{0},\pi_{0})\in{\mathcal H}^1_{p}(\mathbb R^n\setminus\partial\Omega)^n\times L_p(\mathbb R^n)$
be a solution of the homogeneous version of problem \eqref{transmission-B-dl-var}. 
Then the first transmission condition implies that ${\bf u}_0\in {\mathcal H}^1_{p}({\mathbb R}^n)^n$.  
Hence, $({\bf u}_0,\pi _0)\in {\mathcal H}^1_{p}({\mathbb R}^n)^n\times L_p({\mathbb R}^n)$ is a solution of the homogeneous version of the transmission problem \eqref{var-Brinkman-transmission-sl}, which, in view of Theorem \ref{slp-var-apr-1-p}, has only the trivial solution. 

The arguments similar to the ones for Theorem \ref{slp-var-apr-1-p} imply that problem \eqref{transmission-B-dl-var} has the following equivalent variational formulation:

{\it Find $({\bf u}_{\boldsymbol\varphi},\pi_{\boldsymbol\varphi})\in {\mathcal H}^1_{p}(\mathbb R^n\setminus\partial\Omega)^n\times L_p(\mathbb R^n)$
such that}
\begin{equation}
\label{transmission-B-dl-var-equiv}
\left\{\begin{array}{lll}
\left \langle A^{\alpha \beta }\partial_\beta {\bf u}_{\boldsymbol\varphi},\partial_\alpha {\bf v}\right \rangle_{\Omega_+\cup\,\Omega_-}
-\langle \pi_{\boldsymbol\varphi},{\rm{div}}\, {\bf v}\rangle _{{\mathbb R}^n} 
=0,\ \forall \, {\bf v}\in {\mathcal H}^1_{p'}({\mathbb R}^n)^n,\\
\langle {\rm{div}}\, {\bf u}_{{\boldsymbol\varphi}},q\rangle_{\Omega_+\cup\,\Omega_-}=0,\ \forall \, q\in L_{p'}({\mathbb R}^n),\\
\left[\gamma ({\bf u}_{{\boldsymbol\varphi}})\right]= -\boldsymbol\varphi \mbox{ on } \partial \Omega.
\end{array}
\right.
\end{equation}
The existence of the bounded right inverses 
$\gamma^{-1}_\pm: B_{p,p}^{1-\frac{1}{p}}({\partial\Omega })\to\mathcal H^1_{p}(\Omega_\pm)$ to the trace operators 
$\gamma\pm \!:\!{\mathcal H}^1_{p}(\Omega_\pm)\!\to \! B_{p,p}^{1-\frac{1}{p}}(\partial\Omega)$ implies that for $\boldsymbol\varphi\in B_{p,p}^{1-\frac{1}{p}}(\partial \Omega )^n$ given,
there is ${\bf w}_{\boldsymbol\varphi}\in {\mathcal H}^1_{p}(\Omega_\pm)^n$, such that
%\begin{align}
%\label{w-phi-var}
$\left[\gamma {\bf w}_{\boldsymbol\varphi}\right]=-{\boldsymbol\varphi} \mbox{ on } \partial \Omega .$
%\end{align}
Thus, ${\bf v}_{{\boldsymbol\varphi}}:={\bf u}_{{\boldsymbol\varphi}}-{\bf w}_{{\boldsymbol\varphi}}$ has no jump across $\partial \Omega $,
and hence ${\bf v}_{{\boldsymbol\varphi}}\in {\mathcal H}^1_{p}({\mathbb R}^n)^n$ (see also \cite[Theorem 5.13]{B-M-M-M}). Moreover, problem \eqref{transmission-B-dl-var-equiv} reduces to the variational problem
\begin{align}
\label{transmission-B-variational-dl-3-equiv-var}
\left\{\begin{array}{ll}
a_{{\mathbb R}^n}({\bf v}_{\boldsymbol\varphi},{\bf v})+b_{\mathbb R^n}({\bf v},\pi_{\boldsymbol\varphi})
=\boldsymbol\xi_{\boldsymbol\varphi}({\bf v}),\ \forall \, {\bf v}\in {\mathcal H}^1_{p'}({\mathbb R}^n)^n\,,\\
b_{\mathbb R^n}({\bf v}_{\boldsymbol\varphi},q)
=\zeta_{\boldsymbol\varphi}(q),\ \forall \, q\in L_{p'}(\mathbb R^n)\,,
\end{array}
\right.
\end{align}
with the unknown $({\bf v}_{{\boldsymbol\varphi}},\pi _{{\boldsymbol\varphi}})\!\in \!{\mathcal H}^1_{p}({\mathbb R}^n)^n\!\times \!L_p({\mathbb R}^n)$, where $a_{{\mathbb R}^n}:{\mathcal H}^1_{p}({\mathbb R}^n)^n\times {\mathcal H}^1_{p'}({\mathbb R}^n)^n\to {\mathbb R}$ and $b_{\mathbb R^n}: {\mathcal H}^1_{p}({\mathbb R}^n)^n\times L_{p'}({\mathbb R}^n)\to {\mathbb R}$ are the bounded bilinear forms given by \eqref{a-v} and \eqref{b-v}, respectively. 
In addition, conditions \eqref{Stokes-1} show the boundedness of the linear forms
\begin{align}
\boldsymbol\xi _{{\boldsymbol\varphi}}:{\mathcal H}^1_{p'}({\mathbb R}^n)^n\to {\mathbb R},\ \boldsymbol\xi _{{\boldsymbol\varphi}}({\bf v})
:=-\big\langle A^{\alpha \beta }\partial _\beta {\bf w}_{\boldsymbol\varphi },\partial _\alpha {\bf v}\big\rangle _{\Omega _+}-\big\langle A^{\alpha \beta }\partial _\beta {\bf w}_{\boldsymbol\varphi },\partial _\alpha {\bf v}\big \rangle _{\Omega _-},
\\
\zeta _{\boldsymbol\varphi}:L_{p'}({\mathbb R}^n)^n\to {\mathbb R},\ \zeta _{\boldsymbol\varphi}(q)
:=-\langle {\rm{div}}\, {\bf w}_{{{\boldsymbol\varphi}}},q\rangle _{\Omega _+}
-\langle {\rm{div}}\, {\bf w}_{{{\boldsymbol\varphi}}},q\rangle _{\Omega _{-}},\ 
\forall \, q\in L_{p'}({\mathbb R}^n)\,.
\end{align}
Therefore, Lemma \ref{lemma-a47-1-Stokes} shows that the variational problem \eqref{transmission-B-variational-dl-3-equiv-var} 
has a unique solution $({\bf v}_{{\boldsymbol\varphi}},\pi _{{\boldsymbol\varphi}})\in {\mathcal H}^1_{p}({\mathbb R}^n)^n\times L_{p}({\mathbb R}^n)$. 
Then the pair
$({\bf u}_{{\boldsymbol\varphi}},\pi _{{\boldsymbol\varphi}})=({\bf w}_{{\boldsymbol\varphi}}+{\bf v}_{{\boldsymbol\varphi}},\pi _{{\boldsymbol\varphi}})$
is a solution of the variational problem \eqref{transmission-B-dl-var-equiv} in 
${\mathcal H}^1_{p}(\mathbb R^n\setminus\partial\Omega)^n\times L_p({\mathbb R}^n)$, 
and due to the equivalence between problems \eqref{transmission-B-dl-var} and  \eqref{transmission-B-dl-var-equiv}, it is also the unique solution of the problem \eqref{transmission-B-dl-var} in 
${\mathcal H}^1_{p}(\mathbb R^n\setminus\partial\Omega)^n\times L_p({\mathbb R}^n)$. 
\end{proof}

Theorem \ref{dlp-var-apr-var-p} leads to the following definition of the double layer operator for the nonsmooth-coefficient Brinkman system \eqref{Stokes} (cf. \cite[p. 77]{Sa-Se} for the constant-coefficient Stokes system in ${\mathbb R}^3$, and \cite[formula (4.5) and Lemma 4.6]{Barton} for general strongly elliptic differential operators).
\begin{definition}
\label{d-l-variational-var}
Let $\mathbb A$ satisfy conditions \eqref{Stokes-1} and \eqref{mu}, $p_*\in (2,\infty )$ be as in Lemma~\ref{lemma-a47-1-Stokes} and  $p\in {\mathcal R}(p_*,n)$, cf. \eqref{p*}.
For any $\boldsymbol\varphi \in B_{p,p}^{1-\frac{1}{p}}(\partial \Omega )^n$,
we define the {\it double layer potentials} with the density $\boldsymbol\varphi $ for the Stokes operator $\boldsymbol{\mathcal L}$ with coefficients 
${\mathbb A}$ as
\begin{align*}
%\label{dlp-vp-var}
{\bf W}_{\partial\Omega}{\boldsymbol\varphi }:={\bf u}_{\boldsymbol\varphi },\, 
{\mathcal Q}^d_{\partial\Omega}{\boldsymbol\varphi}:=\pi _{\boldsymbol\varphi },
\end{align*}
and the boundary operators 
${\bf K}_{\partial\Omega}:B_{p,p}^{1-\frac{1}{p}}(\partial\Omega )^n\to B_{p,p}^{1-\frac{1}{p}}(\partial\Omega )^n$
and
${\bf D}_{\partial\Omega}:B_{p,p}^{1-\frac{1}{p}}(\partial\Omega )^n\to B_{p,p}^{-\frac{1}{p}}(\partial\Omega )^n$ as
\begin{align}
\label{dlp-oper-var}
{{\bf K}_{\partial\Omega}{\boldsymbol\varphi }:=\frac{1}{2}\left(\gamma _{+}{\bf u}_{\boldsymbol\varphi }+\gamma _{-}{\bf u}_{\boldsymbol\varphi }\right)},\quad
{\bf D}_{\partial\Omega}\boldsymbol\varphi:={\bf T}^{+}\left({\bf W}_{\partial\Omega}\boldsymbol\varphi ,{\mathcal Q}^d_{\partial\Omega}\boldsymbol\varphi \right)={\bf T}^{-}\left({\bf W}_{\partial\Omega}\boldsymbol\varphi ,{\mathcal Q}^d_{\partial\Omega}\boldsymbol\varphi \right), 
\end{align}
where $({\bf u}_{\boldsymbol\varphi },\pi _{\boldsymbol\varphi })$ is the unique solution of the transmission problem \eqref{transmission-B-dl-var} in ${\mathcal H}^1_{p}(\mathbb R^n\setminus\partial\Omega)^n\times L_p({\mathbb R}^n)$.
\end{definition}
Theorem \ref{dlp-var-apr-var-p} and Definition \ref{d-l-variational-var} lead to the next result (see also \cite[(10.81), (10.82)]{M-W} and \cite[Propositions 6.2, 6.3]{Sa-Se} for the constant coefficient Stokes system in ${\mathbb R}^3$, and {\cite[Lemma 5.8]{Barton}}).
\begin{lemma}
\label{continuity-dl-h-var}
Let $\mathbb A$ satisfy conditions \eqref{Stokes-1} and \eqref{mu}, $p_*\in (2,\infty )$ be as in Lemma~\ref{lemma-a47-1-Stokes} and  $p\in {\mathcal R}(p_*,n)$, cf. \eqref{p*}.
\begin{itemize}
\item[$(i)$]
The following  operators are linear and continuous,
\begin{align}
\label{dl-var-mu-alpha}
&{\bf W}_{\partial\Omega}:B_{p,p}^{1-\frac{1}{p}}(\partial \Omega )^n\to {\mathcal H}^1_{p}(\mathbb R^n\setminus\partial\Omega)^n,\quad
{\mathcal Q}_{\partial\Omega}^d:B_{p,p}^{1-\frac{1}{p}}(\partial \Omega )^n\to {L_p({\mathbb R}^n)},\\
\label{dl-var-mu-alpha-1}
&{\bf K}_{\partial\Omega}:B_{p,p}^{1-\frac{1}{p}}(\partial \Omega )^n\to B_{p,p}^{1-\frac{1}{p}}(\partial \Omega )^n,
\quad
{\bf D}_{\partial\Omega}:B_{p,p}^{1-\frac{1}{p}}(\partial \Omega )^n\to B_{p,p}^{-\frac{1}{p}}(\partial \Omega )^n\,.
\end{align}
\item[$(ii)$]
For any $\boldsymbol\varphi \in B_{p,p}^{1-\frac{1}{p}}(\partial \Omega )^n$, the following jump relations hold a.e. on $\partial \Omega $
\begin{align}
\label{jump-dl-v-var-alpha}
& 
\gamma _{\pm}{\bf W}_{\partial\Omega}\boldsymbol\varphi 
=\mp \frac{1}{2}\boldsymbol\varphi +{\bf K}_{\partial\Omega}\boldsymbol\varphi ,\quad
{\bf T}^\pm\left({\bf W}_{\partial\Omega}\boldsymbol\varphi ,{\mathcal Q}^d_{\partial\Omega}\boldsymbol\varphi \right)
={\bf D}_{\partial\Omega}\boldsymbol\varphi \,.
\end{align}
\item[$(iii)$]
The operator 
${\mathcal K}^*_{\partial\Omega}:B_{p',p'}^{-\frac{1}{p'}}(\partial \Omega )^n\to B_{p',p'}^{-\frac{1}{p'}}(\partial \Omega )^n$ 
defined in
\eqref{slp-S-oper-var-adjoint} is transpose to the operator 
${\bf K}_{\partial\Omega}:B_{p,p}^{1-\frac{1}{p}}(\partial\Omega )^n\to B_{p,p}^{1-\frac{1}{p}}(\partial\Omega )^n$
defined in \eqref{dlp-oper-var}, i.e.,
\begin{align}
\label{transpose-dl-var-sm}
\left\langle \boldsymbol\psi^* ,{\bf K}_{\partial\Omega}\boldsymbol\varphi \right\rangle _{\partial \Omega}=\left\langle {\mathcal K}^*_{\partial\Omega}\boldsymbol\psi^* ,\boldsymbol\varphi \right\rangle _{\partial \Omega},\, \forall \, \boldsymbol\varphi\in B_{p,p}^{1-\frac{1}{p}}(\partial \Omega )^n,\, \boldsymbol\psi^*\in B_{p',p'}^{-\frac{1}{p'}}(\partial \Omega )^n\,.
\end{align}
\end{itemize}
\end{lemma}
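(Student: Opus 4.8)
The plan is to obtain parts (i) and (ii) directly from Theorem~\ref{dlp-var-apr-var-p} and Definition~\ref{d-l-variational-var}, and to deduce the transpose identity (iii) from a reciprocity computation that pairs the double layer for $\boldsymbol{\mathcal L}$ against the single layer for the adjoint operator $\boldsymbol{\mathcal L}^*$.

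For (i): the continuity of $\mathbf W_{\partial\Omega}$ and $\mathcal Q^d_{\partial\Omega}$ in \eqref{dl-var-mu-alpha} is precisely the a priori estimate of Theorem~\ref{dlp-var-apr-var-p}. The continuity of $\mathbf K_{\partial\Omega}=\frac{1}{2}(\gamma_++\gamma_-)\mathbf W_{\partial\Omega}$ then follows by composing with the bounded trace operators $\gamma_\pm:\mathcal H^1_p(\Omega_\pm)^n\to B_{p,p}^{1-\frac{1}{p}}(\partial\Omega)^n$ (Lemma~\ref{trace-operator1} and its extension to $\mathcal H^1_p(\Omega_-)$), and the continuity of $\mathbf D_{\partial\Omega}$ follows because $(\mathbf W_{\partial\Omega}\boldsymbol\varphi|_{\Omega_\pm},\mathcal Q^d_{\partial\Omega}\boldsymbol\varphi|_{\Omega_\pm},\mathbf 0)\in\boldsymbol{\mathcal H}^1_p(\Omega_\pm,\boldsymbol{\mathcal L})$ and the generalized conormal derivative operator is continuous on that space by Lemma~\ref{lem-add1}(ii). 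For (ii): since $(\mathbf u_{\boldsymbol\varphi},\pi_{\boldsymbol\varphi})$ solves \eqref{transmission-B-dl-var} we have $\gamma_+\mathbf u_{\boldsymbol\varphi}-\gamma_-\mathbf u_{\boldsymbol\varphi}=-\boldsymbol\varphi$ and $\mathbf T^+(\mathbf u_{\boldsymbol\varphi},\pi_{\boldsymbol\varphi})=\mathbf T^-(\mathbf u_{\boldsymbol\varphi},\pi_{\boldsymbol\varphi})$; combining these with the definitions \eqref{dlp-oper-var} of $\mathbf K_{\partial\Omega}$ and $\mathbf D_{\partial\Omega}$ yields at once $\gamma_\pm\mathbf W_{\partial\Omega}\boldsymbol\varphi=\mp\frac{1}{2}\boldsymbol\varphi+\mathbf K_{\partial\Omega}\boldsymbol\varphi$ and $\mathbf T^\pm(\mathbf W_{\partial\Omega}\boldsymbol\varphi,\mathcal Q^d_{\partial\Omega}\boldsymbol\varphi)=\mathbf D_{\partial\Omega}\boldsymbol\varphi$.

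For (iii), set $(\mathbf u,\pi):=(\mathbf W_{\partial\Omega}\boldsymbol\varphi,\mathcal Q^d_{\partial\Omega}\boldsymbol\varphi)$ and $(\mathbf v,q):=(\mathbf V^*_{\partial\Omega}\boldsymbol\psi^*,\mathcal Q^{s*}_{\partial\Omega}\boldsymbol\psi^*)$, the latter being well defined by Theorem~\ref{slp-var-apr-1-p-adj} since $\mathcal R(p_*,n)$ is invariant under $p\mapsto p'$. Then $(\mathbf u|_{\Omega_\pm},\pi|_{\Omega_\pm},\mathbf 0)\in\boldsymbol{\mathcal H}^1_p(\Omega_\pm,\boldsymbol{\mathcal L})$, $(\mathbf v|_{\Omega_\pm},q|_{\Omega_\pm},\mathbf 0)\in\boldsymbol{\mathcal H}^1_{p'}(\Omega_\pm,\boldsymbol{\mathcal L}^*)$, and ${\rm{div}}\,\mathbf u={\rm{div}}\,\mathbf v=0$ in $\Omega_\pm$. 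First I would apply the one-sided Green identity \eqref{Green-particular-p} on $\Omega_+$ and on $\Omega_-$ with the test field $\mathbf v|_{\Omega_\pm}$ and add the two resulting identities; using ${\rm{div}}\,\mathbf v=0$, the relation $\mathbf T^+(\mathbf u,\pi)=\mathbf T^-(\mathbf u,\pi)$ from (ii), and $[\gamma\mathbf v]=\mathbf 0$ (Lemma~\ref{self-adj}), the boundary terms cancel and one gets $\langle A^{\alpha\beta}\partial_\beta\mathbf u,\partial_\alpha\mathbf v\rangle_{\Omega_+\cup\Omega_-}=0$. Next I would apply the one-sided adjoint Green identity \eqref{Green-particular-p-adj} on $\Omega_+$ and $\Omega_-$ with the test field $\mathbf u|_{\Omega_\pm}$, use ${\rm{div}}\,\mathbf u=0$ together with the coefficient symmetry $\langle A^{*\alpha\beta}\partial_\beta\mathbf v,\partial_\alpha\mathbf u\rangle_{\Omega_\pm}=\langle A^{\alpha\beta}\partial_\beta\mathbf u,\partial_\alpha\mathbf v\rangle_{\Omega_\pm}$ (the index relabelling performed in \eqref{jump-conormal-derivative-1-sl-adj}), and add the two; since the right-hand side is then $\langle A^{\alpha\beta}\partial_\beta\mathbf u,\partial_\alpha\mathbf v\rangle_{\Omega_+\cup\Omega_-}=0$, this gives $\langle\mathbf T^{*+}(\mathbf v,q),\gamma_+\mathbf u\rangle_{\partial\Omega}=\langle\mathbf T^{*-}(\mathbf v,q),\gamma_-\mathbf u\rangle_{\partial\Omega}$. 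Finally I would substitute into this last identity the jump relations $\gamma_\pm\mathbf u=\mathbf K_{\partial\Omega}\boldsymbol\varphi\mp\frac{1}{2}\boldsymbol\varphi$ from (ii) and $\mathbf T^{*\pm}(\mathbf v,q)=\mathcal K^*_{\partial\Omega}\boldsymbol\psi^*\pm\frac{1}{2}\boldsymbol\psi^*$ from \eqref{sl-identities-var-1-adjoint}, and expand; the terms quadratic in $\mathbf K_{\partial\Omega},\mathcal K^*_{\partial\Omega}$ and the term $\langle\boldsymbol\psi^*,\boldsymbol\varphi\rangle_{\partial\Omega}$ cancel, leaving precisely $\langle\boldsymbol\psi^*,\mathbf K_{\partial\Omega}\boldsymbol\varphi\rangle_{\partial\Omega}=\langle\mathcal K^*_{\partial\Omega}\boldsymbol\psi^*,\boldsymbol\varphi\rangle_{\partial\Omega}$, which is \eqref{transpose-dl-var-sm}.

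The step I expect to be the main obstacle is bookkeeping rather than a conceptual difficulty: one must use the \emph{one-sided} Green identities of Lemma~\ref{lem-add1}(ii) and Lemma~\ref{lem-add1*}(ii) on $\Omega_+$ and $\Omega_-$ separately — because $\mathbf u=\mathbf W_{\partial\Omega}\boldsymbol\varphi$ carries a nonzero jump across $\partial\Omega$ and hence is \emph{not} an admissible global test field in $\mathcal H^1_p(\mathbb R^n)^n$ — keep careful track of the $\pm$ prefactors attached to the exterior conormal derivatives, and verify that in the two Green identities one pairs over the matching dualities, namely $B_{p,p}^{-\frac{1}{p}}(\partial\Omega)^n\times B_{p',p'}^{\frac{1}{p}}(\partial\Omega)^n$ for $\mathbf T^\pm(\mathbf u,\pi)$ against $\gamma_\pm\mathbf v$, and $B_{p',p'}^{-\frac{1}{p'}}(\partial\Omega)^n\times B_{p,p}^{1-\frac{1}{p}}(\partial\Omega)^n$ for $\mathbf T^{*\pm}(\mathbf v,q)$ against $\gamma_\pm\mathbf u$, so that the final cancellations producing \eqref{transpose-dl-var-sm} are legitimate.
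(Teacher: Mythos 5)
Your proposal is correct and follows essentially the same route as the paper: parts (i)--(ii) read off from Theorem~\ref{dlp-var-apr-var-p} and Definition~\ref{d-l-variational-var}, and part (iii) obtained by pairing ${\bf W}_{\partial\Omega}\boldsymbol\varphi$ with ${\bf V}^*_{\partial\Omega}\boldsymbol\psi^*$ through the Green identities to get the vanishing energy term and the identity $\langle {\bf T}^{*+}({\bf v},q),\gamma_+{\bf u}\rangle_{\partial\Omega}=\langle {\bf T}^{*-}({\bf v},q),\gamma_-{\bf u}\rangle_{\partial\Omega}$, then substituting the jump relations and cancelling. The only cosmetic difference is that you sum the one-sided Green identities directly where the paper invokes the pre-packaged jump formulas of Lemmas~\ref{lemma-add-new} and~\ref{jump-conormal-derivative-1}, which amounts to the same computation.
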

\begin{proof}
The continuity of operators \eqref{dl-var-mu-alpha} and \eqref{dl-var-mu-alpha-1} follows from well-posedness of the transmission problem \eqref{transmission-B-dl-var} and Definition \ref{d-l-variational-var}. 
Moreover, the transmission conditions in \eqref{transmission-B-dl-var} and again Definition \ref{d-l-variational-var} lead to the jump formulas \eqref{jump-dl-v-var-alpha}. 

Next we show equality \eqref{transpose-dl-var-sm} using an argument similar to that in the proof of \cite[Proposition 6.7]{Sa-Se} for the constant-coefficient Stokes system and $p=2$.
Let $\boldsymbol\varphi\in B_{p,p}^{1-\frac{1}{p}}(\partial \Omega )^n$ be given, and let $({\bf u}_{\boldsymbol\varphi },\pi _{\boldsymbol\varphi })=\big({\bf W}_{\partial\Omega}\boldsymbol\varphi ,{\mathcal Q}_{\partial\Omega}^d\boldsymbol\varphi \big)\in B_{p,p}^{1-\frac{1}{p}}(\partial \Omega )^n\times L_p({\mathbb R}^n)$ be the unique solution of problem \eqref{transmission-B-dl-var}.
Let also $\boldsymbol\psi^* \in B_{p',p'}^{-\frac{1}{p'}}(\partial \Omega )^n$. 
According to formulas \eqref{jump-conormal-derivative-1-adj} and \eqref{jump-dl-v-var-alpha}, 
\begin{align}
\label{dl-identity-var}
0=\langle [{\bf T}({\bf W}_{\partial\Omega}\boldsymbol\varphi ,{\mathcal Q}_{\partial\Omega}^d\boldsymbol\varphi )],\gamma {\bf V}^*_{\partial\Omega}\boldsymbol\psi^* \rangle _{\partial \Omega }
=\big \langle A^{\alpha \beta }\partial _\beta \big({\bf W}_{\partial\Omega}\boldsymbol\varphi \big),\partial _\alpha \big({\bf V}^*_{\partial\Omega}\boldsymbol\psi^* \big)\big \rangle _{\Omega _+\cup \Omega _{-}}\,.
\end{align}
Then the Green identities \eqref{Green-particular-p-adj} and equality \eqref{dl-identity-var} yield that
\begin{align}
\label{sl-dl-var}
\!\!\!\!\!\!\!\!\left\langle {\bf T}^{*+}\left({\bf V}^*_{\partial\Omega}\boldsymbol\psi^* ,{\mathcal Q}^{s*}_{\partial\Omega}\boldsymbol\psi^* \right),\gamma _{+}({\bf W}_{\partial\Omega}\boldsymbol\varphi )\right\rangle _{\partial \Omega }\!\!=\!\!\left\langle {\bf T}^{*-}\left({\bf V}^*_{\partial\Omega}\boldsymbol\psi^* ,{\mathcal Q}^*_{\partial\Omega}\boldsymbol\psi^* \right),\gamma _{-}\left({\bf W}_{\partial\Omega}\boldsymbol\varphi \right)\right\rangle _{\partial \Omega }.
\end{align}
The second formula in  \eqref{sl-identities-var-1-adjoint},
the first formula in \eqref{jump-dl-v-var-alpha}, and relation \eqref{sl-dl-var} lead to equality \eqref{transpose-dl-var-sm}.
\end{proof}
We now show the following invertibility property of the operator ${\bf D}_{\partial\Omega}$ defined in \eqref{dl-var-mu-alpha-1} (see \cite[Propositions 6.4 and 6.5]{Sa-Se} in the constant-coefficient case).

For $s\in[-1,1]$, let us define the subspaces
%\begin{align}
%\label{R-perp}
$
H_{**}^{s}(\partial\Omega )^n:=\big\{\boldsymbol\Psi \in H^{s}(\partial\Omega )^n: \langle \boldsymbol\Psi ,{\bf 1}\rangle _{\partial \Omega }=0
$
%\end{align}

\begin{lemma}
\label{isom-dlc}
Let $\mathbb A$ satisfy conditions \eqref{Stokes-1} and \eqref{mu}.
Then 
\begin{align}
\label{kernel-range-dl}
&{\rm Ker}\, \{{\bf D}_{\partial\Omega}:H^{\frac{1}{2}}(\partial\Omega )^n\to H^{-\frac{1}{2}}(\partial\Omega )^n\}={\mathbb R}^n, \\ 
&{\bf D}_{\partial\Omega}\,\boldsymbol\varphi \in H_{**}^{-\frac{1}{2}}(\partial\Omega )^n\quad \forall\, \boldsymbol\varphi \in H^{\frac{1}{2}}(\partial\Omega )^n \,,
\end{align}
and the following operator 
is an isomorphism,
\begin{align}
\label{hyp-v-isom}
{\bf D}_{\partial\Omega}:H_{**}^{\frac{1}{2}}(\partial\Omega )^n\to H_{**}^{-\frac{1}{2}}(\partial\Omega )^n .
\end{align}
\end{lemma}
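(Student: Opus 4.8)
The plan is to establish the three assertions separately, relying throughout on the identity $\mathbf D_{\partial\Omega}\boldsymbol\varphi=\mathbf T^{+}\big(\mathbf W_{\partial\Omega}\boldsymbol\varphi,\mathcal Q^{d}_{\partial\Omega}\boldsymbol\varphi\big)=\mathbf T^{-}\big(\mathbf W_{\partial\Omega}\boldsymbol\varphi,\mathcal Q^{d}_{\partial\Omega}\boldsymbol\varphi\big)$ from \eqref{dlp-oper-var}, on the Green identity \eqref{Green-particular-p} taken with $p=2$, and on the coercivity built into \eqref{mu}. \emph{Kernel.} To show $\mathrm{Ker}\,\mathbf D_{\partial\Omega}\subseteq\mathbb R^{n}$, let $\boldsymbol\varphi\in H^{\frac12}(\partial\Omega)^{n}$ with $\mathbf D_{\partial\Omega}\boldsymbol\varphi=\mathbf 0$ and put $(\mathbf u,\pi):=\big(\mathbf W_{\partial\Omega}\boldsymbol\varphi,\mathcal Q^{d}_{\partial\Omega}\boldsymbol\varphi\big)$, the unique solution of \eqref{transmission-B-dl-var} in $\mathcal H^{1}(\mathbb R^{n}\setminus\partial\Omega)^{n}\times L_{2}(\mathbb R^{n})$, so that $\mathbf T^{\pm}(\mathbf u_{\pm},\pi_{\pm})=\mathbf 0$ by \eqref{jump-dl-v-var-alpha}. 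Using \eqref{Green-particular-p} in each $\Omega_{\pm}$ with $\mathbf w_{\pm}=\mathbf u_{\pm}$ and $\mathrm{div}\,\mathbf u_{\pm}=0$ gives $a_{\Omega_{\pm}}(\mathbf u_{\pm},\mathbf u_{\pm})=0$, so \eqref{mu} yields $\nabla\mathbf u_{\pm}=0$; hence $\mathbf u_{\pm}$ is constant on the connected set $\Omega_{\pm}$, and Remark~\ref{Leray} forces $\mathbf u_{-}=\mathbf 0$. The Stokes equations then give $\nabla\pi_{\pm}=\mathbf 0$, so $\pi_{-}=0$ (as $\pi_{-}\in L_{2}(\Omega_{-})$) and $\pi_{+}=c_{0}$ is constant. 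Testing \eqref{jump-conormal-derivative-1f0} with an arbitrary $\mathbf w\in{\mathcal D}(\mathbb R^{n})^{n}$, using $\nabla\mathbf u_{\pm}=0$, $\pi_{-}=0$ and the divergence theorem, gives $0=\langle[\mathbf T(\mathbf u,\pi)],\gamma\mathbf w\rangle_{\partial\Omega}=-c_{0}\langle\boldsymbol\nu,\gamma\mathbf w\rangle_{\partial\Omega}$; since $\boldsymbol\nu\neq\mathbf 0$ in $H^{-\frac12}(\partial\Omega)^{n}$ and $\gamma({\mathcal D}(\mathbb R^{n})^{n})$ is dense in $H^{\frac12}(\partial\Omega)^{n}$, this forces $c_{0}=0$, whence $\pi\equiv0$ and $\boldsymbol\varphi=-[\gamma(\mathbf u)]=-\gamma_{+}\mathbf u_{+}\in\mathbb R^{n}$. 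The opposite inclusion is immediate: for $\mathbf c\in\mathbb R^{n}$ the pair $(\{-\mathbf c,\mathbf 0\},0)$ solves \eqref{transmission-B-dl-var} with datum $\boldsymbol\varphi=\mathbf c$, so by the uniqueness part of Theorem~\ref{dlp-var-apr-var-p} it equals $\big(\mathbf W_{\partial\Omega}\mathbf c,\mathcal Q^{d}_{\partial\Omega}\mathbf c\big)$ and $\mathbf D_{\partial\Omega}\mathbf c=\mathbf T^{+}(\{-\mathbf c,\mathbf 0\},0)=\mathbf 0$.

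\emph{Range.} For $\boldsymbol\varphi\in H^{\frac12}(\partial\Omega)^{n}$ and any $\mathbf c\in\mathbb R^{n}$, apply \eqref{Green-particular-p} in $\Omega_{+}$ with $\mathbf w_{+}=\mathbf c\in{\mathcal H}^{1}(\Omega_{+})^{n}$; since $\partial_{\alpha}\mathbf c=\mathbf 0$ and $\mathrm{div}\,\mathbf c=0$, the right-hand side vanishes and $\langle\mathbf D_{\partial\Omega}\boldsymbol\varphi,\mathbf c\rangle_{\partial\Omega}=\langle\mathbf T^{+}\big(\mathbf W_{\partial\Omega}\boldsymbol\varphi,\mathcal Q^{d}_{\partial\Omega}\boldsymbol\varphi\big),\gamma_{+}\mathbf c\rangle_{\partial\Omega}=0$, i.e. $\mathbf D_{\partial\Omega}\boldsymbol\varphi\in H^{-\frac12}_{**}(\partial\Omega)^{n}$.

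\emph{Isomorphism.} For $\boldsymbol\varphi\in H^{\frac12}_{**}(\partial\Omega)^{n}$, write $(\mathbf u,\pi):=\big(\mathbf W_{\partial\Omega}\boldsymbol\varphi,\mathcal Q^{d}_{\partial\Omega}\boldsymbol\varphi\big)$. Adding the identities obtained from \eqref{Green-particular-p} in $\Omega_{+}$ and $\Omega_{-}$ with $\mathbf w_{\pm}=\mathbf u_{\pm}$, and using $\mathbf T^{+}(\mathbf u_{+},\pi_{+})=\mathbf T^{-}(\mathbf u_{-},\pi_{-})=\mathbf D_{\partial\Omega}\boldsymbol\varphi$, $\mathrm{div}\,\mathbf u_{\pm}=0$ and $\gamma_{+}\mathbf u_{+}-\gamma_{-}\mathbf u_{-}=[\gamma(\mathbf u)]=-\boldsymbol\varphi$, one gets
\begin{align}
\label{dl-coercive-plan}
-\langle\mathbf D_{\partial\Omega}\boldsymbol\varphi,\boldsymbol\varphi\rangle_{\partial\Omega}
=a_{\Omega_{+}}(\mathbf u_{+},\mathbf u_{+})+a_{\Omega_{-}}(\mathbf u_{-},\mathbf u_{-})
\geq c_{\mathbb A}^{-1}\|\nabla\mathbf u\|_{L_{2}(\Omega_{+}\cup\,\Omega_{-})^{n\times n}}^{2}.
\end{align}
I would then prove the reverse bound $\|\boldsymbol\varphi\|_{H^{\frac12}(\partial\Omega)^{n}}\le C\,\|\nabla\mathbf u\|_{L_{2}(\Omega_{+}\cup\,\Omega_{-})^{n\times n}}$ as follows. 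Since $\mathbf u_{-}\in{\mathcal H}^{1}(\Omega_{-})^{n}$ and the seminorm \eqref{seminorm} is an equivalent norm there, $\|\gamma_{-}\mathbf u_{-}\|_{H^{\frac12}(\partial\Omega)^{n}}\lesssim\|\nabla\mathbf u_{-}\|_{L_{2}(\Omega_{-})}$, hence also $|\mathbf m|\lesssim\|\nabla\mathbf u_{-}\|_{L_{2}(\Omega_{-})}$ for the boundary mean $\mathbf m:=|\partial\Omega|^{-1}\int_{\partial\Omega}\gamma_{-}\mathbf u_{-}\,d\sigma$. Because $\boldsymbol\varphi$ has zero boundary mean and $-\boldsymbol\varphi=\gamma_{+}\mathbf u_{+}-\gamma_{-}\mathbf u_{-}$, the boundary mean of $\gamma_{+}\mathbf u_{+}$ also equals $\mathbf m$, so a Poincaré inequality with boundary mean on the bounded connected Lipschitz domain $\Omega_{+}$ together with the trace theorem gives $\|\gamma_{+}\mathbf u_{+}-\mathbf m\|_{H^{\frac12}(\partial\Omega)^{n}}\lesssim\|\mathbf u_{+}-\mathbf m\|_{H^{1}(\Omega_{+})^{n}}\lesssim\|\nabla\mathbf u_{+}\|_{L_{2}(\Omega_{+})}$; writing $\boldsymbol\varphi=(\gamma_{-}\mathbf u_{-}-\mathbf m)-(\gamma_{+}\mathbf u_{+}-\mathbf m)$ and combining the three estimates yields the claimed bound. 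Together with \eqref{dl-coercive-plan} and the continuity of $\mathbf D_{\partial\Omega}:H^{\frac12}(\partial\Omega)^{n}\to H^{-\frac12}(\partial\Omega)^{n}$ from Lemma~\ref{continuity-dl-h-var}, this shows that the bilinear form $(\boldsymbol\varphi,\boldsymbol\psi)\mapsto-\langle\mathbf D_{\partial\Omega}\boldsymbol\varphi,\boldsymbol\psi\rangle_{\partial\Omega}$ is bounded and coercive on $H^{\frac12}_{**}(\partial\Omega)^{n}$. Finally, since $H^{\frac12}(\partial\Omega)^{n}=\mathbb R^{n}\oplus H^{\frac12}_{**}(\partial\Omega)^{n}$, the pairing $\langle\cdot,\cdot\rangle_{\partial\Omega}$ restricts to a non-degenerate pairing between $H^{-\frac12}_{**}(\partial\Omega)^{n}$ and $H^{\frac12}_{**}(\partial\Omega)^{n}$, so the Lax--Milgram lemma applied to this form, together with the range inclusion above, shows that $-\mathbf D_{\partial\Omega}$, and hence $\mathbf D_{\partial\Omega}$, maps $H^{\frac12}_{**}(\partial\Omega)^{n}$ isomorphically onto $H^{-\frac12}_{**}(\partial\Omega)^{n}$; this is consistent with the first assertion since $\mathbb R^{n}\cap H^{\frac12}_{**}(\partial\Omega)^{n}=\{\mathbf 0\}$.

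\emph{Main obstacle.} The only nonroutine step is the reverse trace estimate $\|\boldsymbol\varphi\|_{H^{\frac12}}\lesssim\|\nabla\mathbf u\|_{L_{2}(\Omega_{+}\cup\,\Omega_{-})}$: it is precisely here that membership of $\boldsymbol\varphi$ in $H^{\frac12}_{**}(\partial\Omega)^{n}$ is used, through the equality of the interior and exterior boundary means, and it requires combining a boundary-mean Poincaré inequality on $\Omega_{+}$ with the seminorm equivalence on the weighted space ${\mathcal H}^{1}(\Omega_{-})$; the accompanying bookkeeping needed to identify the dual of $H^{\frac12}_{**}(\partial\Omega)^{n}$ with $H^{-\frac12}_{**}(\partial\Omega)^{n}$ is elementary, and everything else is a direct application of \eqref{Green-particular-p} and \eqref{mu}.
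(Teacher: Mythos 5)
Your proposal is correct, and its overall skeleton (kernel via the energy identity and decay at infinity, range via testing the Green identity with constants on $\Omega_+$, then coercivity of $-\langle{\bf D}_{\partial\Omega}\boldsymbol\varphi,\boldsymbol\varphi\rangle_{\partial\Omega}$ plus Lax--Milgram) coincides with the paper's. The one genuinely different ingredient is how you obtain the reverse bound $\|\boldsymbol\varphi\|_{H^{1/2}(\partial\Omega)^n}\lesssim\|\nabla{\bf u}\|_{L_2(\Omega_+\cup\,\Omega_-)^{n\times n}}$ for zero-boundary-mean $\boldsymbol\varphi$: you split $\boldsymbol\varphi=(\gamma_-{\bf u}_--{\bf m})-(\gamma_+{\bf u}_+-{\bf m})$ and combine the seminorm equivalence \eqref{seminorm} on the exterior weighted space with a boundary-mean Poincar\'e inequality on $\Omega_+$, whereas the paper funnels exactly the same zero-mean information through Lemma \ref{equiv-norm-Sobolev}, i.e.\ the Tartar-lemma equivalent norm $|\!|\!|{\bf v}|\!|\!|^2=\|\nabla{\bf v}\|^2_{L_2(\Omega_+\cup\,\Omega_-)^{n\times n}}+|\int_{\partial\Omega}[\gamma{\bf v}]\,d\sigma|^2$ on ${\mathcal H}^1(\mathbb R^n\setminus\partial\Omega)^n$. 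Both rest on a compactness argument; yours is more self-contained at the price of invoking the (standard, but unproved here) boundary-mean Poincar\'e inequality, while the paper's packages the compactness once in an auxiliary lemma that it can reuse. Two further minor differences, neither a defect: your kernel argument also determines the pressure ($c_0=0$), which is not needed to conclude $\boldsymbol\varphi\in\mathbb R^n$ (the paper stops at ${\bf u}_+={\bf b}$, ${\bf u}_-={\bf 0}$, $\boldsymbol\varphi=-{\bf b}$); and you use Remark \ref{Leray} to kill the constant on $\Omega_-$ where the paper uses the embedding ${\mathcal H}^1(\Omega_-)^n\hookrightarrow L_{2n/(n-2)}(\Omega_-)^n$. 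Your explicit identification of $(H^{1/2}_{**}(\partial\Omega)^n)'$ with $H^{-1/2}_{**}(\partial\Omega)^n$ is bookkeeping the paper leaves implicit, and is welcome.
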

\begin{proof}
$(i)$ First, we determine the kernel of the operator ${\bf D}_{\partial\Omega}:H^{-\frac{1}{2}}(\partial\Omega )^n\to H^{\frac{1}{2}}(\partial\Omega )^n$. Thus, assume that $\boldsymbol\varphi \in H^{\frac{1}{2}}(\partial\Omega )^n$ satisfies the equation ${\bf D}_{\partial\Omega}\boldsymbol\varphi ={\bf 0}$ on $\partial \Omega $, and use the notation ${\bf u}_{\boldsymbol\varphi} :={\bf W}_{\partial\Omega}\boldsymbol\varphi $ and $\pi _{\boldsymbol\varphi} :={\mathcal Q}_{\partial\Omega}^d\boldsymbol\varphi $. 
Then  jump relations \eqref{jump-dl-v-var-alpha}, the first Green identity \eqref{Green-particular-p} in Lemma \ref{lem-add1}, and assumption \eqref{mu} imply that
$\nabla({\bf u}_{\boldsymbol\varphi} )=0$ in $\Omega _\pm$. Then there exists a constant ${\bf b}\in {\mathbb R}^n$ such that ${\bf u}_{\boldsymbol\varphi} ={\bf b}$ in $\Omega _+$
and the inclusion
${\bf u}_{\boldsymbol\varphi} \in {\mathcal H}^1(\Omega _{-})^n\hookrightarrow L_{\frac{2n}{n-2}}({\Omega _{-}})^n$ implies that ${\bf u}_{\boldsymbol\varphi} ={\bf 0}$ in $\Omega _{-}$. 
Then by using again the jump relations \eqref{jump-dl-v-var-alpha} we deduce that $\boldsymbol\varphi =-{\bf b}$. 

Let ${\bf c}\in {\mathbb R}^n$ and let 
${\bf u}_{\bf c}:=-{\bf c}\chi _{\Omega _+}$,
$\pi _{\bf c}:=0$ in ${\mathbb R}^n.$
Then the pair $({\bf u}_{\bf c},\pi _{\bf c})$ belongs to ${\mathcal H}^1(\Omega_\pm)^n\times L_2({\mathbb R}^n)$ and satisfies transmission problem \eqref{transmission-B-dl-var} with  $\boldsymbol\varphi={\bf c}$. 
Then Definition \ref{d-l-variational-var} yields that ${\bf W}_{\partial\Omega}({\bf c})={\bf u}_{\bf c}$ and ${\mathcal Q}^d_{\partial\Omega}({\bf c})=0$ in ${\mathbb R}^n$, and by the second formula in \eqref{dlp-oper-var} we obtain ${\bf D}_{\partial\Omega}({\bf c})={\bf 0}$ on $\partial \Omega $.
Therefore, ${\rm{Ker }}\, {\bf D}_{\partial\Omega}={\mathbb R}^n$.

Now let $\boldsymbol\varphi \in H^{\frac{1}{2}}(\partial\Omega )^n$. 
By applying the first Green identity \eqref{Green-particular-p} to the pair 
$(\mathbf u,\pi)=({\bf W}_{\partial\Omega}\boldsymbol\varphi ,{\mathcal Q}_{\partial\Omega}^d\boldsymbol\varphi )$ and $\mathbf w=-\chi _{\Omega _+}$ and by using the second jump relation in \eqref{jump-dl-v-var-alpha}, we obtain that
%\begin{align}
${\langle {\bf D}_{\partial\Omega}\boldsymbol\varphi ,{\bf 1}\rangle _{\partial \Omega }=0}$, 
%\end{align}
and hence the membership of ${\bf D}_{\partial\Omega}\boldsymbol\varphi $ in $H_{**}^{-\frac{1}{2}}(\partial\Omega )^n$.

$(ii)$
Next, we show the invertibility of operator \eqref{hyp-v-isom}. First, we note that relations \eqref{kernel-range-dl} imply that this operator is injective on the closed subspace $H_{**}^{\frac{1}{2}}(\partial\Omega )^n$ of $H^{\frac{1}{2}}(\partial\Omega )^n$, and that its range is a subset of $H_{**}^{-\frac{1}{2}}(\partial\Omega )^n$. Moreover, we assert that there is ${\mathcal C}\!=\!{\mathcal C}(\partial \Omega ,c_{\mathbb A},n)\!>\!0$ such that
\begin{align}
\label{coercive-dlc}
\langle -{\bf D}_{\partial\Omega}\boldsymbol\varphi ,\boldsymbol\varphi \rangle _{\partial \Omega}\geq {\mathcal C}\|\boldsymbol\varphi \|_{H^{\frac{1}{2}}(\partial \Omega )^n}^2,\ \forall \, \boldsymbol\varphi \in H_{**}^{\frac{1}{2}}(\partial\Omega )^n
\end{align}
(see also \cite[Proposition 6.5]{Sa-Se} in the constant coefficient case).
To this end, $\boldsymbol\varphi \in H_{0}^{\frac{1}{2}}(\partial\Omega )^n$ and we apply the first Green identity \eqref{Green-particular-p} to the pair 
$({\bf u}_{\boldsymbol\varphi} ,\pi_{\boldsymbol\varphi}):=
({\bf W}_{\partial\Omega}\boldsymbol\varphi,{\mathcal Q}_{\partial\Omega}^d\boldsymbol\varphi )$ 
and $\mathbf w=\mathbf u_{\boldsymbol\varphi}={\bf W}_{\partial\Omega}\boldsymbol\varphi$, and use the jump relations \eqref{jump-dl-v-var-alpha} and conditions \eqref{mu} 
to obtain the inequality
\begin{align}
\label{coercive-dlc-5}
\langle -{\bf D}_{\partial\Omega}\boldsymbol\varphi ,\boldsymbol\varphi \rangle _{\partial \Omega }\geq c_{\mathbb A} ^{-1}\|\nabla ({\bf u}_{\boldsymbol\varphi})\|_{L_2(\Omega_+\cup\,\Omega_-)^{n\times n}}^2\,.
\end{align}

On the other hand, the continuity of the trace operators $\gamma _\pm :{\mathcal H}^1(\Omega_\pm)^n\to H^{\frac{1}{2}}(\partial \Omega )^n$ and the first in jump relations \eqref{jump-dl-v-var-alpha} imply that there exists a constant ${\mathcal C}_1={\mathcal C}_1(\partial \Omega ,c_{\mathbb A},n)>0$ such that
\begin{align}
\label{coercive-dlc-1}
\!\!\!\!\!\|\boldsymbol\varphi \|_{H^{\frac{1}{2}}(\partial \Omega )^n}^2=\left\|\, [\gamma {\bf u}_{\boldsymbol\varphi} ]\, \right\|_{H^{\frac{1}{2}}(\partial \Omega )^n}^2
\leq {\mathcal C}_1\|{\bf u}_{\boldsymbol\varphi}\|_{{\mathcal H}^1(\mathbb R^n\setminus\partial\Omega)^n}^2\,.
\end{align}
Note that 
the formula
\begin{align}
\label{coercive-dlc-2}
{|\!|\!|{\bf v}|\!|\!|^2:=\|\nabla {\bf v}\|_{L_2(\Omega_+\cup\,\Omega_-)^{n\times n}}^2
+\left|\int_{\partial \Omega }[\gamma {\bf v}]d\sigma \right|^2, \ \forall \, 
{\bf v}\in {\mathcal H}^1(\mathbb R^n\setminus\partial\Omega)^n}
\end{align}
defines a norm on ${\mathcal H}^1(\mathbb R^n\setminus\partial\Omega)^n$ 
equivalent to the norm $\|\cdot \|_{{\mathcal H}^1({\mathbb R}^n\setminus \partial \Omega )^n}$ 
(see Lemma \ref{equiv-norm-Sobolev}).
Thus,
\begin{align}
\label{coercive-dlc-3}
\|{\bf v}\|_{{\mathcal H}^1(\mathbb R^n\setminus\partial\Omega)^n}
\leq {\mathcal C}_2|\!|\!|{\bf v}|\!|\!|,\ \forall \, {\bf v}\in {\mathcal H}^1(\mathbb R^n\setminus\partial\Omega)^n\,,
\end{align}
with some constant ${\mathcal C}_2>0$.
On the other hand, by choosing ${\bf v}={\bf u}_{\boldsymbol\varphi} $ in \eqref{coercive-dlc-2} and using again the jump formulas \eqref{jump-dl-v-var-alpha} and the assumption that $\boldsymbol\varphi \in H_{**}^{\frac{1}{2}}(\partial\Omega )^n$ and inequality \eqref{coercive-dlc-3}, we obtain
\begin{align}
\label{coercive-dlc-4}
\|\nabla ({\bf u}_{\boldsymbol\varphi})\|_{L_2(\Omega_+\cup\,\Omega_-)^{n\times n}}^2
=|\!|\!|{\bf u}_{\boldsymbol\varphi} |\!|\!|^2
\geq {{\mathcal C}_2^{-2}}\|{\bf u}_{\boldsymbol\varphi} \|_{{\mathcal H}^1(\mathbb R^n\setminus\partial\Omega)^n}^2\,.
\end{align}
Finally, inequalities \eqref{coercive-dlc-5}, \eqref{coercive-dlc-1} and \eqref{coercive-dlc-4} yield the coercivity inequality \eqref{coercive-dlc} with ${\mathcal C}=c_{\mathbb A}^{-1}{\mathcal C}_1^{-1}{\mathcal C}_2^{-2}$. Then the Lax-Milgram lemma implies that operator \eqref{hyp-v-isom} is an isomorphism.
\end{proof}

\section{Transmission problems for the anisotropic Stokes and Navier-Stokes systems with $L_{\infty}$ coefficients. Well-posedness in weighted Sobolev spaces}

The potentials introduced in the previous sections make the analysis of more general transmission problems for Stokes and Navier-Stokes systems rather elementary.

Let us consider the spaces
\begin{align}
\label{sol}
&{\mathfrak A}_p:=({H}^1_{p}(\Omega _+)^n\times L_p(\Omega _+))\times ({\mathcal H}^1_{p}(\Omega _-)^n\times L_p(\Omega _-))\,,\\
\label{data}
&{\mathfrak F}_p:=\widetilde{H}^{-1}_{p}(\Omega _{+})^n\times \widetilde{\mathcal H}^{-1}_{p}(\Omega _{-})^n\times B_{p,p}^{1-\frac{1}{p}}(\partial \Omega )^n\times B_{p,p}^{-\frac{1}{p}}(\partial \Omega )^n\,.
\end{align}

\subsection{Poisson problem of transmission type for the anisotropic Stokes system}

First, for the given data $(\tilde{\bf f}_+,\tilde{\bf f}_-,{\bf h},{\bf g})$ in ${\mathfrak F}_p$, we consider the Poisson problem of transmission type for the anisotropic Stokes system
\begin{equation}
\label{Dirichlet-var-Stokes}
\left\{
\begin{array}{ll}
\partial _\alpha\left(A^{\alpha \beta }\partial _\beta {\bf u}_\pm \right)-\nabla \pi _\pm ={\tilde{\bf f}_{\pm}}|_{\Omega _{\pm }},\
{\rm{div}} \, {\bf u}_\pm =0 & \mbox{ in } \Omega _{\pm },\
\\
{\gamma }_{+}{\bf u}_+-{\gamma }_{-}{\bf u}_{-}={\bf h} &  \mbox{ on } \partial \Omega ,\\
{\bf T}^{+}({\bf u}_+,\pi _+;\tilde{\bf f}_+)-{\bf T}^{-}({\bf u}_-,\pi _- ;\tilde{\bf f}_-)={\bf g} &  \mbox{ on } \partial \Omega .
\end{array}\right.
\end{equation}
The left-hand side in the last transmission condition in \eqref{Dirichlet-var-Stokes} is to be understood in the sense of formal conormal derivatives, cf. Definition~\ref{conormal-derivative-var-Brinkman}. 
\begin{theorem}
\label{T-p}
Let $\mathbb A$ satisfy conditions \eqref{Stokes-1} and \eqref{mu}, $p_*\in (2,\infty )$ be as in Lemma~\ref{lemma-a47-1-Stokes} and $p\in {\mathcal R}(p_*,n)$, cf. \eqref{p*}.
Then for all given data $(\tilde{\bf f}_+,\tilde{\bf f}_-,{\bf h},{\bf g})$ in ${\mathfrak F}_p$, the transmission problem \eqref{Dirichlet-var-Stokes} has a unique solution $(({\bf u}_+,\pi _+),({\bf u}_-,\pi _-))\in {\mathfrak A}_p$. Moreover, there exists a constant $C=C(\partial \Omega , c_{\mathbb A} ,p,n)>0$ such that
\begin{align}
\label{solution-transm}
\|(({\bf u}_+,\pi _+),({\bf u}_-,\pi _-))\|_{{\mathfrak A}_p}\leq C\|(\tilde{\bf f}_+,\tilde{\bf f}_-,{\bf h},{\bf g})\|_{{\mathfrak F}_p}\,.
\end{align}
\end{theorem}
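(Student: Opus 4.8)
The plan is to produce the solution explicitly from the Newtonian and layer potentials built in Section~\ref{N-S-D}, so that verifying \eqref{Dirichlet-var-Stokes} reduces to bookkeeping with the jump relations, and to prove uniqueness separately by collapsing the homogeneous transmission problem to the whole-space anisotropic Stokes system of Theorem~\ref{Brinkman-problem-p}.

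\emph{Reduction of the volume data.} Define $\tilde{\bf f}\in{\mathcal H}^{-1}_{p}({\mathbb R}^n)^n$ by $\langle\tilde{\bf f},{\bf w}\rangle_{{\mathbb R}^n}:=\langle\tilde{\bf f}_{+},{\bf w}\rangle_{\Omega_{+}}+\langle\tilde{\bf f}_{-},{\bf w}\rangle_{\Omega_{-}}$ for ${\bf w}\in{\mathcal H}^1_{p'}({\mathbb R}^n)^n$; boundedness of the restrictions ${\mathcal H}^1_{p'}({\mathbb R}^n)^n\to H^1_{p'}(\Omega_{+})^n$ and ${\mathcal H}^1_{p'}({\mathbb R}^n)^n\to{\mathcal H}^1_{p'}(\Omega_{-})^n$ yields $\|\tilde{\bf f}\|_{{\mathcal H}^{-1}_{p}({\mathbb R}^n)^n}\le C\bigl(\|\tilde{\bf f}_{+}\|_{\widetilde H^{-1}_{p}(\Omega_{+})^n}+\|\tilde{\bf f}_{-}\|_{\widetilde{\mathcal H}^{-1}_{p}(\Omega_{-})^n}\bigr)$, and $\tilde{\bf f}|_{\Omega_{\pm}}=\tilde{\bf f}_{\pm}|_{\Omega_{\pm}}$. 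Let $({\bf u}_{0},\pi_{0}):=(\boldsymbol{\mathcal N}_{{\mathbb R}^n}\tilde{\bf f},{\mathcal Q}_{{\mathbb R}^n}\tilde{\bf f})\in{\mathcal H}^1_{p}({\mathbb R}^n)^n\times L_p({\mathbb R}^n)$, cf.\ Definition~\ref{Newtonian-B-var-variable-1} and Lemma~\ref{Newtonian-B-var-1}; then $\boldsymbol{\mathcal L}({\bf u}_{0},\pi_{0})=\tilde{\bf f}_{\pm}|_{\Omega_{\pm}}$ in $\Omega_{\pm}$, so $({\bf u}_{0}|_{\Omega_{\pm}},\pi_{0}|_{\Omega_{\pm}},\tilde{\bf f}_{\pm})\in{\boldsymbol{\mathcal H}}^1_{p}(\Omega_{\pm},{\boldsymbol{\mathcal L}})$, and ${\bf u}_{0}\in{\mathcal H}^1_{p}({\mathbb R}^n)^n$ gives $[\gamma({\bf u}_{0})]={\bf 0}$. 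Inserting $({\bf u}_{0},\pi_{0},\tilde{\bf f})$ into the right-hand side of \eqref{jump-conormal-derivative-1f} and using the weak form of \eqref{Newtonian-S-p} (legitimate since ${\mathcal D}({\mathbb R}^n)^n$ is dense in ${\mathcal H}^1_{p'}({\mathbb R}^n)^n$ and $\partial\Omega$ is Lebesgue-null) makes that right-hand side vanish for every ${\bf w}\in{\mathcal H}^1_{p'}({\mathbb R}^n)^n$, whence $[{\bf T}({\bf u}_{0},\pi_{0};\tilde{\bf f})]={\bf 0}$ by surjectivity of $\gamma$.

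\emph{Construction and estimate.} Put
\[
{\bf u}_{\pm}:=\bigl({\bf u}_{0}+{\bf V}_{\partial\Omega}{\bf g}-{\bf W}_{\partial\Omega}{\bf h}\bigr)\big|_{\Omega_{\pm}},\qquad
\pi_{\pm}:=\bigl(\pi_{0}+{\mathcal Q}^{s}_{\partial\Omega}{\bf g}-{\mathcal Q}^{d}_{\partial\Omega}{\bf h}\bigr)\big|_{\Omega_{\pm}}.
\]
By Lemmas~\ref{continuity-sl-S-h-var} and \ref{continuity-dl-h-var}, ${\bf V}_{\partial\Omega}{\bf g}\in{\mathcal H}^1_{p}({\mathbb R}^n)^n$ and ${\bf W}_{\partial\Omega}{\bf h}\in{\mathcal H}^1_{p}({\mathbb R}^n\setminus\partial\Omega)^n$, so $(({\bf u}_{+},\pi_{+}),({\bf u}_{-},\pi_{-}))\in{\mathfrak A}_p$, both layer potentials satisfy the homogeneous anisotropic Stokes system off $\partial\Omega$ and are divergence free, and the first two lines of \eqref{Dirichlet-var-Stokes} hold. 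The transmission conditions follow from linearity of $\gamma_{\pm}$ and of the (formal, hence here also generalized) conormal derivative in all three arguments, together with $[\gamma({\bf V}_{\partial\Omega}{\bf g})]={\bf 0}$ and $[{\bf T}({\bf V}_{\partial\Omega}{\bf g},{\mathcal Q}^{s}_{\partial\Omega}{\bf g})]={\bf g}$ (from \eqref{sl-identities-var-1}), $[\gamma({\bf W}_{\partial\Omega}{\bf h})]=-{\bf h}$ and $[{\bf T}({\bf W}_{\partial\Omega}{\bf h},{\mathcal Q}^{d}_{\partial\Omega}{\bf h})]={\bf 0}$ (from \eqref{jump-dl-v-var-alpha}), and $[\gamma({\bf u}_{0})]={\bf 0}$, $[{\bf T}({\bf u}_{0},\pi_{0};\tilde{\bf f})]={\bf 0}$: these combine to $\gamma_{+}{\bf u}_{+}-\gamma_{-}{\bf u}_{-}={\bf h}$ and ${\bf T}^{+}({\bf u}_{+},\pi_{+};\tilde{\bf f}_{+})-{\bf T}^{-}({\bf u}_{-},\pi_{-};\tilde{\bf f}_{-})={\bf g}$. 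Estimate \eqref{solution-transm} is then the sum of the continuity bounds of $\boldsymbol{\mathcal N}_{{\mathbb R}^n},{\mathcal Q}_{{\mathbb R}^n}$ (Lemma~\ref{Newtonian-B-var-1}), of ${\bf V}_{\partial\Omega},{\mathcal Q}^{s}_{\partial\Omega}$ (Lemma~\ref{continuity-sl-S-h-var}) and of ${\bf W}_{\partial\Omega},{\mathcal Q}^{d}_{\partial\Omega}$ (Lemma~\ref{continuity-dl-h-var}), together with the bound on $\|\tilde{\bf f}\|_{{\mathcal H}^{-1}_{p}({\mathbb R}^n)^n}$.

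\emph{Uniqueness and the main point.} For a solution of the homogeneous problem ($\tilde{\bf f}_{\pm}={\bf 0}$, ${\bf h}={\bf 0}$, ${\bf g}={\bf 0}$) in ${\mathfrak A}_p$, glue the pieces into $({\bf u},\pi)\in{\mathcal H}^1_{p}({\mathbb R}^n\setminus\partial\Omega)^n\times L_p({\mathbb R}^n)$; $[\gamma({\bf u})]={\bf 0}$ and the characterization recorded before Remark~\ref{Leray} give ${\bf u}\in{\mathcal H}^1_{p}({\mathbb R}^n)^n$, hence ${\rm{div}}\,{\bf u}=0$ in ${\mathbb R}^n$. By Lemma~\ref{lemma-add-new}, formula \eqref{jump-conormal-derivative-1f0}, and $[{\bf T}({\bf u},\pi)]={\bf 0}$, one gets $\langle A^{\alpha\beta}\partial_{\beta}{\bf u},\partial_{\alpha}{\bf w}\rangle_{{\mathbb R}^n}-\langle\pi,{\rm{div}}\,{\bf w}\rangle_{{\mathbb R}^n}=0$ for all ${\bf w}\in{\mathcal H}^1_{p'}({\mathbb R}^n)^n$, i.e.\ $({\bf u},\pi)$ solves \eqref{Newtonian-S-p} with ${\bf f}={\bf 0}$; Theorem~\ref{Brinkman-problem-p} forces ${\bf u}={\bf 0}$, $\pi=0$, hence $({\bf u}_{\pm},\pi_{\pm})=({\bf 0},0)$. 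The whole argument is verification on top of the machinery of Section~\ref{N-S-D}; I expect the only delicate point to be the volume-data reduction, namely ensuring $({\bf u}_{0}|_{\Omega_{\pm}},\pi_{0}|_{\Omega_{\pm}},\tilde{\bf f}_{\pm})\in{\boldsymbol{\mathcal H}}^1_{p}(\Omega_{\pm},{\boldsymbol{\mathcal L}})$ with the split right-hand side, so that the generalized conormal derivatives are well defined and their jump vanishes.
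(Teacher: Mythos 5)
Your proof is correct and follows the same overall strategy as the paper: represent the solution by Newtonian plus single and double layer potentials, read off the transmission conditions from the jump relations, and get uniqueness from the earlier whole-space/transmission well-posedness results. The one genuine (if small) difference is in how the volume data are handled: the paper takes the two separate Newtonian potentials $\boldsymbol{\mathcal N}_{\mathbb R^n}\tilde{\bf f}_{+}|_{\Omega_+}$ and $\boldsymbol{\mathcal N}_{\mathbb R^n}\tilde{\bf f}_{-}|_{\Omega_-}$, which do not match across $\partial\Omega$, and therefore replaces $({\bf h},{\bf g})$ by corrected densities $({\bf h}_0,{\bf g}_0)$ absorbing the resulting trace and conormal-derivative jumps; you instead assemble a single functional $\tilde{\bf f}=\tilde{\bf f}_{+}+\tilde{\bf f}_{-}$ on ${\mathbb R}^n$ and use one Newtonian potential, whose jumps $[\gamma({\bf u}_0)]$ and $[{\bf T}({\bf u}_0,\pi_0;\tilde{\bf f})]$ vanish by \eqref{jump-conormal-derivative-1f} and the whole-space weak formulation, so the layer densities can be taken to be ${\bf g}$ and ${\bf h}$ themselves. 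By linearity of $\boldsymbol{\mathcal N}_{\mathbb R^n}$ and uniqueness the two representations coincide; yours is slightly cleaner in that no corrected data need to be computed, while the paper's keeps the two subdomain problems visibly separate. Your uniqueness argument routes through Theorem~\ref{Brinkman-problem-p} rather than Theorem~\ref{slp-var-apr-1-p} as in the paper, but these are equivalent here since the homogeneous transmission problem with $[\gamma{\bf u}]={\bf 0}$, $[{\bf T}({\bf u},\pi)]={\bf 0}$ is exactly the homogeneous whole-space system.
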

\begin{proof}
Theorem \ref{slp-var-apr-1-p} yields uniqueness.
Now we show existence, by considering the potentials
\begin{align*}
%\label{existence-linear}
&{\bf u}_\pm =\big(\boldsymbol{\mathcal N}_{\mathbb R^n}\tilde{\bf f}_\pm \big)|_{\Omega _\pm }+{\bf V}_{\partial\Omega}{\bf g}_0-{\bf W}_{\partial\Omega}{\bf h}_0,\
\pi _\pm =\big({\mathcal Q}_{\mathbb R^n}\tilde{\bf f}_\pm \big)|_{\Omega _\pm }+{\mathcal Q}^s_{\partial\Omega}{\bf g}_0-{\mathcal Q}^d_{\partial\Omega}{\bf h}_0 \mbox{ in } \Omega _\pm ,
\\
%\label{existence-linear-1}
&{\bf h}_0:={\bf h}-\big\{\gamma _+\big(\big(\boldsymbol{\mathcal N}_{\mathbb R^n}\tilde{\bf f}_+\big)|_{\Omega _+}\big)-\gamma _-\big(\big(\boldsymbol{\mathcal N}_{\mathbb R^n}\tilde{\bf f}_{-}\big)|_{\Omega _-}\big)\big\},\\
&{\bf g}_0:={\bf g}-\big\{{\bf T}^+\big(\big(\boldsymbol{\mathcal N}_{\mathbb R^n}\tilde{\bf f}_+\big)|_{\Omega _+},\big({\mathcal Q}_{\mathbb R^n}\tilde{\bf f}_+\big)|_{\Omega _+};\tilde{\bf f}_+\big)-{\bf T}^{-}\big(\big(\boldsymbol{\mathcal N}_{\mathbb R^n}\tilde{\bf f}_{-}\big)|_{\Omega _-},\big({\mathcal Q}_{\mathbb R^n}\tilde{\bf f}_-\big)|_{\Omega _-};\tilde{\bf f}_-\big)\big\},
\end{align*}
where ${\bf h}_0\in B_{p,p}^{1-\frac{1}{p}}(\partial \Omega )^n$ and ${\bf g}_0\in B_{p,p}^{-\frac{1}{p}}(\partial \Omega )^n$.
According to Definitions \ref{Newtonian-B-var-variable-1}, \ref{s-l-S-variational-variable} and  \ref{d-l-variational-var},  and Lemmas \ref{continuity-sl-S-h-var} and \ref{continuity-dl-h-var} (ii), we deduce that $(({\bf u}_+,\pi _+),({\bf u}_-,\pi _-))$ given above is the unique solution of the transmission problem \eqref{Dirichlet-var-Stokes} in the space ${\mathfrak A}_p$. Moreover, the operator
\begin{align}
\label{solution-oper}
\boldsymbol{\mathcal T}_{(p)}:{\mathfrak F}_p\to {\mathfrak A}_p\,,
\end{align}
which associates to the given data $(\tilde{\bf f}_+,\tilde{\bf f}_-,{\bf h},{\bf g})\in {\mathfrak F}_p$ the unique solution $(({\bf u}_+,\pi _+),({\bf u}_-,\pi _-))\in {\mathfrak A}_p$ of the transmission problem \eqref{Dirichlet-var-Stokes}, is bounded and linear, implying also inequality \eqref{solution-transm}.
\end{proof}

\subsection{Poisson problem with transmission conditions for the anisotropic Stokes and Navier-Stokes systems in $L_p$-based weighted Sobolev spaces}

In this subsection we restrict our analysis to the cases $n=3$ and $n=4$, for which some necessary embedding results hold.
Next, we consider the following Poisson problem of transmission type for the Stokes and Navier-Stokes systems
\begin{equation}
\label{Stokes-NS}
\left\{
\begin{array}{ll}
\partial _\alpha\left(A^{\alpha \beta }\partial _\beta {\bf u}_+\right)-\nabla \pi _+={\tilde{\bf f}_{+}}|_{\Omega _{+}}+\lambda ({\bf u}_+\cdot \nabla ){\bf u}_+\,, \
{\rm{div}} \, {\bf u}_+=0 & \mbox{ in } \Omega _{+},\
\\
\partial _\alpha\left(A^{\alpha \beta }\partial _\beta {\bf u}_-\right)-\nabla \pi _-={\tilde{\bf f}_{-}}|_{\Omega _{-}}\,, \
{\rm{div}} \, {\bf u}_-=0 & \mbox{ in } \Omega _{-},\
\\
{\gamma }_{+}{\bf u}_+-{\gamma }_{-}{\bf u}_{-}={\bf h} &  \mbox{ on } \partial \Omega ,\\
{\bf T}^{+}({\bf u}_+,\pi _+;\tilde{\bf f}_{+}+\mathring{E}_+\left(\lambda ({\bf u}_+\cdot \nabla ){\bf u}_+\right))-{\bf T}^{-}({\bf u}_-,\pi _- ;\tilde{\bf f}_-)={\bf g} &  \mbox{ on } \partial \Omega ,
\end{array}\right.
\end{equation}
with $\mathring{E}_+$ the extension by zero outside $\Omega _+$,
$\lambda \!\in \!L_{\infty }(\Omega _+)$,
and the left-hand side in the last transmission condition in \eqref{Stokes-NS} is to be understood in the sense of formal conormal derivatives, cf. Definition~\ref{conormal-derivative-var-Brinkman}. 
We will show the following result (see \cite[Theorem 5.2]{K-L-M-W} for the Stokes and Navier-Stokes systems in the isotropic constant-coefficient case, $\mathbb A=\mathbb I$.
\begin{theorem}
\label{well-posed-N-S-Stokes} 
Let $n\in\{3,4\}$, $\mathbb A$ satisfy conditions \eqref{Stokes-1} and \eqref{mu}, $\lambda \in L_{\infty }(\Omega _+)$,  and $p_*\in (2,\infty )$ be as in Lemma~\ref{lemma-a47-1-Stokes}. 
Then 
for any $p\in \left(\frac{p_*}{p_*-1},p_*\right)\cap \left[\frac{n}{2},n\right)$ there exist two constants, 
$\zeta _p,\eta _p>0$ depending on ${\Omega }_{+}$, ${\Omega }_{-}$, $\lambda $, $c_{\mathbb A}$, $n$, and $p$, with the property that for all given data $\big({\tilde{\bf f}}_{+},{\tilde{\bf f}}_{-},{\bf h},{\bf g}\big)\in {\mathfrak F}_p$ satisfying the condition
%\begin{align}
%\label{eta}
$\|\big({\tilde{\bf f}}_{+},{\tilde{\bf f}}_{-},{\bf h},{\bf g}\big)\|_{{\mathfrak F}_p}\leq \zeta _p,$
%\end{align}
the transmission problem \eqref{Stokes-NS} has a unique solution 
$\left(({\bf u}_{+},\pi _{+}),({\bf u}_{-},\pi _{-})\right)\in {\mathfrak A}_p$, such that
%\begin{align}
%\label{ball}
$\|{\bf u}_+\|_{H^1(\Omega _+)^n}\leq \eta _p\,.$
%\end{align}
\end{theorem}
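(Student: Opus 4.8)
The plan is to regard the convective term $\lambda(\mathbf u_+\cdot\nabla)\mathbf u_+$ as a perturbation of the interior datum $\tilde{\mathbf f}_+$ and to solve the resulting fixed-point equation for $\mathbf u_+$ by the Banach contraction principle, inverting the linear transmission problem with the bounded solution operator $\boldsymbol{\mathcal T}_{(p)}\colon\mathfrak F_p\to\mathfrak A_p$ furnished by Theorem~\ref{T-p}. For $\mathbf w\in H^1_p(\Omega_+)^n$ I would put $N(\mathbf w):=\mathring E_+\big(\lambda(\mathbf w\cdot\nabla)\mathbf w\big)$ and define $\Lambda(\mathbf w)$ to be the $\mathbf u_+$-component of $\boldsymbol{\mathcal T}_{(p)}\big(\tilde{\mathbf f}_++N(\mathbf w),\tilde{\mathbf f}_-,\mathbf h,\mathbf g\big)$. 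A tuple $\big((\mathbf u_+,\pi_+),(\mathbf u_-,\pi_-)\big)\in\mathfrak A_p$ then solves \eqref{Stokes-NS} precisely when $\mathbf u_+=\Lambda(\mathbf u_+)$ and the other three components are the remaining outputs of $\boldsymbol{\mathcal T}_{(p)}$ for the datum $(\tilde{\mathbf f}_++N(\mathbf u_+),\tilde{\mathbf f}_-,\mathbf h,\mathbf g)$, because the last transmission condition in \eqref{Stokes-NS} is exactly that of \eqref{Dirichlet-var-Stokes} with $\tilde{\mathbf f}_+$ replaced by $\tilde{\mathbf f}_++N(\mathbf u_+)$ (here the formal conormal derivative of Definition~\ref{conormal-derivative-var-Brinkman} is used consistently on both sides).

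The first substantial step is the nonlinear estimate: that $(\mathbf w,\mathbf z)\mapsto\mathring E_+\big(\lambda(\mathbf w\cdot\nabla)\mathbf z\big)$ is a bounded bilinear map from $H^1_p(\Omega_+)^n\times H^1_p(\Omega_+)^n$ into $\widetilde{H}^{-1}_p(\Omega_+)^n$. I would obtain this from the Sobolev embedding $H^1_p(\Omega_+)\hookrightarrow L_{np/(n-p)}(\Omega_+)$, from $\nabla\mathbf z\in L_p(\Omega_+)^{n}$, Hölder's inequality and $\lambda\in L_\infty(\Omega_+)$ — which place $\lambda(\mathbf w\cdot\nabla)\mathbf z$ in $L_r(\Omega_+)$ with $\tfrac1r=\tfrac2p-\tfrac1n$ — followed by the dual embedding $L_r(\Omega_+)\hookrightarrow\widetilde{H}^{-1}_p(\Omega_+)$, valid exactly when $\tfrac1r\le\tfrac1p+\tfrac1n$. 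Combining the two inequalities forces $p\ge n/2$; together with the requirement that $2$ lie in $[\tfrac n2,n)$ (so that the $L_2$-based theory on which Theorem~\ref{T-p} rests is available within the range) this singles out $n\in\{3,4\}$, which explains the dimensional restriction. The outcome is a constant $C_N>0$ with $\|N(\mathbf w)-N(\mathbf z)\|_{\widetilde{H}^{-1}_p(\Omega_+)^n}\le C_N\big(\|\mathbf w\|_{H^1_p(\Omega_+)^n}+\|\mathbf z\|_{H^1_p(\Omega_+)^n}\big)\|\mathbf w-\mathbf z\|_{H^1_p(\Omega_+)^n}$, and in particular $\|N(\mathbf w)\|_{\widetilde{H}^{-1}_p(\Omega_+)^n}\le C_N\|\mathbf w\|_{H^1_p(\Omega_+)^n}^2$.

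Next I would run the contraction argument. Writing $C_{\mathcal T}$ for the operator norm of $\boldsymbol{\mathcal T}_{(p)}$ and $\delta:=\|(\tilde{\mathbf f}_+,\tilde{\mathbf f}_-,\mathbf h,\mathbf g)\|_{\mathfrak F_p}$, linearity and boundedness of $\boldsymbol{\mathcal T}_{(p)}$ together with the previous step give $\|\Lambda(\mathbf w)\|_{H^1_p(\Omega_+)^n}\le C_{\mathcal T}\big(\delta+C_N\|\mathbf w\|_{H^1_p(\Omega_+)^n}^2\big)$ and $\|\Lambda(\mathbf w)-\Lambda(\mathbf z)\|_{H^1_p(\Omega_+)^n}\le C_{\mathcal T}C_N\big(\|\mathbf w\|_{H^1_p(\Omega_+)^n}+\|\mathbf z\|_{H^1_p(\Omega_+)^n}\big)\|\mathbf w-\mathbf z\|_{H^1_p(\Omega_+)^n}$. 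I would then fix $r>0$ with $2C_{\mathcal T}C_N r<1$ and pick $\zeta_p>0$ so small that $C_{\mathcal T}(\zeta_p+C_N r^2)\le r$; for $\delta\le\zeta_p$ the map $\Lambda$ leaves the closed ball $\overline B_r$ of radius $r$ in $H^1_p(\Omega_+)^n$ invariant and is a strict contraction on it, so by the Banach fixed-point theorem it has a unique fixed point $\mathbf u_+\in\overline B_r$. The associated tuple $((\mathbf u_+,\pi_+),(\mathbf u_-,\pi_-))=\boldsymbol{\mathcal T}_{(p)}(\tilde{\mathbf f}_++N(\mathbf u_+),\tilde{\mathbf f}_-,\mathbf h,\mathbf g)\in\mathfrak A_p$ solves \eqref{Stokes-NS}; conversely, any solution in $\mathfrak A_p$ whose $\mathbf u_+$ lies in $\overline B_r$ is a fixed point of $\Lambda$, hence coincides with it. Taking $\eta_p$ proportional to $r$ and using the embedding $H^1_p(\Omega_+)\hookrightarrow H^1(\Omega_+)$ when $p\ge 2$ (with an elementary additional argument in the remaining sub-range in dimension $3$), this yields uniqueness in the class $\{\|\mathbf u_+\|_{H^1(\Omega_+)^n}\le\eta_p\}$ and the stated bound, while \eqref{solution-transm} gives the quantitative control of the full solution in terms of $\delta$.

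I expect the main obstacle to be the nonlinear estimate of the second step, that is, controlling $\lambda(\mathbf u_+\cdot\nabla)\mathbf u_+$ in the dual space $\widetilde{H}^{-1}_p(\Omega_+)^n$ which occupies the first data slot of $\mathfrak F_p$ and simultaneously feeds the conormal-derivative transmission condition. The exponents must be tracked sharply to see that the admissible range is exactly $p\in(\tfrac{p_*}{p_*-1},p_*)\cap[\tfrac n2,n)$ with $n\in\{3,4\}$, and one has to verify that replacing $\tilde{\mathbf f}_+$ by $\tilde{\mathbf f}_++N(\mathbf u_+)$ is compatible with the definition of the formal conormal derivatives, so that \eqref{Stokes-NS} and the linear problem \eqref{Dirichlet-var-Stokes} with perturbed data are genuinely equivalent.
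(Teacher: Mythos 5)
Your proposal follows essentially the same route as the paper's proof: the convective term is treated as a perturbation of $\tilde{\bf f}_+$, estimated in $\widetilde H^{-1}_p(\Omega_+)^n$ via the Sobolev embedding $H^1_p(\Omega_+)\hookrightarrow L_{np/(n-p)}(\Omega_+)$, H\"older's inequality and the dual embedding (yielding the restriction $p\ge n/2$ and hence $n\in\{3,4\}$), and the resulting fixed-point equation for ${\bf u}_+$ is solved by the Banach contraction principle on a small ball using the linear solution operator $\boldsymbol{\mathcal T}_{(p)}$ from Theorem \ref{T-p}. The only differences are cosmetic (your choice of radius and smallness constants versus the explicit $\eta_p=(4c_1c_*)^{-1}$, $\zeta_p=3\eta_p/(4c_*)$ in the paper), so the argument is correct as it stands.
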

\begin{proof}
Let
\begin{align}
\label{I-oper}
&\widetilde{\mathcal I}_{\lambda ;{\Omega }_{+}}({\bf v}):=\mathring{E}_+\left(\lambda ({\bf v}\cdot \nabla ){\bf v}\right),\ \forall \, {\bf v}\in H^1_{p}(\Omega _+)^n\,.
\end{align}
The Sobolev embedding Theorem (cf. Theorem 4.12 in \cite{Adams2003}) implies  that for any 
$p\in\left[\frac{n}{2},n\right)$, the embeddings
\begin{align}
H_p^1(\Omega _+)\hookrightarrow L_{\frac{np}{n-p}}(\Omega _+)\hookrightarrow L_{n}(\Omega _+),\ 
H_{p'}^1(\Omega _+)\hookrightarrow L_{\frac{np'}{n-p'}}(\Omega _+)
\end{align}
are continuous, and by duality the last embedding implies that the embedding
\begin{align}
L_{\frac{np}{n+p}}(\Omega _+)\hookrightarrow \widetilde H_{p}^{-1}(\Omega _+)
\end{align}
is also continuous. Applying the H\"{o}lder inequality we then deduce 
\begin{multline}
\label{ineq-2}
\|\mathring{E}_+\left(\lambda {v}{w}\right)\|_{\widetilde H^{-1}_{p}(\Omega_+)}
\le c_0 \|\lambda \|_{L_\infty(\Omega_+)}\|vw\|_{L_{\frac{np}{n+p}}(\Omega_+)}\\
\le c_0\|\lambda \|_{L_\infty(\Omega_+)}\|v\|_{L_n(\Omega_+)}\|w\|_{L_p(\Omega_+)}
\leq c_1\|{v}\|_{H^1_{p}(\Omega _{+})}\|{w}\|_{L_p(\Omega _+)}
\end{multline}
(see also \cite[Lemma 5.1]{K-L-M-W} for $p\!=\!2$, and \cite[Lemma 11.3]{Med-ZAMP-18}).

Therefore, for any ${\bf v}\in H^1_{p}(\Omega _{+})^n$ (and accordingly $\nabla {\bf v}\in L_p(\Omega _+)^{n\times n}$), we obtain that 
$\widetilde{\mathcal I}_{\lambda ;{\Omega }_{+}}({\bf v})\in \widetilde H^{-1}_{p}(\Omega_+)^n$ and
\begin{align}
\label{ineq-1-0}
&
\|\widetilde{\mathcal I}_{\lambda ;{\Omega }_{+}}({\bf v})\|_{\widetilde H^{-1}_{p}(\Omega_+)^n}
\leq c_1\|{\bf v}\|_{H^1_{p}(\Omega _{+})^n}\|\nabla {\bf v}\|_{L_p(\Omega _+)^{n\times n}}
\leq c_1\|{\bf v}\|_{H^1_{p}(\Omega _{+})^n}^2,\\
\label{ineq-3}
&\|\widetilde{\mathcal I}_{\lambda ;\Omega_+}({\bf v})
-\widetilde{\mathcal I}_{\lambda ;\Omega_+}({\bf w})\|_{\widetilde H^{-1}_p(\Omega_+)^n}
\leq c_1\left(\|{\bf v}\|_{H^1_{p}(\Omega _{+})^n}+\|{\bf w}\|_{H^1_{p}(\Omega _{+})^n}\right)
\|{\bf v}-{\bf w}\|_{H^1_{p}(\Omega _{+})^n}\,.
\end{align}
Thus, the nonlinear operator 
$\widetilde{\mathcal I}_{\lambda ;\Omega_+}:H^1_{p}(\Omega _{+})^n\to \widetilde H^{-1}_{p}(\Omega_+)$ 
is continuous  and bounded in the sense of \eqref{ineq-1-0}.

We now construct a nonlinear operator ${\mathcal U}_{(p);+}$ that maps a closed ball ${\bf B}_{\eta_{p}}$ of the space $H^1_{p;\mathrm{div}}({\Omega }_{+})^n$ (of divergence-free vector fields in $H^1_{p}({\Omega }_{+})^n$) to ${\bf B}_{\eta_{p}}$ and is a contraction on ${\bf B}_{\eta_{p}}$. Then the unique fixed point of ${\mathcal U}_{(p):+}$ will determine a solution of nonlinear problem \eqref{Stokes-NS}.

For a fixed $\mathbf u_+\in H^1_{p;\mathrm{div}}(\Omega _{+})^n$, we consider the following linear Poisson problem of transmission type for the Stokes system 
in the unknown $({\bf v}_{+},q_{+})$, $({\bf v}_{-},q_{-})$
\begin{equation}
\label{Newtonian-D-B-F-new2-D-Rn}
\left\{\begin{array}{lll}
\partial _\alpha\left(A^{\alpha \beta }\partial _\beta {\bf v}_+\right)-\nabla q_+={\tilde{\bf f}_{+}}|_{\Omega _{+}}+\left(\widetilde{\mathcal I}_{\lambda ;{\Omega }_{+}}({\bf u}_+)\right)|_{\Omega _{+}}\,, \
{\rm{div}} \, {\bf v}_+=0 & \mbox{ in } \Omega _{+},\
\\
\partial _\alpha\left(A^{\alpha \beta }\partial _\beta {\bf v}_-\right)-\nabla q_-={\tilde{\bf f}_{-}}|_{\Omega _{-}}\,, \
{\rm{div}} \, {\bf v}_-=0 & \mbox{ in } \Omega _{-}\,, \
\\
{\gamma }_{+}{\bf v}_+-{\gamma }_{-}{\bf v}_{-}={\bf h} &  \mbox{ on } \partial \Omega \,,\\
{\bf T}^{+}({\bf v}_+,q_+;\tilde{\bf f}_{+}+\widetilde{\mathcal I}_{\lambda ;{\Omega }_{+}}({\bf u}_+))-{\bf T}^{-}({\bf v}_-,q_- ;\tilde{\bf f}_-)={\bf g} &  \mbox{ on } \partial \Omega \,.
\end{array}\right.
\end{equation}
Since $\big(\tilde{\bf f}_{+}+\widetilde{\mathcal I}_{\lambda ;{\Omega }_{+}}({\bf u}_+)\big)\in \big(H^1_{p'}(\Omega _+)^n\big)'$, Theorem \ref{T-p} implies that problem \eqref{Newtonian-D-B-F-new2-D-Rn} has a unique solution expressed in terms of the linear continuous operator $\boldsymbol{\mathcal T}_{(p)}:{\mathfrak F}_p\to {\mathfrak A}_p$ given by \eqref{solution-oper}, as
\begin{align}
\label{solution-v0}
\left({{\bf v}_+},{q_+},{{\bf v}_-},{q_-}\right)&:=
\left({\mathcal U}_{(p);+}({\bf u}_+),{\mathcal P}_{(p);+}({\bf u}_+),{\mathcal U}_{(p);-}({\bf u}_+),{\mathcal P}_{(p);-}({\bf u}_+)\right)\nonumber\\
&=\boldsymbol{\mathcal T}_{(p)}\big({\tilde{\bf f}}_{+}|_{\Omega _{+}}+\widetilde{\mathcal I}_{\lambda ;\Omega_+}({\bf u}_+)|_{\Omega_+},\ {\tilde{\bf f}}_{-}|_{\Omega _{-}},\ {\bf h},\ {\bf g}\big)\in {\mathfrak A}_p\,.
\end{align}
The nonlinear operator $\widetilde{\mathcal I}_{\lambda ;\Omega_+}:H^1_{p}(\Omega _{+})^n\to (H^1_{p'}(\Omega _+)^n)'$ is continuous as well.
Then by \eqref{ineq-1-0} there exists a constant $c_*=c_*({\Omega }_{+},{\Omega }_{-},n,p,c_{\mathbb A})>0$ such that
\begin{align}
\label{estimate-D-B-F-new1-new1-D-Rn}
\big\|\big({\mathcal U}_{(p);+}({\bf u}_+),&{\mathcal P}_{(p);+}({\bf u}_+),{\mathcal U}_{(p);-}({\bf u}_+),{\mathcal P}_{(p);-}({\bf u}_+)\big)\big\|_{{\mathfrak A}_p}
\nonumber\\
&\leq c_*\big\|\big({\tilde{\bf f}}_{+},{\tilde{\bf f}}_{-},{\bf h},{\bf g}\big)\big\|_{{\mathfrak F}_p}
+c_{*}c_1{\|{\bf u}_{+}\|_{H^1_{p}({\Omega }_{+})^n}^2},
\ \ \forall\, {{\bf u}_{+}\in H^1_{p;\mathrm{div}}({\Omega }_{+})^n}\,.
\end{align}

Next we show that the nonlinear operator ${\mathcal U}_{(p);+}$ has a fixed point 
${\bf u}_{+}\in H_{p;\rm{div}}^{1}({\Omega }_{+})^n$.
Let
\begin{align}
\label{Newtonian-D-B-F-new9n}
\eta _p:=(4c_1c_*)^{-1},\ \zeta _p:=3\eta_p/(4c_*),
\end{align}
and 
$\mathbf{B}_{\eta _p}:=\big\{{\bf v}_{+}\in H^1_{p;\mathrm{div}}({\Omega }_{+})^n:\|{\bf v}_{+}\|_{H^{1}({\Omega }_{+})^n}\leq \eta _p\big\}. $
In addition, assuming that 
\begin{align}
\label{cond-small-sm-msn}
\big\|\big({\tilde{\bf f}}_{+},{\tilde{\bf f}}_{-},{\bf h},{\bf g}\big)\big\|_{{\mathfrak F}_p}\leq \zeta _p,
\end{align}
and using \eqref{estimate-D-B-F-new1-new1-D-Rn}, \eqref{cond-small-sm-msn}, we obtain that ${\mathcal U}_{(p);+}$ maps the closed ball $\mathbf{B}_{\eta _p}$ to itself.

By using expression \eqref{solution-v0} of ${\mathcal U}_{(p);+}$ and inequality \eqref{ineq-3}, we obtain the estimate
\begin{align}
\label{4.30n}
\|{\mathcal U}_{(p);+}({\bf v}_{+})-{\mathcal U}_{(p);+}({\bf w}_{+})
\|_{{H}^1_{p}({\Omega }_{+})^n}
\!\leq \!\frac{1}{2}\|{\bf v}_+\!-\!{\bf w}_+\|_{H^1_{p}({\Omega }_{+})^n},
\end{align}
for all ${\bf v}_+,{\bf w}_+\in \mathbf{B}_{\eta _p}.$
Hence, ${\mathcal U}_{(p);+}:\mathbf{B}_{\eta _p}\to \mathbf{B}_{\eta _p}$ {is a contraction}.
Then the Banach fixed point Theorem yields that ${\mathcal U}_{(p);+}$ has a unique fixed point ${\bf u}_{+}\!\in \!\mathbf{B}_{\eta _p}$, i.e., ${\mathcal U}_{(p);+}({\bf u}_{+})={\bf u}_{+}$, and in view of \eqref{solution-v0}, $\left(({\bf u}_{+},{\mathcal P}_{(p);+}({\bf u}_{+})),({\mathcal U}_{(p);-}({\bf u}_{+}),{\mathcal P}_{(p);-}({\bf u}_{+}))\right)$ determines a solution of the nonlinear problem \eqref{Stokes-NS} in the space ${\mathfrak A}_p$, which is unique, due to an argument similar to that in the proof of \cite[Theorem 5.2]{K-L-M-W}.
\end{proof}

%\appendix
%\section*{Appendix}

\section{Auxiliary results: Equivalent norms in Banach spaces}

The next result plays a major role in establishing the equivalence of norms on Banach spaces, in particular, on some Sobolev spaces that appear in our arguments (cf. \cite[Lemma 11.1]{Tartar}).
\begin{lemma}
\label{Tartar-lemma}
Let $(X,\|\cdot \|_X)$ be a Banach space, and let $(Y,\|\cdot \|_Y)$, $(Z,\|\cdot \|_Z)$, $(\Upsilon ,\|\cdot \|_\Upsilon )$ be normed spaces. Let ${\mathcal P}:X\to Y$, ${\mathfrak C}:X\to Z$ and ${\mathcal T}:X\to \Upsilon $ be linear and continuous operators, such that  
\begin{itemize}
\item[$(i)$]
The operator ${\mathfrak C}:X\to Z$ is compact.
\item[$(ii)$]
$\|P(\cdot )\|_{Y}+\|{\mathfrak C}(\cdot )\|_{Z}$ is a norm on $X$ equivalent to the norm $\|\cdot \|_X$.
\item[$(iii)$]
The operator ${\mathcal T}:X\to \Upsilon $ satisfies the condition ${\mathcal T}(u)\neq 0$ whenever $P(u)=0$ and $u\neq 0$.
\end{itemize}
Then the mapping $|\!|\!|\cdot |\!|\!|:X\to {\mathbb R}_+$ given by
\begin{align}
|\!|\!|u|\!|\!|:=\|P(u)\|_Y+\|{\mathcal T}(u)\|_{\Upsilon },\ u\in X,
\end{align}
is a norm on $X$ equivalent to the given norm $\|\cdot \|_X$.
\end{lemma}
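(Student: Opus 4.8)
The plan is to prove the equivalence in two steps: first that $|\!|\!|\cdot|\!|\!|$ is dominated by a constant times $\|\cdot\|_X$, and then that $\|\cdot\|_X$ is dominated by a constant times $|\!|\!|\cdot|\!|\!|$. The first inequality is the routine direction: since ${\mathcal P}:X\to Y$ and ${\mathcal T}:X\to\Upsilon$ are continuous, there are constants $c_{\mathcal P},c_{\mathcal T}>0$ with $\|{\mathcal P}(u)\|_Y\le c_{\mathcal P}\|u\|_X$ and $\|{\mathcal T}(u)\|_\Upsilon\le c_{\mathcal T}\|u\|_X$, whence $|\!|\!|u|\!|\!|\le(c_{\mathcal P}+c_{\mathcal T})\|u\|_X$ for all $u\in X$. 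This also shows $|\!|\!|\cdot|\!|\!|$ is a seminorm; that it is actually a norm will follow once the reverse inequality is established (if $|\!|\!|u|\!|\!|=0$ then $\|u\|_X=0$).

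The reverse inequality is the heart of the matter, and I would prove it by contradiction using the compactness hypothesis $(i)$. Suppose no constant $C$ with $\|u\|_X\le C\,|\!|\!|u|\!|\!|$ exists. Then there is a sequence $(u_k)\subset X$ with $\|u_k\|_X=1$ for all $k$ and $|\!|\!|u_k|\!|\!|\to 0$, i.e. $\|{\mathcal P}(u_k)\|_Y\to 0$ and $\|{\mathcal T}(u_k)\|_\Upsilon\to 0$. Since ${\mathfrak C}:X\to Z$ is compact and $(u_k)$ is bounded, after passing to a subsequence (not relabeled) we may assume ${\mathfrak C}(u_k)\to z$ in $Z$ for some $z\in Z$. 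I claim $(u_k)$ is then Cauchy in $X$: by hypothesis $(ii)$ the expression $\|{\mathcal P}(\cdot)\|_Y+\|{\mathfrak C}(\cdot)\|_Z$ is a norm on $X$ equivalent to $\|\cdot\|_X$, so there is $c>0$ with
\begin{align}
\|u_k-u_m\|_X\le c\big(\|{\mathcal P}(u_k-u_m)\|_Y+\|{\mathfrak C}(u_k-u_m)\|_Z\big)\le c\big(\|{\mathcal P}(u_k)\|_Y+\|{\mathcal P}(u_m)\|_Y+\|{\mathfrak C}(u_k)-{\mathfrak C}(u_m)\|_Z\big),
\end{align}
and the right-hand side tends to $0$ as $k,m\to\infty$ because $\|{\mathcal P}(u_k)\|_Y\to 0$ and $({\mathfrak C}(u_k))$ converges hence is Cauchy. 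Since $X$ is a Banach space, $u_k\to u$ in $X$ for some $u\in X$, and $\|u\|_X=\lim_k\|u_k\|_X=1$, so $u\neq 0$. By continuity of ${\mathcal P}$ and ${\mathcal T}$ we get $\|{\mathcal P}(u)\|_Y=\lim_k\|{\mathcal P}(u_k)\|_Y=0$ and $\|{\mathcal T}(u)\|_\Upsilon=\lim_k\|{\mathcal T}(u_k)\|_\Upsilon=0$, i.e. ${\mathcal P}(u)=0$ and ${\mathcal T}(u)=0$. But hypothesis $(iii)$ says that ${\mathcal P}(u)=0$ together with $u\neq 0$ forces ${\mathcal T}(u)\neq 0$, a contradiction. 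Hence the desired constant $C$ exists, and combining the two inequalities gives $C^{-1}\|u\|_X\le |\!|\!|u|\!|\!|\le(c_{\mathcal P}+c_{\mathcal T})\|u\|_X$, so $|\!|\!|\cdot|\!|\!|$ is a norm on $X$ equivalent to $\|\cdot\|_X$.

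The main obstacle, and the place where all three hypotheses are genuinely used, is the compactness argument showing $(u_k)$ is Cauchy in $X$: hypothesis $(i)$ extracts a convergent subsequence for ${\mathfrak C}(u_k)$, hypothesis $(ii)$ converts control of ${\mathcal P}(u_k)$ and ${\mathfrak C}(u_k)$ into control of $\|u_k\|_X$, and hypothesis $(iii)$ supplies the final contradiction once the limit $u$ is identified. One should take a little care that the subsequence extraction is legitimate (the negation of the inequality genuinely yields a normalized sequence with $|\!|\!|u_k|\!|\!|\to 0$, using that $|\!|\!|\cdot|\!|\!|$ is already known to be a seminorm), but beyond that the argument is the standard Peetre–Tartar type compactness lemma and the remaining steps are routine.
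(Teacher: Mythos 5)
Your proof is correct. The paper itself gives no proof of this lemma, simply citing it as Lemma 11.1 in Tartar's book, and your argument is precisely the standard Peetre--Tartar compactness argument (normalize a putative counterexample sequence, use compactness of ${\mathfrak C}$ and hypothesis $(ii)$ to extract a limit in $X$, then contradict $(iii)$), so there is nothing to compare against and nothing to correct.
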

\begin{lemma}
\label{equiv-norm-Sobolev}
Let $\mathbb A$ satisfy conditions \eqref{Stokes-1} and \eqref{mu}. 
Then the formula
\begin{align}
\label{coercive-dlc-apend}
|\!|\!|{\bf v}|\!|\!|^2:=\|\nabla ({\bf v})\|_{L_2(\Omega_+\cup\,\Omega_-)^{n\times n}}^2
+\left|\int_{\partial \Omega }[\gamma {\bf v}]d\sigma \right|^2, \ 
\forall \, {\bf v}\in {\mathcal H}^1({\mathbb R}^n\setminus \partial \Omega )^n
\end{align}
defines a norm on the weighted Sobolev space ${\mathcal H}^1({\mathbb R}^n\setminus \partial \Omega )^n$, which is equivalent to the norm 
\begin{align}
\label{standard-weight}
\|{\bf v}\|_{{\mathcal H}^1({\mathbb R}^n\setminus \partial \Omega )^n}^2
=\|\rho ^{-1}{\bf v}\|_{L_2({\mathbb R}^n)^n}^2
+\|\nabla {\bf v}\|_{L_2(\Omega_+\cup\,\Omega_-)^{n\times n}}^2\,.
\end{align}
\end{lemma}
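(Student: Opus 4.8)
The plan is to verify the hypotheses of Lemma~\ref{Tartar-lemma} with the concrete choices
$X={\mathcal H}^1({\mathbb R}^n\setminus\partial\Omega)^n$,
$Y=L_2(\Omega_+\cup\Omega_-)^{n\times n}$,
$Z=L_2(\Omega_R)^n$ where $\Omega_R:=B_R\setminus\partial\Omega$ for a fixed ball $B_R\supset\overline{\Omega}_+$,
$\Upsilon={\mathbb R}^n$, and with the operators
$\mathcal P({\bf v}):=\nabla{\bf v}$ (acting componentwise, with values in $Y$), ${\mathfrak C}({\bf v}):={\bf v}|_{\Omega_R}$, and ${\mathcal T}({\bf v}):=\int_{\partial\Omega}[\gamma{\bf v}]\,d\sigma$. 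With these identifications the quantity $|\!|\!|\cdot|\!|\!|$ in \eqref{coercive-dlc-apend} is precisely $\big(\|\mathcal P(\cdot)\|_Y^2+\|{\mathcal T}(\cdot)\|_\Upsilon^2\big)^{1/2}$, which is equivalent to $\|\mathcal P(\cdot)\|_Y+\|{\mathcal T}(\cdot)\|_\Upsilon$, so the lemma applies verbatim once its three hypotheses are checked. Continuity of $\mathcal P$ is immediate from the definition \eqref{standard-weight} of the norm on $X$; continuity of ${\mathfrak C}$ follows because ${\mathbb R}^n$-integrability of $\rho^{-1}{\bf v}$ over $B_R$ controls the $L_2(\Omega_R)$-norm (since $\rho$ is bounded above and below on the bounded set $B_R$); and continuity of ${\mathcal T}$ follows from boundedness of the interior and exterior trace operators $\gamma_\pm$ of Lemma~\ref{trace-operator1} (extended to ${\mathcal H}^1_p(\Omega_-)$ as noted after \eqref{ext-trace}) together with boundedness of integration over the compact set $\partial\Omega$.

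The hypothesis (i), compactness of ${\mathfrak C}:X\to Z$, is the technical core. I would argue as follows: the restriction of $X$ to $\Omega_+$ embeds continuously into $H^1(\Omega_+)^n$, and the restriction to the bounded Lipschitz ``annular'' region $\Omega_-\cap B_R$ embeds continuously into $H^1(\Omega_-\cap B_R)^n$, because on that bounded region the $L_2$-norm of ${\bf v}$ is controlled by $\|\rho^{-1}{\bf v}\|_{L_2}$ while $\nabla{\bf v}\in L_2$ by definition. Hence ${\bf v}\mapsto {\bf v}|_{\Omega_R}$ factors through the continuous embedding $X\to H^1(\Omega_+)^n\times H^1(\Omega_-\cap B_R)^n$ followed by the Rellich--Kondrachov compact embedding $H^1\hookrightarrow L_2$ on each of these two bounded Lipschitz domains (the fact that they touch along $\partial\Omega$ is irrelevant since we work with the disjoint union). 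Composition of a bounded operator with a compact one is compact, giving (i).

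For hypothesis (ii) I would show that $\|\mathcal P(\cdot)\|_Y+\|{\mathfrak C}(\cdot)\|_Z=\|\nabla\,\cdot\,\|_{L_2(\Omega_+\cup\Omega_-)}+\|\,\cdot\,\|_{L_2(\Omega_R)}$ is equivalent to $\|\cdot\|_X$. The bound ``$\leq C\|\cdot\|_X$'' is clear from the observations above. For ``$\geq c\|\cdot\|_X$'' it suffices to dominate $\|\rho^{-1}{\bf v}\|_{L_2({\mathbb R}^n)}$ by the right-hand side: split ${\mathbb R}^n$ into $B_R$, where $\|\rho^{-1}{\bf v}\|_{L_2(B_R)}\le C\|{\bf v}\|_{L_2(\Omega_R)}$ since $\rho$ is bounded below there, and ${\mathbb R}^n\setminus B_R$, where $\rho^{-1}\le C|{\bf x}|^{-1}$ and one applies the Hardy-type inequality underlying the equivalence in \eqref{seminorm-R3} — more precisely, $\|\rho^{-1}{\bf v}\|_{L_2(\Omega_-)}$ is controlled by $\|\nabla{\bf v}\|_{L_2(\Omega_-)}+\|\gamma_-{\bf v}\|_{B^{1/2}_{2,2}(\partial\Omega)}$ by the equivalence of the seminorm \eqref{seminorm} with the full weighted norm on ${\mathcal H}^1(\Omega_-)$ together with the trace estimate, and the trace term is in turn dominated by $\|{\bf v}\|_{H^1(\Omega_-\cap B_R)}\le C(\|\nabla{\bf v}\|_{L_2(\Omega_-\cap B_R)}+\|{\bf v}\|_{L_2(\Omega_R)})$. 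Assembling these pieces yields (ii).

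Finally, hypothesis (iii): suppose $\mathcal P({\bf v})=\nabla{\bf v}={\bf 0}$ in $\Omega_+\cup\Omega_-$ with ${\bf v}\neq{\bf 0}$ in $X$. Then ${\bf v}$ is constant on $\Omega_+$, say ${\bf v}\equiv{\bf b}_+\in{\mathbb R}^n$, and constant on $\Omega_-$, say ${\bf v}\equiv{\bf b}_-$; but ${\bf v}|_{\Omega_-}\in{\mathcal H}^1(\Omega_-)^n\hookrightarrow L_{2n/(n-2)}(\Omega_-)^n$ by the Sobolev embedding in \eqref{Kozono-1}, which forces ${\bf b}_-={\bf 0}$. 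Since ${\bf v}\neq{\bf 0}$ we must have ${\bf b}_+\neq{\bf 0}$, and then ${\mathcal T}({\bf v})=\int_{\partial\Omega}[\gamma{\bf v}]\,d\sigma=\int_{\partial\Omega}({\bf b}_+-{\bf 0})\,d\sigma=|\partial\Omega|\,{\bf b}_+\neq{\bf 0}$. This verifies (iii), and Lemma~\ref{Tartar-lemma} then gives that $|\!|\!|\cdot|\!|\!|$ is a norm on ${\mathcal H}^1({\mathbb R}^n\setminus\partial\Omega)^n$ equivalent to $\|\cdot\|_{{\mathcal H}^1({\mathbb R}^n\setminus\partial\Omega)^n}$. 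The main obstacle is the careful bookkeeping in step (ii), especially tracing the weighted $L_2$-bound on the exterior part back to the gradient and trace through the established seminorm equivalences; the compactness in (i) and the kernel computation in (iii) are comparatively routine.
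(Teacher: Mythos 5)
Your proof is correct and follows essentially the same route as the paper: both apply Lemma~\ref{Tartar-lemma} with $P=\nabla$, ${\mathcal T}({\bf v})=\int_{\partial\Omega}[\gamma{\bf v}]\,d\sigma$, and a compact restriction operator, and both settle hypothesis (iii) by the same kernel computation. The only difference is that the paper takes $Z=L_2(\Omega_+)^n$ and disposes of hypothesis (ii) in one stroke by quoting the equivalence of the gradient seminorm with the full weighted norm on ${\mathcal H}^1(\Omega_-)^n$, whereas you take $Z=L_2(B_R\setminus\partial\Omega)^n$ and verify (ii) by a slightly more laborious (but valid) decomposition.
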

\begin{proof}
First, we note that $\|\nabla (\cdot )\|_{L_2(\Omega _{-})^{n\times n}}$ is a norm on ${\mathcal H}^1(\Omega _{-})^n$, equivalent to the norm $\|\cdot \|_{{\mathcal H}^1(\Omega _{-})^n}$, defined as in \eqref{standard-weight} with $\Omega_{-}$ in place of ${\mathbb R}^n$ and $\Omega_+\cup\,\Omega_-$ (see, e.g., \cite[Proposition 2.7]{Sa-Se} in the case $n=3$).
Therefore,
\begin{align}
\label{standard-weight-1}
\|\nabla ({\bf v})\|_{L_2(\Omega_-)^{n\times n}}+\|\nabla ({\bf v})\|_{L_2(\Omega_+)^{n\times n}}
+\|{\bf v}\|_{L_2(\Omega_+)^n}
=\|\nabla ({\bf v})\|_{L_2(\Omega_+\cup\,\Omega_-)^{n\times n}}+\|{\bf v}\|_{L_2(\Omega _+)^{n}} 
\end{align}
is a norm on the space ${\mathcal H}^1({\mathbb R}^n\setminus \partial \Omega )^n$, equivalent to the norm \eqref{standard-weight} of this space.

Now, we consider the Banach spaces 
$X:={\mathcal H}^1({\mathbb R}^n\setminus \partial \Omega )^n$, $Y=L_2(\Omega_+\cup\,\Omega_-)^{n\times n}$, $Z=L_2(\Omega _+)^{n}$ 
and 
$\Upsilon :={\mathbb R}^{n}$. 
Also let us consider the operators 
\begin{align}
&P:{\mathcal H}^1({\mathbb R}^n\setminus \partial \Omega )^n\to L_2(\Omega_+\cup\,\Omega_-)^{n\times n},\ P({\bf v})
:=\nabla {\bf v}\,,\\
&{\mathfrak C}:{\mathcal H}^1({\mathbb R}^n\setminus \partial \Omega )^n\to L_2(\Omega _+)^{n},\ {\mathfrak C}({\bf v})
:={\bf v}|_{\Omega _+},\\
&{\mathcal T}:{\mathcal H}^1({\mathbb R}^n\setminus \partial \Omega )^n\to {\mathbb R}^{n},\ {\mathcal T}({\bf v})
:=\int_{\partial \Omega }[\gamma {\bf v}]d\sigma ,
\end{align}
all of them being linear and continuous. Moreover, the operator ${\mathfrak C}$ is compact due to the compact embedding of the space $H^1(\Omega _+)^{n}$ in $L_2(\Omega _+)^{n}$, and the norm in \eqref{standard-weight-1} can be written as
\begin{align}
\label{standard-weight-2}
\|\nabla {\bf v}\|_{L_2(\Omega_+\cup\,\Omega_-)^{n\times n}} + \|{\bf v}\|_{L_2(\Omega _+)^{n}}
=\|P({\bf v})\|_{L_2(\Omega_+\cup\,\Omega_-)^{n\times n}} + \|{\mathfrak C}({\bf v})\|_{L_2(\Omega _+)^{n}}\,.
\end{align}
In addition, the operator ${\mathcal T}$ satisfies the condition ${\mathcal T}({\bf v})\neq 0$ whenever $P({\bf v})=0$ and ${\bf v}\neq {\bf 0}$. Indeed, the condition $P({\bf v})=0$ and ${\bf v}\neq {\bf 0}$ is equivalent to ${\bf v}|_{\Omega _{-}}={\bf 0}$, ${\bf v}|_{\Omega _{+}}={\bf c}\in {\mathbb R}^n$ with ${\bf c}\neq {\bf 0}$.
Assume that ${\mathcal T}({\bf v})=0$. 
Then
\begin{align}
\label{standard-weight-3}
\int_{\partial \Omega }[\gamma {\bf v}]d\sigma =0\,.
\end{align}
Since $[\gamma {\bf v}]={\bf c}$ on $\partial \Omega $, condition \eqref{standard-weight-3} implies that ${\bf c}={\bf 0}$, which contradicts the assumption ${\bf v}\neq {\bf 0}$. Thus, ${\mathcal T}({\bf v})\neq 0$ whenever $P({\bf v})=0$ and ${\bf v}\neq {\bf 0}$, as asserted.

Consequently, the conditions in Lemma \ref{Tartar-lemma} are satisfied, and hence
\begin{align}
\label{coercive-dlc-apend-3}
\|P({\bf v})\|_{Y}+\|{\mathcal T}({\bf v})\|_{\Upsilon }
=\|\nabla {\bf v}\|_{L_2({\mathbb R}^n\setminus \partial \Omega )^{n\times n}}
+ \left|\int_{\partial \Omega }[\gamma {\bf v}]d\sigma \right|
\end{align}
is a norm on ${\mathcal H}^1({\mathbb R}^n\setminus \partial \Omega )^n$ equivalent to the norm $\|\cdot \|_{{\mathcal H}^1({\mathbb R}^n\setminus \partial \Omega )^n}$. This result and the equivalence of the norms \eqref{coercive-dlc-apend} and \eqref{coercive-dlc-apend-3}  
show that \eqref{coercive-dlc-apend} is also a norm in ${\mathcal H}^1({\mathbb R}^n\setminus \partial \Omega )^n$ equivalent to the norm \eqref{standard-weight}.
\end{proof}

\section*{\bf Acknowledgements}
The research has been partially supported by the grant EP/M013545/1: "Mathematical Analysis of Boundary-Domain Integral Equations for Nonlinear PDEs" from the EPSRC, UK.
Mirela Kohr has been also partially supported by the Babe\c{s}-Bolyai University grant AGC35124/31.10.2018.

\end{document}